\newtheorem{theorem}{Theorem}[section]
\newtheorem{lem}[theorem]{Lemma}
\newtheorem{cor}[theorem]{Corollary}
\newtheorem{prop}[theorem]{Proposition}
\newtheorem{thm}[theorem]{Theorem}
\newtheorem{ex}[theorem]{Example}
\theoremstyle{definition}
\newtheorem{defn}[theorem]{Definition}
\newtheorem{rem}[theorem]{Remark}
\newcommand{\ra}{\rightarrow}
\newcommand{\Ra}{\Rightarrow}
\newcommand{\rab}{\rightsquigarrow}
\newcommand{\we}{\wedge}
\newcommand{\Fil}{\mathrm{Fil}}
\newcommand{\IFil}{\mathrm{IFil}}
\newcommand{\sq}{\square}
\newcommand{\sha}{\mathbf{sHA}}
\newcommand{\srs}{\mathbf{SRS}}
\newcommand{\shs}{\mathbf{sHL}}
\newcommand{\srl}{\mathbf{SRL}}
\newcommand{\shrl}{\mathbf{SHL}}
\newcommand{\srlbs}{\mathbf{SRLbs}}
\title{On subreducts of subresiduated lattices and logic}
\author[1, 2]{\small Jos\'e Luis Castiglioni\footnote{Corresponding author: e-mail: jlc@mate.unlp.edu.ar; tel.: +54 221 4245875}}
\author[3]{\small V\'{\i}ctor Fern\'andez}
\author[3, 2]{\small H\'ector Federico Mallea}
\author[1, 2]{\small Hern\'an Javier San Mart\'{\i}n}
\affil[1]{\footnotesize Departamento de Matem\'atica, Facultad de Ciencias Exactas (UNLP). Casilla de correos 172, La Plata (1900), Argentina.}
\affil[2]{\footnotesize CONICET-Argentina}
\affil[3]{\footnotesize Universidad Nacional de San Juan. Avenida Jos\'e I. de la Roza (Oeste) 230, San Juan (5400), Argentina.}
\date{}
\begin{document}

\maketitle

\begin{abstract}
	Subresiduated lattices were introduced during the decade of 1970 by Epstein and Horn as an algebraic counterpart of some logics with strong implication previously studied by Lewy and Hacking.  These logics are examples of subuintuitionistic logics, i.e., logics in the language of intuitionistic logic that are defined semantically by using Kripke models, in the same way as intuitionistic logic is defined, but without requiring of the models some of the properties required in the intuitionistic case. Also in relation with the study of subintuitionistic logics, Celani and Jansana get these algebras as the elements of a subvariety of that of weak Heyting algebras.
	
	Here, we study both the implicative and the implicative-infimum subreducts of subresiduated lattices. Besides, we propose a calculus whose algebraic semantics is given by these classes of algebras. Several expansions of this calculi are also studied together to some interesting properties of them.
\end{abstract}

\section{Introduction}
\label{intro}

Subresiduated lattices were introduced in \cite{EH}.
A subresiduated lattice is a pair $(L,D)$, where $L$ is a bounded distributive lattice and
$D$ is a bounded sublattice of $L$ such that for every $a,b\in L$ the set
\[
E_{ab} := \{d \in D \ | \ d \wedge a \leq b\}
\]
has maximum element. In this case we define the binary operation $\ra$ by
\[
a \rightarrow b := \max E_{ab}.
\]
Note that if $(L,D)$ is a subresiduated lattice and $D = L$ then the operation
$\ra$ is the residuum of $\we$, i.e., $(L, \ra)$ is a Heyting algebra. Moreover,
subresiduated lattices $(L,D)$ can be seen as algebras $(L,\we,\vee,\ra,0,1)$ of type
$(2,2,2,0,0)$, where $D = \{a\in A:1\ra a = a\} = \{1\ra a:a\in A\}$.
The class of subresiduated lattices forms a variety, which will be denoted by $\srl$ \cite{EH}.

It is known that the class of $\{\ra,1\}$-subreducts of Heyting algebras is the variety of
Hilbert algebras, which was studied by Diego in $\cite{D}$. It naturally arises the following question:
can we characterize the class of $\{\ra\,1\}$-subreducts of the elements in the variety $\srl$? The answer of this question is the first goal of this article. Note that this class contains as subclass that of Hilbert algebras. Since in every subresiduated lattice
the equation $x\ra x = 1$ is satisfied, the class of $\{\ra\}$-subreducts of the members of $\srl$ and the class
of the $\{\ra,1\}$-subreducts of $\srl$ are term equivalent, so in order to make
the exposition of this paper a bit simpler we will speak of $\{\ra\}$-subreducts in place of $\{\ra,1\}$-subreducts.   

We also show that the class of $\{\ra\}$-subreducts of the members of $\srl$ is a quasivariety which is not a variety and we present a quasi-equational base for it. We named sub-Hilbert algebras the elements of this quasivariety.

Another approach to subresiduated lattices was proposed in \cite{CJa}. There, these are seen as the elements ot the subvariety of weakly Heyting algebras, which are related to the algebraic counterpart of strict implication fragment of the global consequence relation of the normal modal logic K \cite{CJa}. The interest in these algebras come from the study of subintuitionistic logics, intended as those logics in the language of intuitionistic logic that are defined semantically by using Kripke models in the
same way as intuitionistic logic is, but without requiring of the models some of the properties required in the intuitionistic case \cite{CJl}.

The study of a propositional logic whose algebraic semantic is the class of sub-Hilbert algebras is also carry out. This is another goal of this article. This logic is close related with the implicational fragment of the logic R4 of \cite{EH}. Besides, we investigate several expansions of this logic within the language of intuitionistic logic.
\vskip.2cm

The structure of this article is as follows. In Section \ref{srls} we recall the definition of subresiduated lattices and the main properties of these algebras which will be needed in subsequent sections. As was already mentioned, Section \ref{subHilbert} is devoted to the study of the $\{\ra\}$-subreducts of subresiduated lattices. Using similar techniques, we study the $\{\wedge, \ra\}$-subreducts of subresiduated lattices in Section \ref{semireticulos}. Section \ref{logic} is the lengthier one. There, we study some logics associated with the algebras studied in previous sections. Through a Hilbert style system, in Subsection \ref{subHilbertLogic} we present a logic $L_1$ in the language $\{\ra\}$ which has as algebraic semantics the quasivariety studied in Section \ref{subHilbert}. Axiomatic expansions of $L_1$ with the usual intuitionistic connectives are studied in Subsection \ref{expansionsofL1}. Here, we compare our logic in the full intuitionistic signature with that introduced in \cite{EH}. Motivated by logic, we also propose an extension of the notion of subresiduated lattices, by avoiding the distributive condition on its underlying lattice structure.  Finally, in Subsection \ref{otherexpansion}, we expand the system $L_1$ with a weaker conjunction and characterize the variety of its algebraic semantics.

\section{Subresiduated lattices}
\label{srls}

As we have mentioned in the previous section, the class of subresiduated lattices forms a variety \cite{EH,CJa}.
In what follows we give an equational presentation for this variety \cite[Theorem 1]{EH}.

\begin{defn}\label{srl}
	A \textbf{subresiduated lattice} is an algebra $(A,\we,\vee,\ra,0,1)$ of type $(2,2,2,0,0)$,
	where $(A,\we,\vee,0,1)$ is a bounded distributive lattice and the following equations
	are satisfied:
	\begin{enumerate}
		\item[\textbf{(A1)}] $x\ra x= 1$,
		\item[\textbf{(A2)}] $x \ra y \leq z \ra (x \ra y)$,
		\item[\textbf{(A3)}] $x \we (x \ra y) \leq y$
		\item[\textbf{(A4)}] $z \ra (x \we y) = (z \ra x) \we (z \ra y)$,
		\item[\textbf{(A5)}] $(x \vee y) \ra z = (x \ra z) \we (y \ra z)$,
		\item[\textbf{(A6)}] $(x \ra y) \we (y \ra z) \leq (x \ra z)$.
	\end{enumerate}
\end{defn}

If $(A,\we,\vee,\ra,0,1)$ is a subresiduated lattice then
for brevity the notation $(A,\ra)$ will be used, assuming that $A$ is a bounded distributive lattice.

If $(A,D)$ is a subresiduated lattice in the sense of the definition given in the introduction
then $(A,\ra)$ is a subresiduated lattice in the sense of Definition \ref{srl}. Moreover,
$D = \{a\in A: 1\ra a = a\} = \{1\ra a:a\in A\}$.
Conversely, if $(A,\ra)$ is a subresiduated lattice
in the sense of Definition \ref{srl} and we define $D = \{a\in A:1\ra a = a\}$, then the pair
$(A,D)$ is a subresiduated lattice in the sense of the definition given in the introduction.

If $a$ is an element of a subresiduated lattice $(A,\ra)$ then we define
$\square(a): = 1\ra a$ and $\sq A = \{\square(a):a\in A\}$.

\begin{rem}
	Note that if $(A,D)$ is a subresiduated lattice then $(D,\ra)$ is a Heyting algebra which
	is a subalgebra of the subresiduated lattice $A$.
	Moreover, for every $a, b \in A$ and $d \in \sq A$ we have that $a \we d \leq b$ if and only if $d \leq a \ra b$.
	Also note that if we do not assume the condition $d \in \sq A$, in general we only have that if $d \leq a\ra b$
	then $a\we d \leq b$.
\end{rem}

The proof of the following two lemmas are straightforward.

\begin{lem}\label{BAI}
	If $(A,\ra)$ is a subresiduated lattice and $a, b \in A$ then $a \leq b$ if and only if $a \to b = 1$.
	Moreover, the following cuasi-equations are satisfied in every subresiduated lattice:
	\begin{enumerate}
		\item[\textbf{(I)}] $x \to x = 1$,
		\item[\textbf{(T)}] $x \to 1 = 1$,
		\item[\textbf{(A)}] if $x \to y = 1$ and $y \to x = 1$ then $x = y$,
		\item[\textbf{(B)}] $(x \ra y) \ra ((y \ra z) \ra (x \ra z)) = 1$.
	\end{enumerate}
\end{lem}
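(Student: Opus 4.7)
The plan is to establish the biconditional $a \leq b \iff a \to b = 1$ first, after which (I), (T), (A), and (B) reduce to short manipulations combining this characterization with the axioms and basic lattice properties.

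For the forward direction, I would specialize (A4) to $z = a$, yielding $a \to (a \we b) = (a \to a) \we (a \to b)$. Since $a \leq b$ gives $a \we b = a$ and (A1) gives $a \to a = 1$, both sides collapse to $a \to b = 1$. For the converse, I would substitute $a \to b = 1$ into (A3) to obtain $a = a \we 1 \leq b$. With the biconditional available, (I) is just (A1); (T) follows because $x \leq 1$ holds in any bounded lattice; and (A) follows from antisymmetry of the lattice order.

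For (B), which is the most substantive clause, I would first extract monotonicity of $\to$ in its second argument from (A4): if $b \leq c$ then $a \to b = a \to (b \we c) = (a \to b) \we (a \to c) \leq a \to c$. Applying this monotonicity to (A6), $(x \to y) \we (y \to z) \leq x \to z$, yields $(y \to z) \to \bigl((x \to y) \we (y \to z)\bigr) \leq (y \to z) \to (x \to z)$, and expanding the left-hand side with (A4) and (A1) simplifies it to $(y \to z) \to (x \to y)$. Finally, (A2), with its $z$ specialized to $y \to z$, gives $x \to y \leq (y \to z) \to (x \to y)$, so chaining produces $x \to y \leq (y \to z) \to (x \to z)$, which is (B) by the characterization.

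I do not foresee any serious obstacle, consistent with the authors' own description of the proof as straightforward. The only place that demands a moment of care is (B), where (A2), (A4), and (A6) must be combined in the right order; but even this is routine once the monotonicity of $\to$ in the second argument has been isolated as a lemma.
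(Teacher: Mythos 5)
Your proof is correct and fills in exactly the routine verification the paper omits (it only remarks that the proof is "straightforward"): the biconditional via (A1), (A3), (A4), and then (B) by chaining (A2), (A4)-monotonicity, and (A6). No gaps.
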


\begin{lem}\label{sq-props}
	Let $(A,\ra)$ be a subresiduated lattice. Then
	$\sq(x) \leq x$ for every $x\in A$.
\end{lem}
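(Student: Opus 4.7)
The plan is to observe that this is an immediate instance of axiom \textbf{(A3)}. Specifically, \textbf{(A3)} asserts $x \wedge (x \to y) \leq y$ for all $x, y$; instantiating $x := 1$ yields
\[
1 \wedge (1 \to y) \leq y.
\]
Since $1$ is the top of the bounded distributive lattice $(A, \wedge, \vee, 0, 1)$, we have $1 \wedge (1 \to y) = 1 \to y = \square(y)$, so $\square(y) \leq y$, which is exactly the claim (renaming the variable).

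The only thing one might want to spell out is why $1$ really is the top with respect to $\wedge$; this is part of what it means to say $(A, \wedge, \vee, 0, 1)$ is a bounded lattice in Definition \ref{srl}, so there is nothing further to verify. There is no real obstacle here—the lemma is flagged as straightforward in the excerpt precisely because it reduces to a single substitution into \textbf{(A3)}.
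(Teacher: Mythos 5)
Your proof is correct and is surely the intended argument: the paper omits the proof as ``straightforward,'' and instantiating $x:=1$ in \textbf{(A3)} to get $1\wedge(1\ra y)=1\ra y=\sq(y)\leq y$ is exactly that one-line verification.
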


\begin{rem}
	Let $(A,\ra)$ be a subresiduated lattice and $x,y\in A$.
	Since $x\ra y \in \sq A$ then
	\begin{equation}\label{sq^2}
		\sq(x \ra y) = x \ra y.
	\end{equation}
	In particular, $\sq^2 x = \sq x$ for every $x\in A$
	(this fact also follows from that $\sq x\in \sq A$ for
	every $x\in A$).
\end{rem}

\begin{lem}\label{sr1-2}
	In every subresiduated lattice the following equations are satisfied:
	\begin{enumerate}
		\item[\textbf{(S1)}] $(x \ra y) \ra (z \ra (x \ra y)) = 1$,
		\item[\textbf{(S2)}] $w \ra (x \ra (y \ra z)) \ra \left[(w \ra (x \ra y)) \ra (w \ra (x \ra z))\right] = 1$.
	\end{enumerate}
\end{lem}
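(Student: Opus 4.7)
For (S1), the claim is immediate: axiom (A2) gives $x\ra y \leq z\ra(x\ra y)$, and Lemma~\ref{BAI} converts any such inequality $a \leq b$ into the equation $a \ra b = 1$.

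For (S2), the core observation is that $w \ra (x \ra (y \ra z))$ lies in $\sq A$: by the identity $\sq(p \ra q) = p \ra q$ recorded in (\ref{sq^2}), every value of $\ra$ sits in $\sq A$. The remark following Definition~\ref{srl} then supplies the residuation-style equivalence ``$d \leq a \ra b$ iff $a \we d \leq b$'' whenever $d \in \sq A$. Invoking this equivalence, together with Lemma~\ref{BAI}, the claim (S2) reduces to the inequality
\[
(w \ra (x \ra y)) \we (w \ra (x \ra (y \ra z))) \leq w \ra (x \ra z).
\]

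To establish this inequality I would apply (A4) twice to collapse the left-hand side: first rewriting it as $w \ra [(x \ra y) \we (x \ra (y \ra z))]$, and then as $w \ra [x \ra (y \we (y \ra z))]$. The upper bound now follows from (A3), which gives $y \we (y \ra z) \leq z$, combined with the monotonicity of $\ra$ in its second argument (itself an immediate consequence of (A4)) applied inside both $x \ra (\cdot)$ and $w \ra (\cdot)$.

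The main obstacle I anticipate is resisting the impulse to argue inside a subresiduated lattice as though it were a Heyting algebra; the full residuation is available only on $\sq A$, and the general remark after Definition~\ref{srl} warns that without the hypothesis $d \in \sq A$ one only has one direction. The payoff of identifying at the start that $w \ra (x \ra (y \ra z)) \in \sq A$ is that the remainder of the argument becomes essentially Heyting-algebraic, driven by the distributivity of $\ra$ over $\we$ on the right supplied by (A4).
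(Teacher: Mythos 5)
Your proof is correct and follows essentially the same route as the paper: (S1) from \textbf{(A2)}, and (S2) by reducing to the inequality $(w \ra (x \ra (y \ra z))) \we (w \ra (x \ra y)) \leq w \ra (x \ra z)$ and collapsing the left side via two applications of \textbf{(A4)} before invoking \textbf{(A3)} and monotonicity. Your explicit justification of the reduction step via the residuation property on $\sq A$ is a welcome clarification of a point the paper leaves implicit, but it is the same argument.
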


\begin{proof}
	The equation \textbf{(S1)} is equivalent to the equation \textbf{(A2)}.
	
	The equation \textbf{(S2)} is equivalent to prove that
	\[
	(w \ra (x \ra (y \ra z))) \we (w \ra (x \ra y)) \leq w \ra (x \ra z).
	\]
	Note that
	\[
	(w \ra (x \ra (y \ra z))) \we (w \ra (x \ra y)) = w\ra ((x \ra (y \ra z))\we (x \ra y))
	\]
	and
	\[
	w\ra ((x \ra (y \ra z))\we (x \ra y)) = w\ra (x\ra (y\we (y\ra z))).
	\]
	
	Then \textbf{(S2)} is satisfied if and only if
	\begin{equation}\label{eqs2}
		w\ra (x\ra (y\we (y\ra z))) \leq w \ra (x \ra z).
	\end{equation}
	Since $y\we (y\ra z)) \leq z$ then (\ref{eqs2}) is satisfied, which was our aim.
\end{proof}

\begin{rem}\label{sobreS}
	Note that \textbf{(S1)} is also consequence of \eqref{sq^2} and the antimonotony of $\ra$ in the first coordinate, which is consequence of \textbf{(A5)}.
	Indeed, since $z \leq 1$ then $1 \ra (x \ra y) \leq z \ra (x \ra y)$.
	Taking into account that $1 \ra (x \ra y) = \sq(x \ra y) = x \ra y$ we obtain \textbf{(S1)}.
	
	Besides, note that taking $w = 1$ in \textbf{(S2)} we have that	
	\[
	1 \ra (x \ra (y \ra z)) \ra \left[(1 \ra (x \ra y)) \ra (1 \ra (x \ra z))\right] = 1.
	\]
	Thus, by \eqref{sq^2} we deduce
	\begin{enumerate}
		\item[\textbf{(S)}] $(x \ra (y \ra z)) \ra \left[(x \ra y) \ra (x \ra z)\right] = 1$.
	\end{enumerate}
	
	On the other hand, assuming \textbf{(S)} we have that
	\[
	x \ra (y \ra z) \leq \left[(x \ra y) \ra (x \ra z)\right].
	\]
	By the monotony of $\ra$ in the second coordinate, which follows from \textbf{(A4)}, we get
	\[
	w \ra (x \ra (y \ra z)) \leq w \ra \left[(x \ra y) \ra (x \ra z)\right].
	\]
	Applying again \textbf{(S)},
	\[
	w \ra \left[(x \ra y) \ra (x \ra z)\right] \leq \left[ w \ra (x \ra y)\right] \ra \left[ w \ra(x \ra z)\right],
	\]
	from where, by transitivity, it follows that
	\[
	w \ra (x \ra (y \ra z)) \leq \left[ w \ra (x \ra y)\right] \ra \left[ w \ra(x \ra z)\right],
	\]
	i.e., \textbf{(S2)}.
\end{rem}

\section{On the $\{\ra\}$-subreducts of subresiduated lattices}
\label{subHilbert}

We start this section proving that the class of $\{\ra\}$-subreducts\footnote{In this paper we take as subreduct of an algebra $A$, any isomorphic image of a subalgebra of the corresponding reduct of $A$. With this definition, the class of subreducts of a class of algebras is always closed by subalgebras and isomorphic images.}
of subresiduated lattices do not form a variety.

\begin{lem}\label{noesvariedad}
	The class of $\{\ra\}$-subreducts of subresiduated lattices\footnote{Note that any $\{\ra\}$-subreduct of a subresiduated lattice always contains the constant $1 = x \ra x$, and hence may be seen as a $\{\ra, 1\}$-subreduct of it; i.e., as an algebra of type (2,0).} is not a variety.
\end{lem}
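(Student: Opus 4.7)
The plan is to exhibit a homomorphic image of some $\{\ra\}$-subreduct of a subresiduated lattice which itself is not a $\{\ra\}$-subreduct of any member of $\srl$. By the convention recorded in the footnote, the class is already closed under isomorphic images and subalgebras, so ruling out closure under homomorphic images suffices. The obstruction will be the quasi-equation \textbf{(A)} of Lemma~\ref{BAI}: if $x\ra y = 1$ and $y\ra x = 1$ then $x = y$. This transfers to every $\{\ra\}$-subreduct of an element of $\srl$, since both hypotheses are evaluated in the ambient subresiduated lattice and the resulting equality is inherited by the subalgebra. It is, however, genuinely quasi-equational and need not survive passage to a quotient.

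Concretely, I would take the three-element chain $A = \{0 < a < 1\}$ with distinguished sublattice $D = \{0, 1\}$ and verify that $(A, D)$ is a subresiduated lattice by direct computation of $\ra = \max E_{\cdot,\cdot}$. The key features of the resulting table are $1 \ra a = 0$ and $a \ra 0 = 0$; in particular $1$ does not behave as the top with respect to $\ra$. I then propose the partition with blocks $\{0, 1\}$ and $\{a\}$ and verify, by a short case analysis generated by the pair $(0, 1)$, that the induced equivalence is a congruence of the $\{\ra\}$-reduct. In the resulting two-element quotient every entry of the $\ra$-table equals the single class $[0] = [1]$, so $[0] \ra [a] = [a] \ra [0] = 1$ while $[0] \neq [a]$, which violates \textbf{(A)} and completes the argument.

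The main obstacle is picking the right example: the $\{\ra\}$-reduct of any Heyting algebra is a Hilbert algebra, in which \textbf{(A)} is already a theorem and so is preserved by quotients. One must therefore start with a subresiduated lattice that is not a Heyting algebra, and the three-element chain with $D = \{0, 1\}$ is the smallest such instance. Once it is in hand, the congruence that collapses $0$ and $1$ but keeps $a$ separate is essentially forced; checking the substitution property is routine, and the failure of \textbf{(A)} in the quotient is then immediate.
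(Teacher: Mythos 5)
Your proposal is correct and is essentially the paper's own argument: the paper uses the same three-element chain with $D=\{0,1\}$ and exhibits the surjection onto a two-element algebra with constant $\ra$-table (i.e.\ precisely the quotient by your congruence with blocks $\{0,1\}$ and $\{a\}$), then observes that the image violates the quasi-equation \textbf{(A)}. The only difference is presentational — you describe the quotient via the congruence, the paper via an explicit homomorphism onto a concretely given two-element algebra.
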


\begin{proof}
	Consider the $\{\ra\}$-reduct of the subresiduated lattice $(L, D)$, where $L$
	is the chain of three elements $0 < m < 1$ and $D = \{0, 1\}$.
	Let $(A, \ra, a)$ be the algebra of type $(2,0)$ with universe the
	set of two elements $A = \{a, b\}$
	such that for every $x, y \in A$, $x \ra y = a$.
	Let $f : L \to A$ be the map given by $f(0) = f(1) = a$ and $f(m) = b$.
	It is immediate that $f$ is an homomorphism of algebras of type $(2, 0)$
	and that $A$ is not a $\{\ra\}$-subreduct of a subresiduated lattice.
	Indeed, $a \neq b$ although $a \ra b = b \ra a = f(1)$.
	Thus, the class of $\{\ra\}$-subreducts of subresiduated lattices
	is not closed by homomorphic images.
\end{proof}

We write $\mathcal{K}$ for the class of $\{\ra\}$-subreducts of subresiduated lattices.
The aim of this section is to show that $\mathcal{K}$ is a quasi-variety.
In order to prove it we introduce in what follows a quasi-variety which contains $\mathcal{K}$. Then, we see that this quasi-variety is contained in $\mathcal{K}$.

\begin{defn}\label{sha}
	A \textbf{sub-Hilbert algebra} is an algebra $(A, \ra, 1)$ of type (2,0) which satisfies the following quasi-equations:
	\begin{enumerate}
		\item[\textbf{(B)}] $(x \ra y) \ra ((y \ra z) \ra (x \ra z)) = 1$,
		\item[\textbf{(I)}] $x \to x = 1$,
		\item[\textbf{(T)}] $x \to 1 = 1$,
		\item[\textbf{(A)}] if $x \to y = 1$ and $y \to x = 1$ then $x = y$,
		\item[\textbf{(S)}] $(x \ra (y \ra z)) \ra ((x \ra y) \ra (x \ra z)) = 1$.
	\end{enumerate}
\end{defn}

In what follows we write $\sha$ to indicate the class of sub-Hilbert algebras.
As in the case of subresiduated lattices, if $(A,\ra,1)$ is a sub-Hilbert algebra
and $a\in A$, we define $\sq a = 1\ra a$ and $\sq A = \{\sq a \ | \ a \in A\}$.
\vskip.2cm

Note that in every algebra $(A,\ra,1)$ of type $(2,0)$ which verifies the cuasi-equations
$\textbf{(B)}$, $\textbf{(I)}$, $\textbf{(A)}$ and $\textbf{(T)}$ of the Definition \ref{sha}, the relation $\leq$, given by $a\leq b$ if and only if $a\ra b = 1$, is an order. This order will be referred in what follows as ``the natural order given by the implication''. Moreover, $1$ is the last element with respect to this order.
\vskip.2cm

The following properties of monotony and antimonotony of the operation $\ra$ in the algebras of $\sha$ will be useful later.

\begin{lem}\label{monotonia}
	Let $A \in \sha$ and $a, b, c \in A$. Then
	\begin{enumerate}
		\item[M1.] If $a \leq b$ then $b \ra c \leq a \ra c$,
		\item[M2.] If $a \leq b$ then $c \ra a \leq c \ra b$.
	\end{enumerate}
\end{lem}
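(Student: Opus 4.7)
The plan is to reduce both M1 and M2 to a form of modus ponens available in any sub-Hilbert algebra, and then to invoke (B) for M1 and (S) for M2.

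First I would isolate the following ``modus ponens'' lemma inside any $A\in\sha$: if $x=1$ and $x\ra y = 1$, then $y=1$. Indeed, $x=1$ and $x\ra y=1$ give $1\ra y = 1$; combined with $y\ra 1 = 1$ from \textbf{(T)}, antisymmetry \textbf{(A)} yields $y=1$. Equivalently, $1$ is the top element of the natural order (something already observed just before the lemma), so any element $z$ with $1\leq z$ must equal $1$. This is the only ``non-routine'' step in the proof, and it is the key observation that makes everything else a one-line substitution.

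For M1, suppose $a\leq b$, i.e.\ $a\ra b = 1$. Instantiate \textbf{(B)} at $(a,b,c)$ to get
\[
(a\ra b)\ra ((b\ra c)\ra (a\ra c)) = 1,
\]
which reads $1\ra((b\ra c)\ra(a\ra c))=1$. Modus ponens then yields $(b\ra c)\ra (a\ra c)=1$, i.e.\ $b\ra c \leq a\ra c$.

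For M2, suppose again $a\leq b$, so $a\ra b = 1$. Instantiate \textbf{(S)} at $(c,a,b)$ to get
\[
(c\ra (a\ra b))\ra ((c\ra a)\ra (c\ra b)) = 1.
\]
Using $a\ra b = 1$ and then \textbf{(T)}, the antecedent collapses: $c\ra (a\ra b) = c\ra 1 = 1$. So the displayed equation becomes $1\ra ((c\ra a)\ra (c\ra b)) = 1$, and modus ponens delivers $(c\ra a)\ra (c\ra b) = 1$, i.e.\ $c\ra a\leq c\ra b$.

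I do not expect any real obstacle here: the only subtlety is recognising that the modus-ponens step is available purely from \textbf{(A)} and \textbf{(T)} (without needing $\sq x\leq x$, which was proved in the subresiduated setting and has not yet been established for sub-Hilbert algebras in the paper). Once that is noted, M1 is exactly \textbf{(B)} with $a\ra b$ replaced by $1$, and M2 is exactly \textbf{(S)} with $c\ra(a\ra b)$ replaced by $1$ via \textbf{(T)}.
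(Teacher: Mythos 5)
Your proof is correct and follows essentially the same route as the paper's: both reduce M1 to \textbf{(B)} and M2 to \textbf{(S)} after collapsing the antecedent to $1$ via \textbf{(T)}, and both extract the conclusion from $1\ra z=1$ using \textbf{(T)} and \textbf{(A)}. The only cosmetic difference is that you package that last step as a standalone ``modus ponens'' observation, whereas the paper carries it out inline in each case.
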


\begin{proof}
	First, we show $M1$. Suppose that $a \leq b$. Then
	\begin{equation}\label{e1}
		a \ra b = 1.
	\end{equation}
	It follows from \textbf{(B)} that
	\begin{equation}\label{e2}
		(a \ra b) \ra ((b \ra c) \ra (a \ra c)) = 1.
	\end{equation}
	It follows from \eqref{e1} and \eqref{e2} that
	\begin{equation}\label{e2-1}
		1 \ra ((b \ra c) \ra (a \ra c)) = 1.
	\end{equation}
	By \textbf{(T)} we have that
	\begin{equation}\label{e2-2}
		((b \ra c) \ra (a \ra c)) \ra 1 = 1.
	\end{equation}
	Thus, by (\ref{e2-1}), (\ref{e2-2}) and \textbf{(A)} we get $b \ra c \leq a \ra c$.
	
	Finally we will see $M2$. Suppose that $a\leq b$, i.e., $a\ra b = 1$.
	It follows from \textbf{(S)} that
	$(c \ra (a\ra b)) \ra ((c\ra a)\ra (c\ra b)) = 1$.
	By \textbf{(T)}, $c \ra (a\ra b) = c\ra 1 = 1$, so
	$1\ra ((c\ra a)\ra (c\ra b)) = 1$.
	On the other hand, using  \textbf{(T)} again, we get $((c\ra a)\ra (c\ra b)) \ra 1 = 1$.
	Therefore, it follows from \textbf{(A)} that $(c\ra a)\ra (c\ra b) = 1$,
	i.e., $c\ra a \leq c\ra b$.
\end{proof}

\begin{lem}\label{saleA2}
	Let $A \in \sha$ and $a, b, c \in A$.
	Then $a \ra b \leq c \ra (a \ra b)$, which is $\textbf{(A2)}$ of Definition \ref{srl}.
\end{lem}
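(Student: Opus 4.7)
My plan is to prove the desired inequality in two steps by factoring it through the intermediate element $1 \ra (a \ra b)$, i.e.\ to establish
\[
a \ra b \leq 1 \ra (a \ra b) \leq c \ra (a \ra b),
\]
and then conclude by transitivity of the natural order given by the implication (which is indeed an order on any member of $\sha$, as recorded in the paragraph following Definition \ref{sha}).

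For the right-hand inequality I would observe that $c \leq 1$, since \textbf{(T)} gives $c \ra 1 = 1$. Then \textbf{M1} of Lemma \ref{monotonia} (antimonotony of $\ra$ in its first coordinate) applied to $c \leq 1$ immediately yields $1 \ra (a \ra b) \leq c \ra (a \ra b)$.

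The crucial step is the left-hand inequality $a \ra b \leq 1 \ra (a \ra b)$. For this I would instantiate \textbf{(B)} with $x := a$, $y := b$, $z := b$, obtaining
\[
(a \ra b) \ra \bigl((b \ra b) \ra (a \ra b)\bigr) = 1.
\]
By \textbf{(I)} we have $b \ra b = 1$, so this identity reduces to $(a \ra b) \ra (1 \ra (a \ra b)) = 1$, which by the definition of the natural order is exactly $a \ra b \leq 1 \ra (a \ra b)$.

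The only real obstacle is guessing the correct instantiation of \textbf{(B)} in the crucial step: the key is choosing $z = b$ so that $b \ra b$ collapses to $1$ via \textbf{(I)}, leaving $1 \ra (a \ra b)$ on the right. Once this substitution is noticed, everything else is a routine application of the order properties and monotony facts already at hand.
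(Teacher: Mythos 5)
Your proof is correct and follows essentially the same route as the paper's: both establish $a \ra b \leq 1 \ra (a \ra b)$ via \textbf{(B)} instantiated so that $(b \ra b)$ collapses to $1$ by \textbf{(I)}, and $1 \ra (a \ra b) \leq c \ra (a \ra b)$ via $c \leq 1$ and M1 of Lemma \ref{monotonia}, then conclude by transitivity. No gaps.
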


\begin{proof}
	Let $a, b, c \in A$. Since $c\leq 1$ then
	it follows from Lemma \ref{monotonia} that
	\begin{equation}\label{aux1}
		1 \ra (a \ra b) \leq c \ra (a \ra b).
	\end{equation}
	Thus, by $\textbf{(B)}$ and $\textbf{(I)}$,
	\begin{equation}\label{aux2}
		a \ra b \leq (b \ra b) \ra (a \ra b) = 1 \ra (a \ra b).
	\end{equation}
	Hence, taking into account \eqref{aux1} and \eqref{aux2} we get
	$a \ra b \leq c \ra (a \ra b)$.	
\end{proof}

As in $\srl$, we define $\Box x := 1 \ra x$, and $\Box A := \{\Box x: x \in A\}$.

\begin{lem}\label{K}
	Let $A \in \sha$ and $x \in A$. Then $\sq x \leq x$.
\end{lem}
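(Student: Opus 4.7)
The goal is to derive the equation $(1 \ra x) \ra x = 1$. My plan is to combine a single instance of \textbf{(S)} with a simple $1$-cancellation principle, both relying only on the axioms already available in $\sha$.

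First, I would isolate the following useful observation: if $1 \ra w = 1$ holds in any $A \in \sha$, then \textbf{(T)} gives $w \ra 1 = 1$ as well, and \textbf{(A)} then forces $w = 1$. In other words, a leading $1 \ra$ can always be stripped from an equation of the form $1 \ra (\cdots) = 1$, and this can be iterated.

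Second, I would instantiate \textbf{(S)}, namely $(u \ra (v \ra w)) \ra ((u \ra v) \ra (u \ra w)) = 1$, with $u := \sq x = 1 \ra x$, $v := 1$ and $w := x$. Under this substitution, the outer antecedent $(1 \ra x) \ra (1 \ra x)$ collapses to $1$ by \textbf{(I)}, and, inside the consequent, the antecedent $(1 \ra x) \ra 1$ collapses to $1$ by \textbf{(T)}. Thus the whole instance of \textbf{(S)} reduces to $1 \ra \bigl(1 \ra ((1 \ra x) \ra x)\bigr) = 1$. Applying the $1$-cancellation principle twice to this equation yields $(1 \ra x) \ra x = 1$, i.e., $\sq x \leq x$.

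The only non-routine step is guessing the right instantiation of \textbf{(S)}; once one notices that the choice $u = \sq x$, $v = 1$ forces both antecedents to trivialize by \textbf{(I)} and \textbf{(T)}, the rest is pure bookkeeping. I do not anticipate any real obstacle.
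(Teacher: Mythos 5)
Your proof is correct and takes essentially the same route as the paper: both instantiate \textbf{(S)} at $u=\sq x$, $v=1$, $w=x$, collapse the two antecedents via \textbf{(I)} and \textbf{(T)}, and then strip the remaining leading $1\ra$'s using \textbf{(T)} together with \textbf{(A)}. The only difference is cosmetic, namely that the paper phrases the last steps in terms of the natural order $\leq$ rather than as an explicit ``$1$-cancellation'' principle.
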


\begin{proof}
	Let $x\in A$. It follows from (\textbf{S}) that
	\[
	\sq x \ra (1 \ra x) \leq (\sq x \ra 1) \ra (\sq x \ra x).
	\]
	Since $\sq x \ra (1 \ra x) = \sq x \ra \sq x = 1$ then $\sq x \ra 1 \leq \sq x \ra x$.
	But $\sq x \ra 1 = 1$, so $\sq x \ra x = 1$, i.e., $\sq x \leq x$.
\end{proof}

Note that it follows from Lemma \ref{saleA2} that in every sub-Hilbert algebra
the condition
\begin{equation}\label{desig1desq}
	x \ra y \leq \sq(x \ra y)
\end{equation}
is satisfied. This fact and Lemma \ref{K} imply that in every sub-Hilbert algebra
the following equation is satisfied:
\begin{equation}\label{igualdaddesq}
	x \ra y = \sq(x \ra y).
\end{equation}

Let $(A, \ra, 1) \in \sha$. In this algebra the condition
$\textbf{(S1)}$ is satisfied (see Remark \ref{sobreS}).
By considering $z = x = 1$ in $\textbf{(S1)}$ we have that
\begin{equation}\label{aux3}
	\sq y \leq \sq^2 y.
\end{equation}
Thus, it follows from Lemma \ref{K} and equation \eqref{aux3}
that for every $y \in A$, $\sq^2 y = \sq y$.

As an immediate consequence, we get  another characterization of $\Box A$: $$\Box A = \{x: \Box x = x\}.$$

\begin{rem}\label{sqAesHilbert}
	It follows from \cite[Definition 1]{D} that every algebra $(A, \ra, 1)$ of type (2,0)
	which satisfies $\textbf{(A)}$, $\textbf{(S)}$ and $\textbf{(h1)}$ is a Hilbert algebra,
	where
	\vskip.2cm
	\noindent $\textbf{(h1)}$ $x \ra (y \ra x) = 1$.
\end{rem}

It follows from equation \eqref{igualdaddesq} that the set $\sq A$ is closed by $\ra$.
Since $1 \in \sq A$ then $(\sq A, \ra, 1)$ is a subalgebra of the sub-Hilbert algebra $(A, \ra, 1)$.
Besides, taking into account $\textbf{(S1)}$, we have that
\[
1 \ra x \leq \sq y \ra (1 \ra x),
\]
i.e., $\sq x \ra (\sq y \ra \sq x) = 1$, which is exactly the equation $\textbf{(h1)}$
over the elements of $\sq A$. Hence, $(\sq A, \ra, 1)$ is a Hilbert algebra.

We have proved the following result.

\begin{lem}\label{sqAesHil}
	Let $(A, \ra, 1) \in \sha$. Then $\sq A$ is the universe
	of a subalgebra of $A$, which is a Hilbert algebra.
\end{lem}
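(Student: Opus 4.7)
The plan is to invoke Remark~\ref{sqAesHilbert}, which reduces the task of showing $(\sq A, \ra, 1)$ is a Hilbert algebra to two jobs: establishing that $\sq A$ is a subuniverse of $A$ containing $1$, and verifying that the three axioms $\textbf{(A)}$, $\textbf{(S)}$, $\textbf{(h1)}$ hold on $\sq A$.

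For the subalgebra part I would rely on equation~\eqref{igualdaddesq}, which states $u \ra v = \sq(u \ra v)$ for all $u, v \in A$. This immediately gives $u \ra v \in \sq A$ whenever $u, v \in A$, and in particular whenever $u, v \in \sq A$, so closure under $\ra$ is free. Since $\textbf{(I)}$ yields $1 \ra 1 = 1$, we also have $1 = \sq 1 \in \sq A$. Hence $(\sq A, \ra, 1)$ is a subalgebra of $(A, \ra, 1)$.

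The axioms $\textbf{(A)}$ and $\textbf{(S)}$ are part of the definition of $\sha$, so they hold globally in $A$ and are inherited by the subalgebra $\sq A$. The genuine verification is $\textbf{(h1)}$: for $x, y \in \sq A$, one must check $x \ra (y \ra x) = 1$. For this I would invoke $\textbf{(S1)}$, which is available in every member of $\sha$ by Remark~\ref{sobreS}. Writing $\textbf{(S1)}$ in the form $a \ra b \leq c \ra (a \ra b)$ and specializing to $a = 1$, $b = x$, $c = y$, I obtain $\sq x \leq y \ra \sq x$. Because $x \in \sq A$ gives $\sq x = x$, this collapses to $x \leq y \ra x$, i.e., $x \ra (y \ra x) = 1$.

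Each step is a direct consequence of material already established, so I do not foresee a real obstacle; the only subtlety is choosing the instantiation of $\textbf{(S1)}$ so that the resulting inequality reduces to $\textbf{(h1)}$ once one uses the identity $\sq x = x$ characterizing elements of $\sq A$.
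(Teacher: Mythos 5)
Your proof is correct and follows essentially the same route as the paper: closure of $\sq A$ under $\ra$ via equation \eqref{igualdaddesq}, membership of $1$, and verification of \textbf{(h1)} on $\sq A$ by instantiating \textbf{(S1)} and using $\sq x = x$ for $x \in \sq A$, with Remark \ref{sqAesHilbert} supplying the Hilbert-algebra criterion. No issues.
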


\begin{cor}\label{sqpropiedades}
	Let $A \in \sha$ and $x, y, z \in A$. Then,
	\begin{enumerate}
		\item $\sq x \ra (\sq y \ra \sq z) = (\sq x \ra \sq y) \ra (\sq x \ra \sq z)$,
		\item $(\sq x \ra (\sq y \ra z)) \ra (\sq y \ra (\sq x \ra z)) = 1$.
	\end{enumerate}
\end{cor}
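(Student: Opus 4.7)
The plan is to exploit Lemma \ref{sqAesHil}, which tells us that $\sq A$ is a Hilbert algebra, together with the fact that by equation \eqref{igualdaddesq} every implication $a \ra b$ already lies in $\sq A$. In both identities, the inner arguments either belong to $\sq A$ from the start or have the form of an implication, so axiom $\textbf{(S)}$, the Hilbert axiom $\textbf{(h1)}$ applied inside $\sq A$, the monotonicity properties of Lemma \ref{monotonia}, and the antisymmetry $\textbf{(A)}$ are the natural tools.

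For part (2), I would first apply $\textbf{(S)}$ to obtain
\[
\sq x \ra (\sq y \ra z) \leq (\sq x \ra \sq y) \ra (\sq x \ra z).
\]
Since $\sq x, \sq y \in \sq A$, axiom $\textbf{(h1)}$ in $\sq A$ gives $\sq y \leq \sq x \ra \sq y$, and antimonotony of $\ra$ in the first coordinate (item M1 of Lemma \ref{monotonia}) yields
\[
(\sq x \ra \sq y) \ra (\sq x \ra z) \leq \sq y \ra (\sq x \ra z).
\]
Concatenating the two inequalities gives $\sq x \ra (\sq y \ra z) \leq \sq y \ra (\sq x \ra z)$, which is exactly (2).

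For part (1), the inequality $\sq x \ra (\sq y \ra \sq z) \leq (\sq x \ra \sq y) \ra (\sq x \ra \sq z)$ is a direct instance of $\textbf{(S)}$. For the reverse, I would invoke the just-proved statement (2) with $\sq x \ra \sq y$ (which lies in $\sq A$ by \eqref{igualdaddesq}) in place of $\sq x$, with $\sq x$ in place of $\sq y$, and with $\sq z$ in place of $z$, obtaining
\[
(\sq x \ra \sq y) \ra (\sq x \ra \sq z) \leq \sq x \ra ((\sq x \ra \sq y) \ra \sq z).
\]
From $\sq y \leq \sq x \ra \sq y$, item M1 gives $(\sq x \ra \sq y) \ra \sq z \leq \sq y \ra \sq z$, and M2 then lifts this to $\sq x \ra ((\sq x \ra \sq y) \ra \sq z) \leq \sq x \ra (\sq y \ra \sq z)$. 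Chaining and applying $\textbf{(A)}$ delivers the equality.

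The only real obstacle is bookkeeping: one must check at each use of $\textbf{(h1)}$ that the relevant elements sit in $\sq A$, and keep track of which coordinate of $\ra$ the monotonicity is applied to. Once these substitutions are set up carefully, both claims reduce to routine Hilbert-algebra manipulations inside $\sq A$.
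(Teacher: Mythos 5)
Your proof is correct, but it takes a genuinely different route from the paper's. For item 2 the paper works inside the Hilbert algebra $\sq A$: it uses the exchange law $\sq x \ra (\sq y \ra \sq z) = \sq y \ra (\sq x \ra \sq z)$ (valid in any Hilbert algebra) and then proves the auxiliary identity $\sq y \ra \sq z = \sq y \ra z$ in order to replace the boxed last argument by an arbitrary one; your argument instead derives the inequality directly from one instance of $\textbf{(S)}$, the fact $\sq y \leq \sq x \ra \sq y$ coming from $\textbf{(h1)}$ on $\sq A$, and antimonotony M1 — no unboxing lemma needed, and only the one-sided inequality (which is all the statement asks for) is produced. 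For item 1 the paper simply observes that all the terms live in the Hilbert algebra $\sq A$ and invokes the standard Hilbert-algebra identity $a \ra (b \ra c) = (a \ra b) \ra (a \ra c)$, whereas you rebuild both inequalities by hand: $\textbf{(S)}$ for one direction, and for the other a clever instantiation of your item 2 at the boxed element $\sq x \ra \sq y$ followed by M1, M2 and $\textbf{(A)}$. I checked that instantiation: since $\sq x \ra \sq y \in \sq A$ by \eqref{igualdaddesq} it is a legitimate substitution, and there is no circularity because your item 2 does not use item 1. The trade-off is that the paper's proof is shorter by delegating to known Hilbert-algebra facts (Diego), while yours is more self-contained, deriving everything from the axioms of $\sha$ and the monotonicity lemmas; your proof of item 1 is correspondingly longer than the paper's one-line reduction.
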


\begin{proof}
	We only need to show 2. Since $\sq A$ is a Hilbert algebra we have that for every $x,y,z\in A$,
	\begin{equation}\label{paux4}
		\sq x \ra (\sq y \ra \sq z) = \sq y \ra (\sq x \ra \sq z).
	\end{equation}	
	Besides, for every $y, z \in A$ we have, by monotony of $\ra$ in the first coordinate and the inequality $\sq z  \leq z$,
	that $\sq y \ra \sq z \leq \sq y \ra z$.
	We also have that
	\[
	\sq y \ra z = \sq (\sq y \ra z) = 1 \ra (\sq y \ra z) \leq \sq^2 y \ra \sq z = \sq y \ra \sq z.
	\]
	Then,
	\begin{equation}\label{aux4}
		\sq y \ra \sq z = \sq y \ra z.
	\end{equation}
	Therefore, it follows from  \eqref{aux4} that
	\[
	\sq x \ra (\sq y \ra \sq z) = \sq x \ra (\sq y \ra z),
	\]
	Interchanging the roles of $x$ and $y$ in previous computations we get the following equality,
	\[
	\sq y \ra (\sq x \ra \sq z) = \sq y \ra (\sq x \ra z).
	\]
	Using these equalities in \eqref{paux4}, we get
	\[
	\sq x \ra (\sq y \ra  z)  = \sq y \ra (\sq x \ra z),
	\]
	which was our aim.
\end{proof}	

\begin{prop}\label{KessHL}
	$\mathcal{K} \subseteq \sha$.
\end{prop}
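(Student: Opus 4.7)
The plan is to reduce the inclusion to a preservation argument: since every defining condition of $\sha$ is a universal Horn formula (either an equation or a quasi-equation) in the signature $\{\ra, 1\}$, and such formulas are preserved under subalgebras and isomorphic images, it suffices to verify that each one holds in every subresiduated lattice viewed as an algebra of type $(2,0)$ obtained by forgetting $\we,\vee,0$.

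Concretely, given $A \in \mathcal{K}$, by definition $A$ is isomorphic to a subalgebra of the $\{\ra\}$-reduct of some subresiduated lattice $(L,\ra)$; since $1 = x \ra x$ by \textbf{(A1)}, this subalgebra automatically contains the constant $1$, so $A$ can be treated as an algebra of type $(2,0)$. The only remaining task is to confirm that $(L,\ra,1)$ satisfies each of the five quasi-equations listed in Definition \ref{sha}.

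For this, I would simply cite the material already developed in Section \ref{srls}. The conditions \textbf{(I)}, \textbf{(T)}, \textbf{(A)} and \textbf{(B)} hold in every subresiduated lattice by Lemma \ref{BAI}. The remaining condition \textbf{(S)} is exactly the equation derived at the end of Remark \ref{sobreS}, obtained as a consequence of \textbf{(S2)} from Lemma \ref{sr1-2} by instantiating $w = 1$ and applying \eqref{sq^2}. Thus every subresiduated lattice satisfies \textbf{(B)}, \textbf{(I)}, \textbf{(T)}, \textbf{(A)}, \textbf{(S)} in the signature $\{\ra, 1\}$.

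Finally, preservation under subalgebras and isomorphic images transfers these quasi-equations from $(L,\ra,1)$ to $A$, yielding $A \in \sha$. There is no real obstacle here; the argument is essentially bookkeeping, and the only thing worth emphasizing in the write-up is the explicit appeal to Remark \ref{sobreS} for \textbf{(S)}, since this is the one axiom of $\sha$ that is not literally listed as a property of subresiduated lattices in Lemma \ref{BAI}.
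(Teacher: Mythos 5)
Your proposal is correct and follows essentially the same route as the paper, whose entire proof is a citation of Lemma \ref{BAI} (for \textbf{(I)}, \textbf{(T)}, \textbf{(A)}, \textbf{(B)}) and Remark \ref{sobreS} (for \textbf{(S)}); you merely make explicit the routine preservation of quasi-equations under subalgebras and isomorphic images, which the paper leaves implicit.
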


\begin{proof}
	It follows from Lemma \ref{BAI} and Remark \ref{sobreS}.
\end{proof}

In the rest of this section we will give some results in order to show
that $\sha \subseteq$ $\mathcal{K}$, which will imply that
$\sha = \mathcal{K}$. In order to make it possible we will
adapt some arguments used in \cite{CFMSM}.
\vskip.2cm

\begin{defn}
	Let $(X,\leq)$ be a poset. A subset $U$ of $X$ is said to be an upset if for every $x,y\in X$,
	if $x\leq y$ and $x\in U$ then $y\in U$. We write $X^+$ for the complete lattice of upsets of $X$.
\end{defn}

In what follows we adapt the definition of implicative filter for sub-Hilbert algebras.

\begin{defn}\label{ifildef}
	Let $(A, \ra)$ be a subresiduated lattice and $F \subseteq A$.
	We say that $F$ is an \textbf{implicative filter} of $A$ if the following
	conditions are satisfied:
	\begin{enumerate}
		\item $1 \in F$.
		\item For every $a,b\in A$, if $a, a \ra b \in F$ then $b \in F$.
	\end{enumerate}
\end{defn}

\begin{rem}\label{filesifil}
	Note that in subresiduated lattices we have that every filter is an implicative filter.
	In particular, every principal filter is an implicative filter.
\end{rem}

We have that the set of implicative filters of a sub-Hilbert algebra $A$ form a complete lattice, which will be denoted by $\IFil(A)$.
We define the map $j_{A} : A \ra \IFil(A)^+$ by
\[
j_{A}(a) := \{F\in \IFil(A) \ | \ a\in F\}.
\]
If there is not ambiguity we write $j$ in place of $j_A$.

\begin{lem}\label{il1}
	Let $A\in \sha$. Then $j$ is an injective map.
\end{lem}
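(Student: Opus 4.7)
The plan is to show that for any two distinct elements $a, b \in A$, there is an implicative filter containing one but not the other, and the natural candidate is the principal upset $\uparrow a = \{x \in A : a \ra x = 1\}$. If $\uparrow a$ is an implicative filter whenever $a \in A$, then $\uparrow a \in j(a)$; so $j(a) = j(b)$ would force $b \in \uparrow a$, i.e., $a \leq b$, and symmetrically $b \leq a$, whence $a = b$ by \textbf{(A)}. Thus the task reduces to verifying that each $\uparrow a$ is an implicative filter of $A$.

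The first required condition, $1 \in \uparrow a$, is immediate from \textbf{(T)}. For the closure under modus ponens, suppose $x \in \uparrow a$ and $x \ra y \in \uparrow a$, that is, $a \ra x = 1$ and $a \ra (x \ra y) = 1$; the goal is $a \ra y = 1$. I would instantiate \textbf{(S)} with $x := a$, $y := x$, $z := y$ to obtain
\[
(a \ra (x \ra y)) \ra ((a \ra x) \ra (a \ra y)) = 1,
\]
and then use the first hypothesis to collapse this to $1 \ra ((a \ra x) \ra (a \ra y)) = 1$.

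The main subtlety is that in a sub-Hilbert algebra one does not have $1 \ra z = z$ in general, so the conclusion must be extracted through a small auxiliary observation: whenever $1 \ra z = 1$, one has $z = 1$. This follows by combining \textbf{(T)}, which gives $z \ra 1 = 1$, with \textbf{(A)} applied to the pair $(1, z)$. Applying this auxiliary fact twice, first to collapse $1 \ra ((a \ra x) \ra (a \ra y)) = 1$ into $(a \ra x) \ra (a \ra y) = 1$, and then, using $a \ra x = 1$, to collapse $1 \ra (a \ra y) = 1$ into $a \ra y = 1$, finishes the verification.

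Once $\uparrow a$ is known to be an implicative filter for every $a$, the argument sketched at the beginning yields injectivity of $j$. The only delicate point in the whole proof is the handling of the $1$ on the left of the implication, which forces the detour through \textbf{(T)} and \textbf{(A)} described above; everything else is a direct unpacking of the definitions.
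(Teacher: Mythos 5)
Your proof is correct and follows essentially the same route as the paper: both arguments hinge on the fact that the principal upset $\uparrow a$ is an implicative filter separating $a$ from $b$. The only difference is that you explicitly verify closure of $\uparrow a$ under modus ponens via \textbf{(S)}, \textbf{(T)} and \textbf{(A)} (a detail the paper leaves implicit), and your handling of the $1 \ra z = 1 \Rightarrow z = 1$ step is accurate.
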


\begin{proof}
	Let $A\in \sha$ and $a, b \in A$. Suppose that $a \neq b$.
	Without loss of generality we can assume that $a \nleq b$.
	Since $\uparrow a \in \IFil(A)$ then $j(a) \neq j(b)$ because
	$\uparrow a \in j(a)$ and $\uparrow a \notin j(b)$. Thus, $j$ is an injective map.
\end{proof}

\begin{rem}
	Note that since $a\leq b$ if and only if $j(a) \subseteq j(b)$,
	we have that $j$ is an order embedding.
\end{rem}

Following the notation employed in \cite{CFMSM}, for every $a, x_1, \cdots, x_{n+1} \in A$
we define recursively
\begin{equation}
	\label{corchete}
	\begin{array}{cc}
		&[x_1, a] = x_1 \ra a,\ \ \ \ \ \ \ \ \ \ \ \ \ \ \ \ \ \ \ \ \ \ \ \ \ \ \ \ \ \ \  \\
		&[x_{n+1}, \dots, x_1, a] = x_{n+1} \ra [x_n, \dots, x_1, a]	
	\end{array}
\end{equation}

For example, $[x,y,z,a]$ is $[x,[y,[z,a]]] = x \ra (y \ra (z \ra a))$.

\begin{lem} \label{permutacion}
	Let $A\in \sha$, $x_1, \dots, x_n \in \sq A$, $a \in A$ and $\sigma$ a permutation of $\{1, \dots, n\}$.
	Then,
	\[
	[x_n, \dots, x_1, a] = [x_{\sigma(n)}, \dots, x_{\sigma(1)}, a].
	\]
\end{lem}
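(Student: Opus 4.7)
The plan is to reduce the claim to the special case of swapping two adjacent entries, since the symmetric group $S_n$ is generated by adjacent transpositions; once we can permute any two neighbours in the bracket notation, composing such swaps gives any $\sigma$.

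First I would establish the core identity: for every $x, y \in \sq A$ and $z \in A$,
\[
x \ra (y \ra z) = y \ra (x \ra z).
\]
This follows immediately from Corollary \ref{sqpropiedades}.2 applied once in each direction: since $x = \sq x$ and $y = \sq y$, the equation
$(\sq x \ra (\sq y \ra z)) \ra (\sq y \ra (\sq x \ra z)) = 1$ gives the inequality
$x \ra (y \ra z) \leq y \ra (x \ra z)$ with respect to the natural order, and swapping the roles of $x$ and $y$ yields the converse inequality. Then (A) turns these two inequalities into the desired equality.

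Next I would prove the adjacent-swap lemma: for any $1 \leq i < n$,
\[
[x_n, \dots, x_{i+1}, x_i, \dots, x_1, a] = [x_n, \dots, x_i, x_{i+1}, \dots, x_1, a].
\]
To see this, set $b := [x_{i-1}, \dots, x_1, a]$ (or $b := a$ when $i = 1$). By the recursive definition of the bracket,
\[
[x_n, \dots, x_{i+1}, x_i, x_{i-1}, \dots, x_1, a] = [x_n, \dots, x_{i+2},\, x_{i+1} \ra (x_i \ra b)],
\]
and by the core identity applied to the innermost two entries (which are in $\sq A$), $x_{i+1} \ra (x_i \ra b) = x_i \ra (x_{i+1} \ra b)$. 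Re-expanding the bracket on the right then gives the swapped form. Strictly, one needs a tiny induction on the number of outer entries $x_{i+2}, \dots, x_n$ to push the equality through each outer $\ra$; this uses Lemma \ref{monotonia}.M2 (monotony of $\ra$ in the second coordinate) together with (A), or one can simply note that if $u = v$ in $A$ then $w \ra u = w \ra v$ trivially, so the induction is immediate.

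Finally, I would write $\sigma$ as a product $\tau_k \cdots \tau_1$ of adjacent transpositions and apply the adjacent-swap lemma $k$ times. The only conceptual step is the commutation identity for elements of $\sq A$, and that has been packaged in Corollary \ref{sqpropiedades}.2; after that the argument is a routine bookkeeping induction. I do not expect any real obstacle beyond ensuring that the commutation lemma is correctly lifted through the outer $\ra$'s, which is why I prefer to prove the adjacent-swap statement explicitly rather than invoking it silently.
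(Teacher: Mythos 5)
Your proof is correct and follows essentially the same route as the paper, whose entire proof reads ``It follows by induction taking into account Corollary \ref{sqpropiedades}.2''; you have simply made the induction explicit via adjacent transpositions and the commutation identity $x \ra (y \ra z) = y \ra (x \ra z)$ for $x,y\in\sq A$. As you note yourself, no monotony is actually needed to lift the swap through the outer entries, since equal elements substituted into $w\ra(-)$ give equal results.
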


\begin{proof}
	It follows by induction taking into account Corollary \ref{sqpropiedades}.2.
\end{proof}

\begin{lem} \label{monotoniadelbracket}
	Let $A\in \sha$ and $x_1, \dots, x_n, a, b \in A$.
	If $a \leq b$ then
	\[
	[x_n, \dots, x_1, a] \leq [x_n, \dots, x_1, b].
	\]
\end{lem}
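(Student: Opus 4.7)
The plan is a straightforward induction on $n$, the length of the bracket. The only ingredient beyond the recursive definition is the monotonicity of $\ra$ in its second argument, which is part M2 of Lemma \ref{monotonia} and is already available in any sub-Hilbert algebra.

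For the base case $n=1$, the inequality to prove is $x_1 \ra a \leq x_1 \ra b$, which is exactly M2 applied to the hypothesis $a \leq b$. For the inductive step, assume the claim for some $n \geq 1$ and let $x_1,\dots,x_{n+1},a,b\in A$ with $a \leq b$. By definition \eqref{corchete},
\[
[x_{n+1},x_n,\dots,x_1,a] = x_{n+1}\ra [x_n,\dots,x_1,a],
\]
and similarly for $b$. The inductive hypothesis gives $[x_n,\dots,x_1,a] \leq [x_n,\dots,x_1,b]$, and one more application of M2 with $c := x_{n+1}$ yields
\[
x_{n+1}\ra [x_n,\dots,x_1,a] \leq x_{n+1}\ra [x_n,\dots,x_1,b],
\]
which is the desired inequality for $n+1$.

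There is no real obstacle: unlike Lemma \ref{permutacion}, where one needs to commute entries and therefore requires all $x_i$ to lie in $\sq A$ so that Corollary \ref{sqpropiedades}.2 applies, here we only insert a fresh $x_{n+1}$ on the outside and never compare or swap positions. Consequently the $x_i$ are arbitrary elements of $A$ and the proof reduces to iterated use of M2.
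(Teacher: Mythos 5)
Your proof is correct and is exactly the argument the paper intends: induction on $n$ with the base case and inductive step both being direct applications of M2 from Lemma \ref{monotonia}. The paper's proof is just the one-line remark ``It follows by induction taking into account M2,'' so your write-up is the same approach spelled out in full.
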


\begin{proof}
	It follows by induction taking into account M2.
\end{proof}

\begin{lem} \label{propdelshrlbracket}
	Let $A\in \sha$ and $x_1, \dots, x_n, a, b, c, d \in A$. Then
	\[
	[x_n, \dots, x_1, a, b, c, d] \leq [x_n, \dots, x_1, [a, b, c], a, b, d].
	\]
\end{lem}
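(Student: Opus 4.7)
The plan is to reduce the general statement to a single base inequality using the monotonicity already established, and then verify the base inequality using axiom \textbf{(S)} twice together with monotonicity in the second coordinate.

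First, by Lemma \ref{monotoniadelbracket}, applied iteratively $n$ times (each time prepending one of $x_1, \dots, x_n$), it is enough to prove the case $n = 0$, namely
\[
[a, b, c, d] \leq [[a,b,c], a, b, d],
\]
which, written out, is the inequality
\[
a \ra (b \ra (c \ra d)) \leq (a \ra (b \ra c)) \ra (a \ra (b \ra d)).
\]

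To establish this base inequality, I would run \textbf{(S)} at two nested levels. By \textbf{(S)} applied with $x = b$, $y = c$, $z = d$, one has
\[
b \ra (c \ra d) \leq (b \ra c) \ra (b \ra d).
\]
Applying $M2$ from Lemma \ref{monotonia} (with the fixed left argument $a$) preserves this inequality under prefixing by $a \ra (-)$:
\[
a \ra (b \ra (c \ra d)) \leq a \ra ((b \ra c) \ra (b \ra d)).
\]
Now applying \textbf{(S)} again, this time with $x = a$, $y = b \ra c$, $z = b \ra d$, yields
\[
a \ra ((b \ra c) \ra (b \ra d)) \leq (a \ra (b \ra c)) \ra (a \ra (b \ra d)).
\]
Transitivity of $\leq$ (which is an order by \textbf{(B)}, \textbf{(I)}, \textbf{(T)}, \textbf{(A)}) then gives the desired base inequality, and the $n$-fold application of Lemma \ref{monotoniadelbracket} completes the argument.

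I do not expect any serious obstacle here: the monotonicity lemma reduces the statement to a three-line manipulation with \textbf{(S)}. The one thing to be careful about is orienting \textbf{(S)} correctly so that the two applications combine by transitivity, rather than trying to mix them with $M1$, which would demand antimonotonicity in the first coordinate and is not available in the needed direction.
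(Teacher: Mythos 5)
Your proof is correct and follows essentially the same route as the paper: the base inequality $[a,b,c,d]\leq [[a,b,c],a,b,d]$ is exactly what the paper obtains ``using \textbf{(S)} and monotonicity'' (it is instance \textbf{(S2)}, derived from \textbf{(S)} in Remark \ref{sobreS} by precisely your two applications of \textbf{(S)} plus $M2$ and transitivity), and the prefix $[x_n,\dots,x_1,-]$ is handled by Lemma \ref{monotoniadelbracket} in both arguments. The only cosmetic difference is the order of the two steps, which is immaterial.
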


\begin{proof}
	We define $\alpha = [a, b, c, d] = a \ra (b \ra (c \ra d))$
	and $\beta = (a \ra (b \ra c)) \ra (a \ra (b \ra d)) = [[a,b,c], a, b, d]$.
	Using $\mathbf{(S)}$ and monotonicity, we get that $\alpha \leq \beta$. Then, by Lemma \ref{monotoniadelbracket} we have that
	\[
	[x_n, \dots, x_1, \alpha] \leq [x_n, \dots, x_1, \beta].
	\]
	Thus,
	\[
	\begin{array}
		[c]{lllll}
		[x_n, \dots, x_1, a, b, c, d]     & =       & [x_n, \dots, x_1, [a, b, c, d]] &  & \\
		& \leq    & [x_n, \dots, x_1, [[a,b,c], a, b, d]]&  &\\
		& =       & [x_n, \dots, x_1, [a,b,c], a, b, d]. &  &
	\end{array}
	\]	
\end{proof}

\begin{lem} \label{distributividad}
	Let $A\in \sha$ and $x_1, \dots, x_n, a, b \in \sq A$. Then
	\[
	[x_n, \dots, x_1, a, b] = [[x_n, \dots, x_1, a], x_n, \dots, x_1, b].
	\]
\end{lem}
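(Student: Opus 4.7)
The plan is to prove the identity by induction on $n$, using Corollary \ref{sqpropiedades}.1 as the engine. The key point to exploit throughout is that every implication lies in $\sq A$ by equation~\eqref{igualdaddesq}, so we may always treat bracket expressions of the form $[x_k,\dots,x_1,a]$ as elements of $\sq A$ (since they unfold as $x_k \ra [x_{k-1},\dots,x_1,a]$, which is an implication).

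For the base case $n=1$, the assertion reduces to
\[
x_1 \ra (a \ra b) = (x_1 \ra a) \ra (x_1 \ra b),
\]
which follows directly from Corollary \ref{sqpropiedades}.1, valid because $x_1, a, b \in \sq A$.

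For the inductive step, assume the result for $n-1$ and expand
\[
[x_n,x_{n-1},\dots,x_1,a,b] \;=\; x_n \ra [x_{n-1},\dots,x_1,a,b].
\]
By the induction hypothesis the inner bracket equals $u \ra v$, where
\[
u := [x_{n-1},\dots,x_1,a] \quad \text{and} \quad v := [x_{n-1},\dots,x_1,b].
\]
Both $u$ and $v$ are implications, so by \eqref{igualdaddesq} they belong to $\sq A$; together with $x_n \in \sq A$ this lets me apply Corollary \ref{sqpropiedades}.1 once more to obtain $x_n \ra (u \ra v) = (x_n \ra u) \ra (x_n \ra v)$. Folding the outer $x_n$ back inside each bracket and rewriting as a bracket of brackets gives the desired equality.

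The only subtle step is the legitimacy of invoking Corollary \ref{sqpropiedades}.1 in the inductive step, since that corollary demands all three arguments to be in $\sq A$. This is precisely why the hypothesis $x_1,\dots,x_n,a,b \in \sq A$ is essential: it ensures via \eqref{igualdaddesq} that every intermediate bracket remains in $\sq A$, allowing the Hilbert-algebra law from $\sq A$ to propagate at each level of the induction.
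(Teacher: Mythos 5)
Your proof is correct and takes essentially the same route as the paper, whose proof is just the one-line remark that the lemma follows by induction from Corollary \ref{sqpropiedades}.1. Your write-up supplies exactly that induction, and correctly identifies the one point that needs checking — that the intermediate brackets $u$ and $v$ are implications and hence lie in $\sq A$ by \eqref{igualdaddesq}, so the corollary applies at every level.
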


\begin{proof}
	It follows by induction taking into account Corollary \ref{sqpropiedades}.1.
\end{proof}

Let $A\in \sha$. If $X \subseteq A$ we will write $\langle X \rangle$ to indicate the
implicative filter generated by $X$.

\begin{lem} \label{il2}
	Let $A\in \sha$, $a \in A$ and $X \subseteq \sq A$ such that $1 \in X$. Then,
	\[
	\langle X \cup \{a\} \rangle = \{b\in A \; | \; \exists \;x_1,\dots,x_n \in X\;
	\text{s.th.}\; [x_1, \dots, x_n, a, b] = 1\}.
	\]
\end{lem}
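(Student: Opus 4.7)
The plan is to establish both inclusions; write $F$ for the set on the right-hand side. For $F\subseteq \langle X\cup\{a\}\rangle$, I would take $b\in F$ with a witnessing sequence $x_1,\dots,x_n\in X$ satisfying $[x_1,\dots,x_n,a,b]=1$. Since all the $x_i$ and $a$ already lie in the implicative filter $\langle X\cup\{a\}\rangle$, iterated applications of modus ponens strip off $x_1,\dots,x_n$ and finally $a$ from this identity and place $b$ inside $\langle X\cup\{a\}\rangle$.

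For the reverse inclusion the task is to verify that $F$ itself is an implicative filter containing $X\cup\{a\}$. The trivial witness $x_1=1\in X$ handles $1\in F$ and $a\in F$ via $[1,a,1]=1\ra(a\ra 1)=1$ and $[1,a,a]=1\ra(a\ra a)=1$, using \textbf{(T)} and \textbf{(I)}. For $x\in X\subseteq \sq A$, I plan to use that $x=\sq x=1\ra x\leq a\ra x$ by antimonotony (Lemma \ref{monotonia}.M1 applied to $a\leq 1$), which gives $[x,a,x]=x\ra(a\ra x)=1$ and hence $x\in F$.

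The main obstacle is closure of $F$ under modus ponens. Suppose $b\in F$ via $[\vec x,a,b]=1$ and $b\ra c\in F$ via $[\vec y,a,b,c]=1$, with entries from $X\subseteq \sq A$. Axiom \textbf{(S)} gives $a\ra(b\ra c)\leq (a\ra b)\ra(a\ra c)$, so Lemma \ref{monotoniadelbracket} upgrades the second identity to $[\vec y,(a\ra b)\ra(a\ra c)]=1$. Setting $p:=a\ra b$, both $p$ and $a\ra c$ lie in $\sq A$ by \eqref{igualdaddesq}. Letting $\vec z$ be the concatenation $\vec y,\vec x$ and using Lemma \ref{saleA2} together with Lemmas \ref{monotoniadelbracket} and \ref{permutacion}, both witnesses extend to the common prefix $\vec z$, yielding $[\vec z,p]=1$ and $[\vec z,p,a\ra c]=1$. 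Lemma \ref{distributividad} now applies, since everything in sight belongs to $\sq A$:
\[
1 \;=\; [\vec z,p,a\ra c] \;=\; \bigl[[\vec z,p],\vec z,a\ra c\bigr] \;=\; [1,\vec z,a\ra c] \;=\; \sq[\vec z,a\ra c],
\]
and as $\sq u=1$ forces $u=1$, we obtain $[\vec z,a,c]=1$, witnessing $c\in F$. The real difficulty is that $a$ itself need not lie in $\sq A$, which blocks the direct Hilbert-algebra style generation argument; the detour through \textbf{(S)} rewrites the combination problem entirely inside $\sq A$, where Lemma \ref{distributividad} becomes applicable.
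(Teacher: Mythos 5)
Your proof is correct and follows essentially the same strategy as the paper: show the right-hand side contains $X\cup\{a\}$, is an implicative filter, and is contained in any implicative filter containing $X\cup\{a\}$, with the modus-ponens closure handled by merging the two witness sequences via Lemma \ref{permutacion}, applying \textbf{(S)}, and cancelling with Lemma \ref{distributividad}. The only (cosmetic) difference is that you apply \textbf{(S)} at the innermost level and use $a\ra b,\,a\ra c\in\sq A$ to invoke Lemma \ref{distributividad} over the whole prefix, where the paper instead routes the same computation through Lemma \ref{propdelshrlbracket}.
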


\begin{proof}
	Let $A\in \sha$, $a \in A$ and $X \subseteq \sq A$ with $1 \in X$.
	We define
	\[
	G := \{b\in A \; | \; \exists \;x_1,\dots,x_n \in X\;\text{s.th.}\; [x_1, \dots, x_n, a, b] = 1\}.
	\]
	First we will see that  $X \cup \{a\} \subseteq G$. Let $x \in X$. Since
	$a \leq 1$, $1 \ra x \leq a \ra x$. However, $1 \ra x = x$, so $x \leq a \ra x$,
	i.e., $x\ra (a\ra x) = 1$. Hence, $x\in G$. Besides, $1 \ra (a\ra a) = 1$ and
	$1\in X$, so $a\in G$. Thus, $X \cup \{a\} \subseteq G$.
	
	Now we will see that $G$ is an implicative filter. Since $1 \ra (a\ra 1) = 1$
	then $1 \in G$. Let $b, c \in A$ such that $b, b \ra c \in G$.
	Then there exist $x_1,\dots, x_n, y_1,\dots, y_m \in X$  such that
	\begin{equation}\label{eqx}
		[x_1, \dots, x_n, a, b] = 1
	\end{equation}
	and
	\begin{equation} \label{eqy}
		[y_1, \dots, y_m, a, b, c] = 1.
	\end{equation}
	Then
	\[
	\begin{array}
		[c]{lllll}
		1    &  =    & [x_1, \dots, x_n, y_1, \dots, y_m, a, b, c] &  & \text{Eq. (\ref{eqy})}\\
		&  =    & [x_1, \dots, x_n, y_1, \dots, y_m, [a, b, c]] &  & \\
		&  =    & [y_1, \dots, y_m, x_1, \dots, x_n, [a, b, c]] &  &\text{Lemma \ref{permutacion}} \\
		&  =    & [y_1, \dots, y_m, x_1, \dots, x_n, a, b, c] &  &\\
		&  \leq & [y_1, \dots, y_m, x_1, \dots, x_{n-1},[x_n, a, b], x_n, a, c] &  & \text{Lemma \ref{propdelshrlbracket}}\\
		&  =    & [y_1, \dots, y_m, x_1, \dots, x_{n-1},[x_n, a, b], [x_n, a, c]] &  &\\
		&  =    & [y_1, \dots, y_m,[x_1, \dots, x_{n-1}, [x_n, a, b]], [x_1, \dots, x_{n-1}, [x_n, a, c]]] &  & \text{Lemma \ref{distributividad}}\\
		&  =    & [y_1, \dots, y_m,[x_1, \dots, x_{n-1}, x_n, a, b], [x_1, \dots, x_{n-1}, x_n, a, c]] &  &\\
		&  =    & [y_1, \dots, y_m, 1, [x_1, \dots, x_{n-1}, x_n, a, c]] &  & \text{Eq. (\ref{eqx})}\\
		&  =    & [y_1, \dots, y_m, x_1, \dots, x_{n-1}, x_n, a, c] &  &\\
	\end{array}
	\]
	Thus,
	\[
	[y_1, \dots, y_m, x_1, \dots, x_{n-1}, x_n, a, c] = 1,
	\]
	so $c\in G$. Then $G$ is an implicative filter.
	
	Finally, it is immediate that if $H$ is an implicative filter such that $X \cup \{a\} \subseteq H$ then
	$G \subseteq H$. Therefore, $G = \langle X \cup \{a\} \rangle$.
\end{proof}

\begin{lem} \label{il3}
	Let $A\in \sha$, $F \in \IFil(A)$ and $a, b \in A$ with $a \ra b \notin F$.
	Then there exists $G \in \IFil(A)$ such that $a \in G$, $b \notin G$ and $F \cap \sq{A} \subseteq G$.
\end{lem}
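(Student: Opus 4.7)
The plan is to take $G$ to be the implicative filter generated by $(F\cap \sq A)\cup\{a\}$, namely $G = \langle (F\cap \sq A)\cup\{a\}\rangle$. The two positive conditions $a\in G$ and $F\cap \sq A\subseteq G$ are then immediate from the definition, so all the content lies in showing $b\notin G$.

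To apply Lemma \ref{il2} to describe the elements of $G$ I first need $1\in F\cap \sq A$: this holds because $F$ is an implicative filter (so $1\in F$) and $1\in \sq A$ (since $\sq 1 = 1\ra 1 = 1$ by $\textbf{(I)}$), and also $F\cap \sq A\subseteq \sq A$, so the hypotheses of Lemma \ref{il2} are met with $X=F\cap \sq A$.

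Now I argue by contradiction. If $b\in G$, Lemma \ref{il2} furnishes $x_1,\dots,x_n\in F\cap\sq A$ such that
\[
[x_1,\dots,x_n,a,b] = 1,
\]
which, unwinding the bracket notation from \eqref{corchete}, is the statement
\[
x_1 \ra \bigl(x_2 \ra \bigl(\cdots \ra (x_n \ra (a\ra b))\cdots\bigr)\bigr) = 1 \in F.
\]
Since $x_1\in F$ and $x_1 \ra [x_2,\dots,x_n,a,b] = 1 \in F$, the implicative filter condition (modus ponens) gives $[x_2,\dots,x_n,a,b]\in F$. Iterating modus ponens along the $x_i\in F$ one obtains $a\ra b \in F$, contradicting the hypothesis $a\ra b\notin F$. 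Hence $b\notin G$.

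The only step that requires a little care is verifying the hypotheses of Lemma \ref{il2} (namely $X\subseteq \sq A$ and $1\in X$); once that is in place, the description of $G$ in Lemma \ref{il2} is exactly tailored so that the iterated modus ponens argument collapses to the desired contradiction, so the rest is routine and no further obstacle is expected.
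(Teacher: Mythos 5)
Your proof is correct and follows essentially the same route as the paper's: take $G = \langle (F\cap \sq A)\cup\{a\}\rangle$, invoke Lemma \ref{il2}, and derive $a\ra b\in F$ by iterated modus ponens to reach a contradiction. The only difference is that you spell out the verification of the hypotheses of Lemma \ref{il2} and the modus ponens iteration, which the paper leaves implicit.
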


\begin{proof}
	We define $G$ as the implicative filter generated by $(F\cap \sq A) \cup \{a\}$.
	Then $a\in G$ and $F\cap \sq A \subseteq G$.
	
	Suppose that $b \in G$. It follows from Lemma \ref{il2} that there exist $x_1,\dots,x_n \in F\cap \sq A$
	such that
	\[
	[x_{1}, x_{2}, \cdots,x_{n}, a, b] = 1.
	\]
	Since $x_1,\dots,x_n\in F$ and $F\in \IFil(A)$ then $a\ra b\in F$,
	which is a contradiction. Therefore, $b\notin G$.
\end{proof}

Let $A\in \sha$. We write $D$ for the complete sublattice of $\IFil(A)^+$ generated by $j(\sq A)$.
Note that this sublattice is necessarily distributive.

\begin{lem} \label{il4}
	Let $A\in \sha$,
	$F, G\in \IFil(A)$ and $W\in D$ such that $F\in W$
	and $F\cap \sq A \subseteq G$. Then $G\in W$.
\end{lem}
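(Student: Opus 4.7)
The plan is to prove the statement by induction on the construction of $W$ inside $D$. Define
\[
\mathcal{C} := \{W\in \IFil(A)^+ \ | \ \text{for all } F,G\in\IFil(A), \ F\in W \text{ and } F\cap \sq A \subseteq G \text{ imply } G\in W\}.
\]
Since $D$ is by definition the smallest complete sublattice of $\IFil(A)^+$ containing $j(\sq A)$, and since meets and joins in $\IFil(A)^+$ are just intersections and unions of upsets, it suffices to show two things: that every generator $j(\sq a)$ belongs to $\mathcal{C}$, and that $\mathcal{C}$ is closed under arbitrary unions and arbitrary intersections computed in $\IFil(A)^+$. From these two facts it will follow that $D\subseteq \mathcal{C}$, which is exactly what the lemma asserts.

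For the base case, take any $a\in A$ and suppose $F\in j(\sq a)$, i.e., $\sq a\in F$. Because $\sq a\in \sq A$, we have $\sq a\in F\cap \sq A$, and the assumption $F\cap \sq A \subseteq G$ then gives $\sq a\in G$, i.e., $G\in j(\sq a)$. Thus $j(\sq a)\in \mathcal{C}$ for every $a\in A$, so $j(\sq A)\subseteq \mathcal{C}$.

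For the inductive step, suppose $\{W_i\}_{i\in I}\subseteq \mathcal{C}$. If $W=\bigcap_{i\in I} W_i$ and $F\in W$, then $F\in W_i$ for every $i$; the hypothesis $F\in \mathcal{C}$ for each $W_i$ gives $G\in W_i$ for every $i$, hence $G\in W$. Analogously, if $W=\bigcup_{i\in I} W_i$ and $F\in W$, then $F\in W_{i_0}$ for some $i_0$, and then $G\in W_{i_0}\subseteq W$. So $\mathcal{C}$ is closed under arbitrary meets and joins of $\IFil(A)^+$, as required.

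I do not anticipate a real obstacle here: the statement is essentially a closure/induction property, and the only piece of algebraic content is the base case, which is forced by the definition of $j(\sq a)$ and by the fact that $\sq a$ lies in $\sq A$. The slightly delicate conceptual point is recognising that $D$, being generated in the complete lattice $\IFil(A)^+$ of upsets of $\IFil(A)$, admits a ``smallest set closed under the relevant operations'' induction, so that verifying the property on generators and on arbitrary unions/intersections is both necessary and sufficient.
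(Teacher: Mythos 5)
Your proof is correct, but it follows a genuinely different route from the paper's. The paper argues via an explicit normal form: since $W\in D$, it writes $W=\bigcup_{i\in I}\bigcap_{k\in K} j(a_{ik})$ with all $a_{ik}\in\sq A$, uses $F\in W$ to pick an index $l$ with $a_{lk}\in F\cap\sq A$ for every $k$, and then the hypothesis $F\cap\sq A\subseteq G$ puts $G$ in $\bigcap_{k}j(a_{lk})\subseteq W$. You instead run the generation of $D$ as an induction: you verify the property on the generators $j(\sq a)$ (which is essentially the same one-line computation, namely that $F\in j(\sq a)$ forces $\sq a\in F\cap\sq A\subseteq G$) and then check that the property is stable under arbitrary unions and intersections, so that minimality of the generated complete sublattice gives $D\subseteq\mathcal{C}$. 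The algebraic content is identical, but your version buys a small gain in rigour: the paper's union-of-intersections normal form for elements of $D$ is asserted without justification, and proving it requires knowing that the class of such sets is itself closed under arbitrary intersections (a complete-distributivity fact about lattices of upsets), whereas your closure argument needs nothing beyond the definition of ``complete sublattice generated by a set''. The only point worth making explicit in your write-up is that $\mathcal{C}$ contains the empty union and the empty intersection (i.e., $\emptyset$ and $\IFil(A)$), which holds trivially.
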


\begin{proof}
	Let $A\in \sha$, and consider $F, G\in \IFil(A)$ and $W\in D$ such that $F\in W$
	and $F\cap \sq A \subseteq G$. Since $W\in D$ then there exists $\{a_{ik}\}_{i\in I, k\in K} \subseteq \sq A$
	such that
	\[
	W = \bigcup_{i\in I}\bigcap_{k\in K} j(a_{ik}).
	\]
	Taking into account that $F\in W$ and  $\{a_{ik}\}_{i\in I, k\in K} \subseteq \sq A$
	we have that there exists $l \in I$ such that for every $k\in K$, $a_{lk} \in F \cap \sq A$.
	But $F \cap \sq A \subseteq G$, so $a_{lk}\in G$ for every $k\in K$. Hence, $G\in W$.
\end{proof}

Let $A\in \sha$. For every $U,V \in \IFil(A)^+$ there exists the maximum of the set
$\mathcal{B} = \{W\in D: W\cap U\subseteq V\}$.
Indeed, since $\mathcal{B} \subseteq D$ then by definition of $D$, there exists
the supremum of $\mathcal{B}$, which will be called $\alpha$, i.e.,
$\alpha = \bigcup_{W\in \mathcal{B}} W$. Thus, $\alpha\in D$ and
$\alpha \cap U = \bigcup_{W\in \mathcal{B}} (W\cap U) \subseteq V$.
Hence, $\alpha$ is the maximum of $\mathcal{B}$. This maximum will be denoted by
$U\Ra V$. Moreover, $(\IFil(A)^+,D)$ is a subresiduated lattice.

\begin{prop} \label{pp}
	Let $A\in \sha$ and $a,b\in A$. Then
	\[
	j(a\ra b) = j(a) \Ra j(b).
	\]
\end{prop}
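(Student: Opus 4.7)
The plan is to prove the two inclusions $j(a \ra b) \subseteq j(a) \Ra j(b)$ and $j(a) \Ra j(b) \subseteq j(a \ra b)$ separately, exploiting in each direction the two ``halves'' of the machinery built up so far: the fact that implications already live in $\sq A$, and the filter-extension Lemmas \ref{il3} and \ref{il4}.

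For the first inclusion I would argue directly from the definition of $\Ra$ as a maximum. By \eqref{igualdaddesq}, $a \ra b = \sq(a \ra b) \in \sq A$, so $j(a \ra b) \in j(\sq A) \subseteq D$. Next, I would check that $j(a \ra b) \cap j(a) \subseteq j(b)$: if $F$ is an implicative filter containing both $a$ and $a \ra b$, then by clause (2) of Definition \ref{ifildef} we have $b \in F$, hence $F \in j(b)$. Thus $j(a \ra b)$ belongs to the set $\{W \in D : W \cap j(a) \subseteq j(b)\}$ whose maximum is $j(a) \Ra j(b)$, so $j(a \ra b) \subseteq j(a) \Ra j(b)$.

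For the reverse inclusion I would proceed by contrapositive. Suppose $F \in \IFil(A)$ satisfies $a \ra b \notin F$; the goal is to find $W \in D$ with $F \in W$ and $W \cap j(a) \nsubseteq j(b)$, thereby showing $F \notin j(a) \Ra j(b)$. Apply Lemma \ref{il3} to obtain $G \in \IFil(A)$ with $a \in G$, $b \notin G$ and $F \cap \sq A \subseteq G$. Now take any $W \in D$ containing $F$: by Lemma \ref{il4}, the inclusion $F \cap \sq A \subseteq G$ forces $G \in W$. Since $a \in G$ we have $G \in W \cap j(a)$, while $b \notin G$ gives $G \notin j(b)$. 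Hence no element of $D$ containing $F$ can satisfy $W \cap j(a) \subseteq j(b)$, so in particular $F \notin j(a) \Ra j(b)$.

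The only genuine obstacle is making sure the membership step in the first direction really puts $j(a \ra b)$ inside $D$; everything else is bookkeeping, since Lemmas \ref{il3} and \ref{il4} are designed exactly so that the ``$\sq A$-trace'' of a filter controls its membership in elements of $D$. Once one recognises that implications automatically land in $\sq A$, the proof reduces to combining these two lemmas with the definition of an implicative filter.
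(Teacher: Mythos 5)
Your proposal is correct and follows essentially the same route as the paper: both arguments rest on $a\ra b\in\sq A$ (so $j(a\ra b)\in D$), the implicative-filter property to get $j(a\ra b)\cap j(a)\subseteq j(b)$, and the combination of Lemmas \ref{il3} and \ref{il4} to rule out any competitor $W\in D$. The only cosmetic difference is that you phrase the second half as a contrapositive for the inclusion $j(a)\Ra j(b)\subseteq j(a\ra b)$, whereas the paper shows directly that every $W$ with $W\cap j(a)\subseteq j(b)$ is contained in $j(a\ra b)$; these are the same argument.
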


\begin{proof}
	Let $A\in \sha$ and $a,b\in A$. We define the set
	\[
	E_{ab} = \{W\in D: W\cap j(a) \subseteq j(b)\}.
	\]
	It is immediate that $j(a\ra b)\cap j(a) \subseteq j(b)$,
	so $j(a\ra b) \in E_{ab}$.
	Now we will show that $j(a\ra b)$ is the maximum of $E_{ab}$.
	In order to see this, let $W\in E_{ab}$, i.e., $W\in D$ and $W\cap j(a) \subseteq j(b)$.
	We will show that $W\subseteq j(a\ra b)$. Let $F\in \IFil(A)$ and suppose that $F\in W$ and $F\notin j(a\ra b)$.
	In particular, $a\ra b \notin F$. Then it follows from Lemma \ref{il3} that there exists $G\in \IFil(A)$
	such that $a\in G$, $b\notin G$ and $F\cap \sq{A} \subseteq G$. Besides, it follows from Lemma \ref{il4}
	that $G\in W$, so $G\in W\cap j(a) \subseteq j(b)$. Thus, $b\in G$, which is a contradiction.
	Hence, $W\subseteq j(a\ra b)$. Therefore, $j(a\ra b)$ is the maximum of $E_{ab}$, which was our aim.
\end{proof}

The following result follows from Proposition \ref{pp}.

\begin{thm} \label{maingoal1}
	Let $A\in \sha$. Then, $(\IFil(A)^+, D)$ with
	$D$ as defined above is a subresiduated lattice where $j(A)$
	is isomorphic to $A$.
\end{thm}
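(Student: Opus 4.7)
The proof is essentially an assembly of results already established in this section, so I do not expect a real obstacle; the hard work is hidden in Proposition~\ref{pp}. The plan breaks into three steps: verifying that $(\IFil(A)^+, D)$ is a subresiduated lattice in the sense of the introduction, verifying that $j$ is a $\{\ra\}$-homomorphism, and invoking injectivity.

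First I would observe that $\IFil(A)^+$ is a bounded distributive lattice (in fact a completely distributive lattice of upsets). Since $D$ is defined as the \emph{complete} sublattice of $\IFil(A)^+$ generated by $j(\sq A)$, it is automatically closed under arbitrary meets and joins, hence bounded (it contains $\IFil(A)$ as empty meet and $\emptyset$ as empty join). The paragraph immediately preceding Proposition~\ref{pp} already produces, for any $U,V \in \IFil(A)^+$, the maximum $U \Rightarrow V$ of the set $\{W \in D : W \cap U \subseteq V\}$; this is exactly the requirement that $E_{U,V}$ have a maximum element. So $(\IFil(A)^+, D)$ is a subresiduated lattice, and its associated implication is the operation $\Rightarrow$ constructed there.

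Next, Proposition~\ref{pp} gives $j(a \ra b) = j(a) \Rightarrow j(b)$ for all $a,b \in A$, so $j(A)$ is closed under the implication of the ambient subresiduated lattice and $j$ preserves $\ra$. Since $1$ is the top of $A$ and every implicative filter contains $1$, we have $j(1) = \IFil(A)$, which is the top of $\IFil(A)^+$, so $j$ also preserves the constant. Finally, Lemma~\ref{il1} shows that $j$ is injective; combined with the previous step, this means that $j : A \to j(A)$ is a bijective $\{\ra,1\}$-homomorphism onto a $\{\ra\}$-subreduct of $(\IFil(A)^+, D)$, hence an isomorphism. This gives the theorem.
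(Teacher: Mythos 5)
Your proposal is correct and follows exactly the route the paper intends: the paper states the theorem as an immediate consequence of Proposition~\ref{pp} (together with the construction of $\Ra$ in the preceding paragraph and the injectivity of $j$ from Lemma~\ref{il1}), and your write-up simply makes those three ingredients explicit. No gaps.
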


It follows from Theorem \ref{maingoal1} that every sub-Hilbert algebra is a
$\{\ra\}$-subreduct of a subresiduated lattice.

\begin{prop}\label{sHLesIK}
	$\sha \subseteq \mathcal{K}$.
\end{prop}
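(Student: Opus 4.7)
The plan is to treat Proposition \ref{sHLesIK} as an almost immediate corollary of Theorem \ref{maingoal1}. Given $A \in \sha$, that theorem produces a subresiduated lattice $(\IFil(A)^+, D)$ together with a map $j \colon A \to \IFil(A)^+$ that is an isomorphism onto its image $j(A)$ regarded as an $\{\ra, 1\}$-algebra. Since $\mathcal{K}$ is closed under isomorphic images (by the paper's convention on subreducts fixed in the footnote to Lemma \ref{noesvariedad}), this will show $A \in \mathcal{K}$.

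Explicitly, I would assemble the following facts that have already been proved: injectivity of $j$ (Lemma \ref{il1}); preservation of $\ra$, that is, $j(a \ra b) = j(a) \Ra j(b)$ (Proposition \ref{pp}); and preservation of $1$ (immediate, since $1 \in F$ for every $F \in \IFil(A)$, so $j(1) = \IFil(A)$, the top element of $\IFil(A)^+$). Together these say that $j(A)$ is closed under $\Ra$, contains the top, and is isomorphic to $A$ as a $\{\ra, 1\}$-algebra; hence $j(A)$ is the universe of a subalgebra of the $\{\ra\}$-reduct of the subresiduated lattice $(\IFil(A)^+, D)$.

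No genuinely new difficulty arises here: the substantive work has already been carried out in the supporting results, notably the description of generated implicative filters (Lemma \ref{il2}), the separation lemma (Lemma \ref{il3}), and Lemma \ref{il4} on the interaction of elements of $D$ with $\sq A$. The present proposition is, in effect, a restatement of Theorem \ref{maingoal1} in the language of subreducts, and combining it with Proposition \ref{KessHL} yields the promised equality $\sha = \mathcal{K}$.
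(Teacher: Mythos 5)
Your proposal is correct and follows exactly the paper's route: the proposition is deduced directly from Theorem \ref{maingoal1}, using the injectivity of $j$ (Lemma \ref{il1}) and the preservation of the implication (Proposition \ref{pp}) to see that $j(A)$ is a $\{\ra,1\}$-subalgebra of the subresiduated lattice $(\IFil(A)^+, D)$ isomorphic to $A$. The extra detail you supply (preservation of $1$ and closure of $\mathcal{K}$ under isomorphic images) is a faithful unpacking of what the paper leaves implicit.
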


It follows from propositions \ref{KessHL} and \ref{sHLesIK} that
\[
\mathcal{K} = \sha.
\]
Then, we obtain the following result.

\begin{cor}
	The class $\mathcal{K}$ is a quasivariety.
\end{cor}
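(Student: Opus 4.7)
The plan is that this corollary is essentially immediate from what has just been established, and the only step is to read off the definition of quasivariety. From Propositions \ref{KessHL} and \ref{sHLesIK} one has the identity $\mathcal{K} = \sha$, so it suffices to verify that $\sha$ is a quasivariety.

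For this I would point directly to Definition \ref{sha}: the class $\sha$ is axiomatized by the five conditions $\mathbf{(B)}$, $\mathbf{(I)}$, $\mathbf{(T)}$, $\mathbf{(S)}$ and $\mathbf{(A)}$. The first four are outright equations, while $\mathbf{(A)}$ is a proper Horn implication of the form $(x \ra y = 1)\wedge(y \ra x = 1)\Ra x = y$, i.e., a quasi-equation. Thus $\sha$ is defined by a finite set of quasi-equations, and by the classical Mal'cev characterization of quasivarieties (equivalently, classes closed under isomorphisms, subalgebras, direct products, and ultraproducts), $\sha$ is a quasivariety. Transferring along the equality $\mathcal{K}= \sha$ yields the statement.

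There is essentially no obstacle; all the substantive work — showing $\mathcal{K}\subseteq \sha$ in Proposition \ref{KessHL}, and building the subresiduated envelope $(\IFil(A)^+,D)$ of an arbitrary $A\in \sha$ in Theorem \ref{maingoal1} to obtain $\sha\subseteq \mathcal{K}$ in Proposition \ref{sHLesIK} — has already been carried out. The only thing worth noting alongside the corollary is that, in view of Lemma \ref{noesvariedad}, $\mathcal{K}$ is not closed under homomorphic images, so the quasivariety $\mathcal{K}=\sha$ is strictly not a variety; this shows the conclusion is optimal in the expected hierarchy.
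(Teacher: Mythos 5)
Your argument is exactly the paper's: the preceding propositions give $\mathcal{K}=\sha$, and $\sha$ is by Definition \ref{sha} axiomatized by quasi-equations (four identities plus the Horn implication $\mathbf{(A)}$), hence a quasivariety. The proposal is correct and matches the paper's reasoning, including the observation via Lemma \ref{noesvariedad} that $\mathcal{K}$ is not a variety.
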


\section{On the $\{\wedge, \ra\}$-subreducts of subresiduated lattices}\label{semireticulos}

In this section we study the class whose members are the $\{\wedge, \ra\}$-subreducts
of subresiduated lattices. We show that this class is a variety and we give
an equational basis for it.

\begin{defn}\label{defdesrs}
	An algebra $(A,\we,\ra,1)$ of type (2,2,0) is a \textbf{subresiduated semilattice} if
	the following equations are satisfied\footnote{Since $(A,\we)$ is a semilattice (by equations \textbf{(SL1)}-\textbf{(SL4)}), it is a poset. The order relation appearing in ''equations'' \textbf{(SR2)} and \textbf{(SR3)} is that associated to the semilattice structure, so, this inequalities may be replaced in an obvious way by actual equations.}:
	\begin{enumerate}
		\item[\textbf{(SL1)}] $x \wedge (y \wedge z) = (x \wedge y) \wedge z$,
		\item[\textbf{(SL2)}] $x \wedge y = y \wedge x$,
		\item[\textbf{(SL3)}] $x \wedge x = x$,
		\item[\textbf{(SL4)}] $x \wedge 1 = x$.
		\item[\textbf{(SR1)}] $(x \we y) \ra y = 1$,
		\item[\textbf{(SR2)}] $x \ra y \leq z \ra (x \ra y)$,
		\item[\textbf{(SR3)}] $x \we (x \ra y) \leq y$,
		\item[\textbf{(SR4)}] $z \ra (x \we y) = (z \ra x) \we (z \ra y)$.	
	\end{enumerate}
\end{defn}

In an equivalent way, an algebra $(A,\we,\ra,1)$ of type (2,2,0) is a subresiduated
semilattice if $(A,\we,1)$ is a bounded semilattice (i.e., a semilattice with a greatest element) and the equations
$\textbf{(SR1)}$ to $\textbf{(SR4)}$ are satisfied. We write $\srs$ to indicate the variety whose members
are subresiduated semilattices.
The definition of subresiduated semilattice is motivated by \cite[Theorem 1]{EH}.
\vskip.2cm

Let $A \in \srs$ and $a\in A$. As in the case of subresiduated lattices and sub-Hilbert algebras,
we define $\sq a = 1\ra a$ and $\sq A := \{\sq a \ | \ a \in A\}$.

\begin{lem}\label{lemant}
	Let $A \in \srs$. The following conditions are satisfied for every $a,b,c\in A$:
	\begin{enumerate}
		\item If $a\leq b$, then $c\ra a \leq c\ra b$.
		\item $(a\ra b)\we (b\ra c) \leq a\ra c$.
		\item $a\leq b$ if and only if $a\ra b = 1$.
		\item If $a\leq b$, then $b\ra c \leq a\ra c$.
		\item $\sq(a\ra b) = a\ra b$.
	\end{enumerate}
\end{lem}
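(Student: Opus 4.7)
The plan is to establish the five items in the order (3), (1), (5), (4), (2), together with the auxiliary identity $a\ra a = 1$, which follows at once from \textbf{(SL3)} and \textbf{(SR1)}. For (3), the forward direction uses $a = a\we b$ and \textbf{(SR1)}; the converse uses $a = a\we 1 = a\we (a\ra b)\leq b$ via \textbf{(SL4)} and \textbf{(SR3)}. For (1), if $a\leq b$ then $a = a\we b$, so \textbf{(SR4)} yields $c\ra a = c\ra (a\we b) = (c\ra a)\we (c\ra b)$, whence $c\ra a\leq c\ra b$. For (5), \textbf{(SR3)} with $x = 1$ and $y = a\ra b$ gives $\sq(a\ra b)\leq a\ra b$, while \textbf{(SR2)} with $z = 1$ gives $a\ra b\leq\sq(a\ra b)$.

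The crucial device for (4) and (2) is a mild deduction-style tool: if $d = 1\ra e$ for some $e$ and $a\we d\leq c$, then $d\leq a\ra c$. Its proof combines (1) applied to $a\we d\leq c$ with \textbf{(SR4)} and $a\ra a = 1$ to obtain $a\ra d\leq a\ra c$, then invokes \textbf{(SR2)} with $x = 1$, $y = e$, $z = a$ to get $d = 1\ra e\leq a\ra(1\ra e) = a\ra d$, and chains these. For (4), set $d := b\ra c$: then $a\we d\leq b\we (b\ra c)\leq c$ by $a\leq b$ and \textbf{(SR3)}, and $d = \sq d$ by (5), so the tool applies. For (2), set $x := (a\ra b)\we (b\ra c)$: two applications of \textbf{(SR3)} yield $a\we x\leq c$, and \textbf{(SR4)} with $z = 1$ combined with (5) shows that $\sq A$ is closed under meets, so $x = \sq x$, and the tool applies once more.

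The delicate point is avoiding circularity when handling (4): a naive attempt to derive antimonotony in the first coordinate as one does in subresiduated lattices would invoke an analogue of \textbf{(A5)} that is absent from the semilattice signature, so the work has to be shifted onto \textbf{(SR2)}. Once the order above is fixed, every step reduces to one of \textbf{(SL1)}--\textbf{(SL4)} or \textbf{(SR1)}--\textbf{(SR4)}.
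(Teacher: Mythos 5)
Your proof is correct, and every step reduces to the stated axioms as claimed. The underlying computations are in fact the same ones the paper uses --- the identity $a\ra(a\we d)=(a\ra a)\we(a\ra d)=a\ra d$ from \textbf{(SR1)}/\textbf{(SR4)}, and the inequality $d\leq a\ra d$ for $d\in\sq A$ from \textbf{(SR2)} --- but the packaging differs. You isolate these into a reusable residuation-style tool (``if $d\in\sq A$ and $a\we d\leq c$ then $d\leq a\ra c$,'' which is precisely the semilattice analogue of the remark the paper makes for subresiduated lattices after Definition 2.1) and then derive items 2 and 4 independently from it, after first securing 3, 1 and 5. The paper instead proves item 2 directly by an inline computation, bounding $(a\ra b)\we(b\ra c)$ below $a\ra[a\we(a\ra b)\we(b\ra c)]$ and above by $a\ra c$, and then obtains item 4 as a one-line corollary of items 2 and 3 via $b\ra c=1\we(b\ra c)=(a\ra b)\we(b\ra c)\leq a\ra c$; so in the paper the dependency runs $2\Rightarrow 4$, whereas in your argument both follow from the common lemma and neither depends on the other. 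Your organization is slightly longer but makes the role of $\sq A$ (closure under meets, the residuation property for its elements) explicit, which is conceptually closer to the pair presentation $(A,D)$ used later in Theorem 4.5; the paper's route to item 4 is shorter. Your closing remark about why first-coordinate antimonotony cannot be obtained via an \textbf{(A5)}-style axiom is accurate --- that axiom involves $\vee$ and is unavailable here --- and both proofs correctly route around it.
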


\begin{proof}
	Item 1. follows from $\textbf{(SR4)}$.
	
	In order to prove 2. let $a,b,c \in A$. Let us first note the fact that $x \ra x = 1$ holds in $\srs$. That is immediate from $\textbf{(SR1)}$. It follows from $\textbf{(SR3)}$ that $a\we (a\ra b) \leq b$ and
	$b\we (b\ra c)\leq c$. Then
	\[
	a\we (a\ra b) \we (b\ra c) \leq c.
	\]
	Thus, it follows from 1. that
	\begin{equation} \label{eqant1}
		a\ra[a\we (a\ra b) \we (b\ra c)] \leq a\ra c.
	\end{equation}
	But, by $\textbf{(SR1)}$, $\textbf{(SR2)}$ and $\textbf{(SR4)}$,
	\[
	\begin{array}
		[c]{lllll}
		a\ra[a\we (a\ra b) \we (b\ra c)] &     =  &(a\ra a)\we [a\ra (a\ra b)] \we [a\ra (b\ra c)] &  & \\
		&     =  & [a\ra (a\ra b)] \we [a\ra (b\ra c)] &  &\\
		& \geq   & (a\ra b) \we (b\ra c). &  &
	\end{array}
	\]
	Hence,
	\begin{equation}\label{eqant2}
		(a\ra b) \we (b\ra c) \leq a\ra[a\we (a\ra b) \we (b\ra c)].
	\end{equation}
	So, by (\ref{eqant1}) and (\ref{eqant2}),
	$(a\ra b) \we (b\ra c) \leq a\ra c$.
	
	Now we will see 3. Let $a,b\in A$. Suppose that $a\leq b$. Then, by $\textbf{(SR1)}$,
	$a\ra b = (a\we b)\ra b = 1$, so $a\ra b = 1$. Conversely, assume that
	$a\ra b = 1$. Thus it follows from $\textbf{(SR3)}$ that $a = a \we (a\ra b) \leq b$,
	so $a\leq b$.
	
	Now we will show 4. Let $a,b\in A$ such that $a\leq b$.
	Taking into account 2. and 3.,
	$b\ra c =  1 \we (b\ra c) = (a\ra b) \we (b\ra c) \leq a\ra c$.
	
	Finally, 5. follows from $\textbf{(SR2)}$ and $\textbf{(SR3)}$.
\end{proof}

Note that it follows from 5. of Lemma \ref{lemant} that 
\[
\sq A = \{a\in A: \sq a = a\}.
\]

It naturally arise the question if some characterization as pairs can be given for
subresiduated semilattices. This is in fact the case.

\begin{defn}\label{srs-pair}
	An SRS-pair is a pair $(A,D)$ such that $A$ is a bounded semilattice, $D$ is a bounded semilattice of $A$, 
	and for every $a,b\in A$ there exists the maximum of the set $\{d \in \sq A \ | \ d\wedge a \leq b\}$,
	which will be denoted by $a\ra_D b$. If there is not ambiguity we write $a\ra b$ in place of $a\ra_D b$.
\end{defn}

Note that if $(A,D)$ is a SRS-pair then $D = \{a\in A:1\ra a = a\}$
and that SRS-pairs can be seen as algebras $(A,\we,\ra,1)$ of type $(2,2,0)$.   

\begin{thm}\label{lemao}
	An algebra $(A,\we,\ra,1)$ of type $(2,2,0)$ is a subresiduated semilattice if and only if
	it is a SRS-pair.
\end{thm}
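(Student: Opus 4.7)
The plan is to prove the two implications separately. For the direction from SRS-pair to subresiduated semilattice, given $(A, D)$ with $a \ra b := \max\{d \in D : d \wedge a \leq b\}$, axioms (SR1)--(SR4) follow by direct verification: (SR1) because $1 \in D$ and $1 \wedge (x \wedge y) \leq y$, so $1$ lies in (and is the maximum of) the relevant set; (SR3) is built into the definition of $\ra$; (SR2) because $x \ra y \in D$ and $(x \ra y) \wedge z \leq x \ra y$, placing $x \ra y$ inside $E_{z,(x\ra y)}$; and (SR4) splits into two inclusions --- the inequality ``$\leq$'' because $z \ra (x \wedge y)$ belongs to both $E_{zx}$ and $E_{zy}$, and ``$\geq$'' because $D$ is closed under $\wedge$ so $(z \ra x) \wedge (z \ra y) \in D$, while $z \wedge (z \ra x) \wedge (z \ra y) \leq x \wedge y$.

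For the converse I would mirror the recipe used for subresiduated lattices and set $D := \sq A = \{a \in A : 1 \ra a = a\}$. The subsemilattice conditions on $D$ are short: $1 \in D$ by (SR1) taken at $x = y = 1$, and closure under $\wedge$ follows from (SR4) with $z = 1$ (if $1\ra a = a$ and $1 \ra b = b$ then $1 \ra (a \wedge b) = a \wedge b$). The content of the theorem then reduces to verifying that, for every $a, b \in A$, the element $a \ra b$ is the maximum of $E_{ab} = \{d \in D : d \wedge a \leq b\}$. Membership is immediate: $a \ra b \in D$ by Lemma \ref{lemant}.5, and $(a \ra b) \wedge a \leq b$ is exactly (SR3).

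The main obstacle is maximality: given $d \in D$ with $d \wedge a \leq b$, show $d \leq a \ra b$. I plan a two-link chain. First, (SR4) together with the consequence $a \ra a = 1$ of (SR1) gives $a \ra (d \wedge a) = (a \ra d) \wedge (a \ra a) = a \ra d$; combined with $d \wedge a \leq b$ and monotonicity of $a \ra (-)$ in the right coordinate (Lemma \ref{lemant}.1), this yields $a \ra d \leq a \ra b$. Second, instantiating (SR2) at $x = 1$, $y = d$, $z = a$ one obtains $1 \ra d \leq a \ra (1 \ra d)$; since $d \in D$ this collapses to $d \leq a \ra d$. Concatenating the two inequalities yields $d \leq a \ra d \leq a \ra b$, finishing the argument.
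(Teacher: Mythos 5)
Your proof is correct and follows essentially the same route as the paper's: the same verification of (SR1)--(SR4) from the pair structure, and for the converse the same two-link chain $d = 1\ra d \le a\ra d = a\ra(d\we a)\le a\ra b$ establishing maximality. The only cosmetic differences are that you justify (SR4) directly via membership in the sets $E_{ab}$ rather than via infima computed in $D$ versus $A$, and you obtain $d\le a\ra d$ from (SR2) instead of from the derived antimonotony in the first coordinate; both are equivalent.
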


\begin{proof}
	Let $A\in \srs$.
	In order to show that $\sq A$ is a bounded semilattice of $A$, first note that $\sq 1 = 1$. 
	Besides, by $\textbf{(SR4)}$, $\sq(a \we b) = \sq a \we \sq b$.
	Thus, $\sq A$ is a bounded semilattice of $A$.
	Now we will see that for every $a,b\in A$ there exists the maximum of the set $E_{a b} : = \{d \in \sq A \ | \ d\we a \leq b\}$.
	Let $a,b\in A$.
	By Lemma \ref{lemant}, $a\ra b\in \sq A$. Besides, by $\textbf{(SR3)}$, $a \we (a \ra b) \leq b$. Then $a \ra b \in E_{a b}$.
	Let now $d \in E_{a b}$, so $d\in \sq A$ and $a \we d \leq b$. It follows from Lemma \ref{lemant}
	that $a \ra (a \we d) \leq a \ra b$. Besides, by $\textbf{(SR1)}$ and $\textbf{(SR4)}$,
	\[
	a \ra (a \we d) = (a \ra a) \we (a \ra d) = a \ra d.
	\]
	Thus,
	\begin{equation}\label{max1}
		a\ra d \leq a\ra b.
	\end{equation}
	On the other hand, since $a\leq 1$, by Lemma \ref{lemant} we get
	\begin{equation} \label{max2}
		1 \ra d \leq a \ra d.
	\end{equation}
	Thus, by (\ref{max1}), (\ref{max2}) and the fact that $d = \sq d$, we deduce that
	$d \leq a \ra b$.
	Therefore, $a \ra b$ is the maximum of $E_{a b}$.
	
	Conversely, let $(A,D)$ be a SRS-pair.
	The condition $\textbf{(SR1)}$ follows from that $1\in D$.
	Also note that for every $a,b,c\in A$, since $a\ra b\in D$ and $(a\ra b)\we c \leq a\ra b$ then $a\ra b \leq c\ra (a\ra b)$,
	which is $\textbf{(SR2)}$. The condition $\textbf{(SR3)}$ is immediate. In order to show $\textbf{(SR4)}$,
	first we will see that if $a,b,c\in A$ and $a\leq b$ then $c\ra a \leq c\ra b$. Suppose that $a\leq b$.
	Then $c\we (c\ra a) \leq a\leq b$, so $c\we (c\ra a)\leq b$. Taking into account that $c\ra a\in D$
	we get $c\ra a\leq c\ra b$. Now consider $a,b,c$ arbitrary elements of $A$. Since $a\we b\leq a$ and $a\we b\leq b$ then
	$c\ra (a\we b)\leq c\ra a$ and $c\ra (a\we b)\leq c\ra a$, so $c\ra (a\we b)$ is a lower bound of $\{c\ra a,c\ra b\}$ in $D$.
	Let $d$ be a lower bound of $\{c\ra a,c\ra b\}$ in $D$. Then $d\in D$, $d\leq c\ra a$ and $d\leq c\ra b$, so
	$d\we c\leq a$ and $d\we c\leq b$. Thus, $d\we c\leq a\we b$, so $d\leq c\ra (a\we b)$. Thus,
	$c\ra (a \we b)$ is the infimum of $\{c\ra a,c\ra b\}$ in $D$. But since $D$ is a semilattice of $A$
	then the infimum of $\{c\ra a,c\ra b\}$ in $D$ is equal to the infimum of $\{c\ra a,c\ra b\}$ in $A$.
	Therefore, $c\ra (a\we b) = (c\ra a)\we (c\ra b)$.
\end{proof}

Let $A \in \srs$. We write $\Fil(A)$ to indicate the set of filters of $A$. It follows from $\textbf{(SR3)}$
that $\Fil(A)\subseteq \IFil(A)$ (the converse inclusion is not true in general).
Moreover, $\Fil(A)^+$ is a complete lattice (which is bounded and distributive).
Let $j : A \to \Fil(A)^+$ be the map given by $j(a) := \{F \in \Fil(A)\ | \ a \in F\}$. It is immediate
that $j$ is injective, $j(1) = \Fil(A)$ and $j(a\we b) = j(a) \cap j(b)$ for every $a,b\in A$.

Then we have the following result.

\begin{lem}
	Let $A\in \srs$. Then $j$ is an embedding of bounded semilattices.
\end{lem}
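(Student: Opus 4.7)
The plan is to verify three properties, all of which follow from facts already noted in the paragraph preceding the lemma: that $j$ is injective, that $j(1)$ is the top element of $\Fil(A)^+$, and that $j(a\we b)$ is the meet of $j(a)$ and $j(b)$ in $\Fil(A)^+$. First I would pin down the semilattice structure of the target: since $\Fil(A)^+$ consists of upsets of the poset $\Fil(A)$ ordered by inclusion, its top element is $\Fil(A)$ itself and binary meet is set-theoretic intersection. Under this identification the statement of the lemma is precisely the conjunction of the three assertions made just before it.

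For injectivity, I would take $a,b\in A$ with $a\neq b$; by antisymmetry we may assume $a\not\leq b$. Because $(A,\we,1)$ is a bounded semilattice, the principal filter ${\uparrow}a$ belongs to $\Fil(A)$, and by construction $a\in {\uparrow}a$ while $b\notin {\uparrow}a$. Hence ${\uparrow}a\in j(a)\setminus j(b)$, so $j(a)\neq j(b)$. For preservation of the top, every filter contains $1$, so every $F\in \Fil(A)$ lies in $j(1)$, giving $j(1)=\Fil(A)$. For preservation of meets, the definition of a filter gives $a\we b\in F$ if and only if both $a\in F$ and $b\in F$, so $F\in j(a\we b)$ iff $F\in j(a)\cap j(b)$, whence $j(a\we b)=j(a)\cap j(b)$.

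The main obstacle here is essentially cosmetic rather than mathematical: there is no real difficulty, only the bookkeeping of noting that the operations of $\Fil(A)^+$ are, respectively, $\Fil(A)$ and $\cap$, after which the lemma is an immediate consequence of the three remarks already made. The only minor subtlety is that the argument for injectivity uses principal filters, which are genuine filters of the bounded semilattice $A$, rather than merely implicative filters (compare Remark \ref{filesifil}).
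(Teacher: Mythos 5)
Your proof is correct and follows essentially the same route as the paper, which simply records in the preceding paragraph that $j$ is injective, $j(1)=\Fil(A)$, and $j(a\we b)=j(a)\cap j(b)$, with injectivity obtained via principal filters exactly as you do (mirroring Lemma \ref{il1}). Nothing is missing.
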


Let $A\in \srs$. We define $T: = j(\sq A)$, which is a subset of $\Fil(A)^+$. We also define $D$
as the complete sublattice of $\Fil(A)^+$ generated by $T$.
Following a reasoning similar to the employed in Section \ref{subHilbert}
it is possible to show that the pair $(\Fil(A)^+,D)$ is a subresiduated lattice.
We write $\Ra$ for the implication in this algebra, i.e., for every $U,V\in \Fil(A)^+$,
$U\Ra V: = \max\{W\in D: W\cap U\subseteq V\}$.

Our next aim is to show that
$j(a\ra b) = j(a) \Ra j(b)$. In order to prove it we will see the following lemma.

\begin{lem} \label{is3}
	Let $A\in \srs$, $F \in \Fil(A)$ and $a, b \in A$ such that $a \ra b \notin F$.
	Then there exists $G \in \Fil(A)$ such that $a \in G$, $b \notin G$ and $F \cap \sq A \subseteq G$.
\end{lem}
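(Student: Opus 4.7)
The plan is to adapt the proof of Lemma~\ref{il3} to the present semilattice setting, replacing implicative filters by semilattice filters and using the pair-characterization of the implication from Theorem~\ref{lemao}.

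First I would define $G$ to be the semilattice filter of $A$ generated by $(F\cap \sq A)\cup\{a\}$. By construction $a\in G$ and $F\cap \sq A\subseteq G$, so the whole content of the lemma is the verification that $b\notin G$. This is where I would invoke the hypothesis $a\ra b\notin F$, aiming for a contradiction of the form ``$b\in G$ forces $a\ra b\in F$''.

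The key preparatory step is to describe $G$ explicitly. Since $A$ is a bounded meet-semilattice, an element lies in $G$ iff it dominates a finite meet of elements of $(F\cap\sq A)\cup\{a\}$. Now $F\cap\sq A$ is closed under finite meets: it contains $1$; it is closed under $\we$ in $A$ because $F$ is a filter; and it is closed under $\we$ in $\sq A$ because $\sq(x\we y)=\sq x \we \sq y$ by $\textbf{(SR4)}$. Consequently every element of $G$ is either of the form $y\geq d$ or of the form $y\geq d\we a$ for some $d\in F\cap\sq A$.

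Assuming $b\in G$, I would therefore split into two cases. If $b\geq d$ with $d\in F\cap \sq A$, then $d\we a\leq d\leq b$ and $d\in\sq A$. If instead $b\geq d\we a$ with $d\in F\cap \sq A$, again $d\we a\leq b$ with $d\in\sq A$. In either case, the characterization $a\ra b=\max\{d'\in\sq A \mid d'\we a\leq b\}$ provided by Theorem~\ref{lemao} yields $d\leq a\ra b$. Since $d\in F$ and $F$ is a (semilattice) filter, upward closure gives $a\ra b\in F$, contradicting the hypothesis. Hence $b\notin G$, as required.

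The main technical point, and the only place where the argument diverges from the Hilbert-style Lemma~\ref{il3}, is the description of the generated filter: here we can use ordinary meet-semilattice generation instead of the iterated bracket $[x_1,\dots,x_n,a,b]$ calculus, and then the whole job of Lemmas~\ref{permutacion}--\ref{il2} is replaced by the one-line appeal to Theorem~\ref{lemao}.
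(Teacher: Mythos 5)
Your proof is correct and follows essentially the same route as the paper's: the same generated filter $G$, the same description of its elements as those dominating $d\we a$ for some $d\in F\cap\sq A$, and the same contradiction via showing $d\leq a\ra b$ and using upward closure of $F$. The only cosmetic difference is that you obtain the inequality $d\leq a\ra b$ by citing the maximum characterization from Theorem~\ref{lemao}, whereas the paper recomputes it directly from \textbf{(SR1)}, \textbf{(SR4)} and Lemma~\ref{lemant}; both are valid.
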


\begin{proof}
	Let $A\in \srs$, $F \in \Fil(A)$ and $a, b \in A$ such that $a \ra b \notin F$.
	We define $G$ as the filter generated by $(F \cap \sq A) \cup \{a\}$.
	Then $a \in G$ and $F \cap \sq A \subseteq G$.
	
	Suppose that $b \in G$. Then there exists $x \in (F \cap \sq A)$ such that $x \we a \leq b$.
	Thus, it follows from Lemma \ref{lemant}, SR1. and SR4. that
	\[
	\begin{array}
		[c]{lllll}
		a\ra b                                  &  \geq  &a \ra (a \we x) &  & \\
		&     =  & (a \ra a) \we (a \ra x) &  &\\
		&     =  & a\ra x, &  &
	\end{array}
	\]
	so
	\begin{equation}\label{eqf}
		a\ra x \leq a\ra b.
	\end{equation}
	Besides, it follows again
	by Lemma \ref{lemant} and the inequality $a\leq 1$ that
	\begin{equation}\label{eqt}
		1 \ra x \leq a \ra x.
	\end{equation}
	Since $x \in \sq A$ then it follows from (\ref{eqf}) and (\ref{eqt}) that
	$x \leq a \ra b$. But $x \in F$, so $a \ra b \in F$, which is a contradiction.
	Therefore, $b \notin G$. 	
\end{proof}

\begin{lem} \label{is4}
	Let $A\in \srs$, $F, G \in \Fil(A)$ and $W \in D$ with $F \in W$
	and $F \cap \sq A \subseteq G$. Then $G \in W$.
\end{lem}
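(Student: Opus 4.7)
The plan is to mimic almost verbatim the argument given for Lemma \ref{il4}, the only difference being that ordinary filters play the role of implicative filters. The key structural input is that $D$ is the complete sublattice of the completely distributive lattice $\Fil(A)^+$ generated by $j(\sq A)$, hence every element of $D$ can be written in the form
\[
W = \bigcup_{i\in I}\bigcap_{k\in K_i} j(a_{ik})
\]
with $\{a_{ik}\}_{i\in I,\, k\in K_i} \subseteq \sq A$. (This is the same representation invoked in the proof of Lemma \ref{il4}; the fact that $\Fil(A)^+$ is the lattice of upsets of a poset ensures enough distributivity.)

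Granted this representation, the proof proceeds as follows. First I would unfold the hypothesis $F\in W$: from the display above, there must exist $l\in I$ such that $F\in \bigcap_{k\in K_l} j(a_{lk})$, that is, $a_{lk}\in F$ for every $k\in K_l$. Since each $a_{lk}$ lies in $\sq A$, we actually have $a_{lk}\in F\cap \sq A$ for every $k\in K_l$.

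Next I would invoke the hypothesis $F\cap \sq A \subseteq G$ to conclude $a_{lk}\in G$ for every $k\in K_l$. This means $G\in \bigcap_{k\in K_l} j(a_{lk})$, and therefore
\[
G\in \bigcup_{i\in I}\bigcap_{k\in K_i} j(a_{ik}) = W,
\]
as required.

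The only genuine subtlety is the representation result used at the outset; once it is accepted, the argument is a purely set-theoretic chase. I expect no further obstacles, since no use of the operation $\ra$ or of the semilattice structure of $A$ is needed beyond what is already encoded in the membership of $\sq A$.
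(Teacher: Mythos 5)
Your proof is correct and is essentially the paper's own argument: the paper simply states that the proof is analogous to that of Lemma \ref{il4}, and you have reproduced that argument verbatim with $\Fil(A)$ in place of $\IFil(A)$, relying on the same representation of elements of $D$ as unions of intersections of sets $j(a_{ik})$ with $a_{ik}\in\sq A$.
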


\begin{proof}
	Analogous to the proof Lemma \ref{il4}.
\end{proof}

\begin{prop}
	Let $A\in \srs$ and $a, b \in A$. Then
	\[
	j(a \ra b) = j(a) \Ra j(b).
	\]
\end{prop}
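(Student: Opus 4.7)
The plan is to mimic the strategy of Proposition \ref{pp}, since the set-up has been designed precisely so that Lemmas \ref{is3} and \ref{is4} play the roles of Lemmas \ref{il3} and \ref{il4} in the Hilbert case. Concretely, I would prove the equality by showing that $j(a\ra b)$ is the maximum of
\[
E_{ab} := \{W\in D : W\cap j(a)\subseteq j(b)\}.
\]

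First I would check that $j(a\ra b)\in E_{ab}$. Membership in $D$ is automatic because Lemma \ref{lemant}.5 gives $a\ra b\in\sq A$, hence $j(a\ra b)\in T\subseteq D$. For the inclusion $j(a\ra b)\cap j(a)\subseteq j(b)$, take $F\in\Fil(A)$ with $a,a\ra b\in F$; since $F$ is a (meet-)filter and $\textbf{(SR3)}$ gives $a\we(a\ra b)\leq b$, upward closure yields $b\in F$, i.e., $F\in j(b)$.

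Next I would show maximality. Let $W\in E_{ab}$ and suppose, toward a contradiction, that there exists $F\in W$ with $F\notin j(a\ra b)$, so $a\ra b\notin F$. Applying Lemma \ref{is3} produces a filter $G$ with $a\in G$, $b\notin G$, and $F\cap\sq A\subseteq G$. Then Lemma \ref{is4}, applied to $F$, $G$ and $W\in D$, forces $G\in W$. Consequently $G\in W\cap j(a)\subseteq j(b)$, so $b\in G$, contradicting $b\notin G$. Therefore $W\subseteq j(a\ra b)$, and since $W\in E_{ab}$ was arbitrary, $j(a\ra b)=\max E_{ab}=j(a)\Ra j(b)$.

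No step looks genuinely difficult, because the hard analytic content has already been isolated in Lemmas \ref{is3} and \ref{is4}; the only point that requires a moment's care is the very first one, verifying $j(a\ra b)\in D$. This is where the choice to define $D$ as the complete sublattice of $\Fil(A)^+$ generated by $T=j(\sq A)$ pays off: once one knows $a\ra b\in\sq A$, membership in $D$ is immediate. The rest is a transcription of the Hilbert-algebra argument, with $\Fil(A)$ replacing $\IFil(A)$ throughout.
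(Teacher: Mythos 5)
Your proof is correct and follows essentially the same route as the paper: show $j(a\ra b)\in E_{ab}$ and then establish maximality by contradiction via Lemmas \ref{is3} and \ref{is4}. You actually supply slightly more detail than the paper does on the two membership checks ($j(a\ra b)\in D$ via Lemma \ref{lemant}.5, and the inclusion via \textbf{(SR3)} and upward closure of filters), which the paper dismisses as ``immediate.''
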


\begin{proof}
	Let $A\in \srs$ and $a,b\in A$. We define the set
	\[
	E_{ab} := \{W\in D: W\cap j(a) \subseteq j(b)\}.
	\]
	It is immediate that $j(a\ra b)\cap j(a) \subseteq j(b)$,
	so $j(a\ra b) \in E_{ab}$.
	Now we will show that $j(a\ra b)$ is the maximum of $E_{ab}$.
	In order to see this, let $W\in E_{ab}$, i.e., $W\in D$ and $W\cap j(a) \subseteq j(b)$.
	We will show that $W\subseteq j(a\ra b)$. Let $F\in \Fil(A)$ and suppose that $F\in W$ and $F\notin j(a\ra b)$.
	In particular, $a\ra b \notin F$. Then it follows from Lemma \ref{is3} that there exists $G\in \Fil(A)$
	such that $a\in G$, $b\notin G$ and $F\cap \sq{A} \subseteq G$. Besides, it follows from Lemma \ref{is4}
	that $G\in W$, so $G\in W\cap j(a) \subseteq j(b)$. Thus, $b\in G$, which is a contradiction.
	Hence, $W\subseteq j(a\ra b)$. Therefore, $j(a\ra b)$ is the maximum of $E_{ab}$, which was our aim.
\end{proof}

\begin{cor} \label{corSRS}
	Let $A \in \srs$. Then $j$ is an embedding from $A$ to the $\{\ra, \we\}$-reduct of $(\Fil(A)^+, D)$.
\end{cor}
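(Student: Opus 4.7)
The plan is simply to package the facts that have already been established about the map $j$. The lemma immediately preceding Lemma \ref{is3} records that $j$ is an embedding of bounded semilattices, which gives injectivity and the identities $j(1) = \Fil(A)$ and $j(a \we b) = j(a) \cap j(b)$, where $\cap$ is the meet operation of $\Fil(A)^+$ (and hence also the meet in the subresiduated lattice $(\Fil(A)^+, D)$). The proposition just above the corollary gives the remaining identity $j(a \ra b) = j(a) \Ra j(b)$. Together these say that $j$ is a $\{\we, \ra, 1\}$-homomorphism from $A$ to the $\{\we, \ra, 1\}$-reduct of $(\Fil(A)^+, D)$, and since it is injective it is an embedding, which is the content of the corollary.

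There is no real obstacle: the work has already been done in establishing that $\Fil(A)^+$ is a bounded distributive lattice, that $D$ is the complete sublattice generated by $j(\sq A)$, that $(\Fil(A)^+, D)$ is a subresiduated lattice in the sense of Theorem \ref{lemao}, and in the verification (via Lemmas \ref{is3} and \ref{is4}) that the implication $\Ra$ defined on $\Fil(A)^+$ pulls back through $j$ to the implication of $A$. The only thing I would add, for clarity, is an explicit one-line remark that because $(\Fil(A)^+, D)$ is a subresiduated lattice and its $\{\we, \ra\}$-reduct is by definition a member of the class of $\{\we, \ra\}$-subreducts of subresiduated lattices, the corollary witnesses $A$ as such a subreduct, which is the intended payoff for the section.
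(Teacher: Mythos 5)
Your proposal is correct and matches the paper's (implicit) argument: the corollary is stated without proof precisely because it is the conjunction of the preceding lemma ($j$ is an injective bounded-semilattice homomorphism) and the preceding proposition ($j(a\ra b)=j(a)\Ra j(b)$). Nothing further is needed.
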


On the other hand, it is immediate that every $\{\ra, \wedge\}$-subreducts of subresiduated lattices is an element of $\srs$ \cite[Theorem 1]{EH}. The following result follows from this and Corollary \ref{corSRS}.

\begin{thm}
	The variety $\srs$ is the class of $\{\ra, \wedge\}$-subreducts of subresiduated lattices.
\end{thm}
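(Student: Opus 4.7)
The plan is to establish the two inclusions separately. The nontrivial work has already been carried out in the preceding machinery of the section, so the statement reduces to assembling those pieces.

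For the inclusion $\{\ra,\we\}\text{-subreducts of }\srl\subseteq \srs$: I would observe that the defining equations of $\srs$ hold in every subresiduated lattice. Indeed, the semilattice axioms \textbf{(SL1)}--\textbf{(SL4)} hold because every $L\in\srl$ has a bounded (distributive) lattice reduct, so they hold in any $\{\we,\ra,1\}$-subreduct. For the implication axioms, \textbf{(SR2)} is exactly \textbf{(A2)}, \textbf{(SR3)} is \textbf{(A3)}, and \textbf{(SR4)} is \textbf{(A4)}. Finally, \textbf{(SR1)} follows because in any $L\in\srl$ we have $x\we y\leq y$, hence $(x\we y)\ra y=1$ by Lemma \ref{BAI}. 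Since the class $\srs$ is defined by these equations, the class of $\{\ra,\we\}$-subreducts of $\srl$ is contained in $\srs$; this is precisely what is attributed to \cite[Theorem 1]{EH} in the line just before the theorem.

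For the reverse inclusion $\srs\subseteq \{\ra,\we\}\text{-subreducts of }\srl$: given $A\in\srs$, Corollary \ref{corSRS} provides an embedding $j$ from $A$ into the $\{\we,\ra\}$-reduct of $(\Fil(A)^+,D)$. By the construction preceding the corollary, $(\Fil(A)^+,D)$ is a subresiduated lattice. Hence $j(A)$ is (the universe of) a subalgebra of the $\{\we,\ra\}$-reduct of a subresiduated lattice, which (recalling the footnote in Section \ref{subHilbert} on our convention for subreducts) means exactly that $A$ is a $\{\we,\ra\}$-subreduct of a subresiduated lattice.

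Combining both inclusions gives equality, and since $\srs$ is already defined to be a variety, the proof is complete. There is no real obstacle here: the genuine content lies in Theorem \ref{lemao}, the construction of $(\Fil(A)^+,D)$ as a subresiduated lattice, and Lemmas \ref{is3}--\ref{is4} used to show that $j$ preserves $\ra$; the present theorem is merely the packaging of those results together with the easy verification of the defining equations of $\srs$ in $\srl$.
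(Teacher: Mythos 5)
Your proposal is correct and follows essentially the same route as the paper: the forward inclusion is the direct verification that the $\srs$ equations hold in every subresiduated lattice (which the paper dispatches by citing \cite[Theorem 1]{EH}), and the reverse inclusion is exactly Corollary \ref{corSRS} applied to the subresiduated lattice $(\Fil(A)^+,D)$. Your explicit matching of \textbf{(SR1)}--\textbf{(SR4)} with \textbf{(A2)}--\textbf{(A4)} and Lemma \ref{BAI} just spells out what the paper leaves as ``immediate.''
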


\section{A bit of logic}
\label{logic}

The aim of this section is to propose an algebraizable propositional logic whose algebraic semantics is the class of subresiduated lattices. We also explore several reducts and variations of this logic and their algebraic semantics. 

Along this section, in order to simplify the notation, whenever the logic is clear from the context, we shall just write $\vdash$ for its entailment relation. When some confusion can arise, we shall indicate the logic with a suitable subindex.
\vskip.2cm
Let us begin by recalling the system $R4$ presented by a Hilbert style system in the language $\mathcal{L} = \{\ra, \vee, \wedge, \neg \}$, as it is done in \cite{EH}.

\subsection{The logic $R4$}
\label{R4}
In \cite{EH}, a Hilbert style system for Lewy calculus is proposed by Epstein and Horn. There, it is shown that the Lindembaun-Tarski algebra of this calculus is a subresiduated lattice. Hence, a motivation for defining subresiduated lattices is as an algebraic semantics for this calculus. In order to simplify future comparisons with another calculus introduced in this article, we summarize in this subsection the most relevant results of \cite{EH} concerning Lewy calculus.

\begin{defn}\label{logicR4}
	The $R4$ calculus is the calculus, in the intuitionistic language $\mathcal{L} = \{\ra, \vee, \wedge, \neg \}$, which can be presented by the following Hilbert style system.
	\vskip.2cm
	\noindent\textbf{Axiom schemes:}
	\vskip.2cm
	\begin{enumerate}
		\item[\textbf{(A1)}] $\alpha \to \alpha$,
		\item[\textbf{(A2)}] $(\alpha \ra \beta) \ra (\delta \ra (\alpha \ra \beta))$,
		\item[\textbf{(A3)}] $(\alpha \ra (\beta \ra \delta)) \ra ((\alpha \ra \beta) \ra (\alpha \ra \delta))$,
		\item[\textbf{(A4)}] $(\alpha \wedge \beta) \ra \alpha$,
		\item[\textbf{(A5)}] $(\alpha \wedge \beta) \ra \beta$,
		\item[\textbf{(A6)}] $(\delta \ra \alpha) \ra ((\delta \ra \beta) \ra (\delta \ra (\alpha \wedge \beta))$,
		\item[\textbf{(A7)}] $\alpha \ra (\alpha \vee \beta)$,
		\item[\textbf{(A8)}] $\beta \ra (\alpha \vee \beta)$,
		\item[\textbf{(A9)}] $(\alpha \ra \delta) \ra ((\beta \ra \delta) \ra ((\alpha \vee \beta) \ra \delta)$,
		\item[\textbf{(A10)}] $(\alpha \wedge (\beta \vee \delta)) \ra ((\alpha \wedge \beta) \vee (\alpha \wedge \delta))$,
		\item[\textbf{(A11)}] $\neg \alpha \ra (\alpha \ra \beta)$, \ \ and
		\item[\textbf{(A12)}] $(\alpha \ra \neg \alpha) \ra \neg \alpha$.
	\end{enumerate}
	\vskip.3cm
	
	\noindent\textbf{Rules:}
	\vskip.2cm
	\[
	\dfrac{\alpha, \ \alpha \ra \beta}{\beta}\ \ \mathbf{(MP)} 
	\]
	\vskip.2cm
\end{defn}

Write IR4 for the implicative fragment of R4; i.e., the calculus satisfying the axiom schemes \textbf{(A1)}-\textbf{(A3)} and $\mathbf{(MP)}$. The following weak version of the deduction theorem for $R4$ ($IR4$) noteworthy simplifies proofs.

\begin{lem}[\cite{EH}, page 202.]
	\label{DTdeIR4}
	Let $\alpha_1, \cdots, \alpha_n \vdash \beta$ and for $i = 1, \cdots, n-1$, $\alpha_i = \delta_i \ra \eta_i$ for some formulae $\delta_i, \eta_i$, then $\alpha_1, \cdots, \alpha_{n-1} \vdash \alpha_n \ra \beta$. In particular, if $\alpha \vdash \beta$, then $\vdash \alpha \ra \beta$.
\end{lem}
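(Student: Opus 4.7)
The plan is to proceed by induction on the length $k$ of a derivation of $\beta$ from the hypotheses $\alpha_1, \dots, \alpha_n$, with the target being $\alpha_1, \dots, \alpha_{n-1} \vdash \alpha_n \ra \beta$. The guiding idea is the same as in the standard deduction theorem for Hilbert calculi, except that axiom \textbf{(A2)} only lets us weaken formulas that are \emph{themselves implications}; this is exactly why the hypothesis restricts $\alpha_1, \dots, \alpha_{n-1}$ to be implications, while $\alpha_n$ is arbitrary.

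For the base case $k=1$, the formula $\beta$ is either an axiom of $R4$ or one of the hypotheses $\alpha_i$. First I would inspect the list \textbf{(A1)}--\textbf{(A12)} and observe that every axiom scheme of $R4$ is of the form $\varphi \ra \psi$; so if $\beta$ is an axiom, then \textbf{(A2)} gives $\vdash \beta \ra (\alpha_n \ra \beta)$, and one application of \textbf{(MP)} yields $\vdash \alpha_n \ra \beta$. If $\beta = \alpha_i$ for some $i<n$, then by assumption $\alpha_i$ is an implication $\delta_i \ra \eta_i$, so the same argument using \textbf{(A2)} delivers $\alpha_i \vdash \alpha_n \ra \alpha_i$. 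Finally, if $\beta = \alpha_n$, then $\vdash \alpha_n \ra \alpha_n$ by \textbf{(A1)} directly, and we are done.

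For the inductive step, suppose $\beta$ was obtained by $\mathbf{(MP)}$ from formulas $\gamma$ and $\gamma \ra \beta$ appearing earlier in the derivation. By the inductive hypothesis applied to each of these shorter derivations,
\[
\alpha_1, \dots, \alpha_{n-1} \vdash \alpha_n \ra \gamma
\quad \text{and} \quad
\alpha_1, \dots, \alpha_{n-1} \vdash \alpha_n \ra (\gamma \ra \beta).
\]
Instantiating axiom \textbf{(A3)} as $(\alpha_n \ra (\gamma \ra \beta)) \ra ((\alpha_n \ra \gamma) \ra (\alpha_n \ra \beta))$ and applying $\mathbf{(MP)}$ twice gives $\alpha_1, \dots, \alpha_{n-1} \vdash \alpha_n \ra \beta$, completing the induction. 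The particular case $\alpha \vdash \beta \Rightarrow \vdash \alpha \ra \beta$ follows immediately by taking $n=1$, since then the hypothesis about $\alpha_1,\dots,\alpha_{n-1}$ is vacuous.

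The only delicate point I anticipate is the base case when $\beta$ is an axiom or an $\alpha_i$ with $i<n$: the whole argument hinges on the fact that \textbf{(A2)} can be applied, which requires the formula being weakened to be an implication. The restriction on the $\alpha_i$'s and the syntactic shape of the axioms are tailored precisely so this always holds, so no real obstacle remains beyond this verification.
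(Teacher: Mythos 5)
Your proof is correct and is the standard deduction-theorem induction on the length of the derivation, which is exactly the argument in the cited source (the paper itself only cites \cite{EH} and gives no proof). The key observation that \textbf{(A2)} can replace the usual axiom $\varphi \ra (\psi \ra \varphi)$ precisely because every axiom of $R4$ and every $\alpha_i$ with $i<n$ is syntactically an implication is the right one, and the remaining cases (\textbf{(A1)} for $\beta=\alpha_n$, \textbf{(A3)} plus \textbf{(MP)} for the inductive step) are handled correctly.
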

\vskip.2cm

As an application of Lemma \ref{DTdeIR4}, we can give the following simple proof of the fact that the following is a valid scheme of $R4$ (in fact, of $IR4$):
$$\vdash (\alpha \ra \beta) \ra ((\beta \ra \delta) \ra (\alpha \ra \delta))$$
\vskip.2cm
Formally,

\begin{tabular}{lll}
	1. & $\alpha$ & by hypothesis \\
	2. & $\alpha \ra \beta$ & by hypothesis \\
	3. & $\beta \ra \delta$ & by hypothesis \\
	4. & $\beta$ & from 1. and 2. by \textbf{(MP)}\\
	5. & $\delta$ & from 3. and 4. by \textbf{(MP)}\\
\end{tabular}
\vskip.2cm
So, we have that $\{\alpha \ra \beta, \beta \ra \delta, \alpha\} \vdash \delta$. Applying Lemma \ref{DTdeIR4}, we get $\{\alpha \ra \beta, \beta \ra \delta\} \vdash \alpha \ra \delta$.

Finally, applying another two times Lemma \ref{DTdeIR4}, we get
$$\vdash (\alpha \ra \beta) \ra ((\beta \ra \delta) \ra (\alpha \ra \delta))$$

\vskip.2cm
Furthermore, since every axiom of $R4$ ($IR4$) is a hypothetical formula, the following derived rule holds in $R4$ ($IR4$):
\vskip.2cm
\noindent\textbf{Derived rule $\mathbf{(wT)}$:}
\vskip.2cm
\[
\dfrac{\alpha}{\beta \ra \alpha}, \textrm{ whenever $\vdash \alpha$.}
\]

\subsection{The logic $R4^\star$}
\label{subHilbertLogic}

Following the notation introduced when we were tackling sub-Hilbert algebras, we write
$\sq \alpha$ as a shorthand for $(\alpha \ra \alpha) \ra \alpha$, for any formula $\alpha$ in $\mathcal{L}$.
\vskip.3cm

\begin{defn}\label{logicR4s}
	The $R4^\star$ calculus is the calculus, in the language of $R4$, which can be presented by the following Hilbert style system.
\vskip.2cm

\noindent\textbf{Axiom schemes:}
\vskip.2cm
\begin{enumerate}
	\item[\textbf{(Ax1)}] $\alpha \to \alpha$,
	\item[\textbf{(Ax2)}] $(\alpha \ra \beta) \ra ((\beta \ra \delta) \ra (\alpha \ra \delta))$,
	\item[\textbf{(Ax3)}] $(\alpha \ra (\beta \ra \delta)) \ra ((\alpha \ra \beta) \ra (\alpha \ra \delta))$,
	\item[\textbf{(C1)}] $(\alpha \wedge \beta) \ra \alpha$,
	\item[\textbf{(C2)}] $(\alpha \wedge \beta) \ra \beta$,
	\item[\textbf{(C3)}] $(\delta \ra \alpha) \ra ((\delta \ra \beta) \ra (\delta \ra (\alpha \wedge \beta))$,
	\item[\textbf{(D1)}] $\alpha \ra (\alpha \vee \beta)$,
	\item[\textbf{(D2)}] $\beta \ra (\alpha \vee \beta)$,
	\item[\textbf{(D3)}] $(\alpha \ra \delta) \ra ((\beta \ra \delta) \ra ((\alpha \vee \beta) \ra \delta)$,
	\item[\textbf{(N1)}] $\neg \alpha \ra (\alpha \ra \beta)$, 
	\item[\textbf{(N2)}]  $(\alpha \ra \neg \alpha) \ra \neg \alpha$ \ \ and
	\item[\textbf{(Dist)}] $(\alpha \wedge (\beta \vee \delta)) \ra ((\alpha \wedge \beta) \vee (\alpha \wedge \delta))$.
\end{enumerate}
\vskip.3cm

\noindent\textbf{Rules:}
\vskip.2cm
\[
\dfrac{\alpha, \ \alpha \ra \beta}{\beta}\ \ \mathbf{(MP)} \ \ \ \ \ \ \ \ \ \ \ \ \ \ \ \ \ \ \ \ \dfrac{\alpha}{\beta \ra \alpha} \ \ \mathbf{(T)}
\]
\end{defn}
\vskip.2cm

Note that systems $R4$ and $R4^\star$ share all axiom schemes except \textbf{(A2)} and that systems $R4$ have a weaker version of rule $\mathbf{(T)}$.

Now, let us state some properties of this logic which will ease its comparison with the $R4$ calculus.

\begin{lem}\label{reglasderivadas}
Let $\alpha$, $\beta$ and $\delta$ be  arbitrary formulas in the language  $\mathcal{L}$. The following are derived rules for $R4^\star$:
\begin{enumerate}
	\item $\alpha \ra \beta, \beta \ra \delta \vdash \alpha \ra \delta$,
	\item $\vdash \alpha  \ra (\beta \ra \beta)$,
	\item $\vdash  ((\beta \ra \beta) \ra \delta) \ra (\alpha \ra \delta) $ \ \ and
	\item $\vdash (\alpha \ra \beta) \ra (\delta \ra (\alpha \ra \beta))$.
\end{enumerate}
\end{lem}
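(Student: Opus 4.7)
My plan is to derive the four statements in the order given, since each later one builds on the previous. The first three require only short chains of $\mathbf{(MP)}$ together with a single use of rule $\mathbf{(T)}$; the real work is concentrated in (4).

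For (1), I will start from the $\mathbf{(Ax2)}$-instance $(\alpha \ra \beta) \ra ((\beta \ra \delta) \ra (\alpha \ra \delta))$, discharge the hypothesis $\alpha \ra \beta$ via $\mathbf{(MP)}$, and then discharge $\beta \ra \delta$ via a second $\mathbf{(MP)}$. For (2), I will apply rule $\mathbf{(T)}$ to the $\mathbf{(Ax1)}$-instance $\vdash \beta \ra \beta$ to prepend an arbitrary antecedent $\alpha$. For (3), I will combine (2) with the $\mathbf{(Ax2)}$-instance whose middle term is $\beta \ra \beta$, namely
\[
(\alpha \ra (\beta \ra \beta)) \ra (((\beta \ra \beta) \ra \delta) \ra (\alpha \ra \delta)),
\]
and apply $\mathbf{(MP)}$ to strip off $\alpha \ra (\beta \ra \beta)$.

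The main obstacle is (4), whose derivation mirrors the algebraic proof of Lemma \ref{saleA2}. The key idea is to use $\beta \ra \beta$, the syntactic counterpart of the algebraic top element, as a middle term bridging $\alpha \ra \beta$ and $\delta \ra (\alpha \ra \beta)$. Specifically, the $\mathbf{(Ax2)}$-instance with both $Y$ and $Z$ set to $\beta$ is
\[
\vdash (\alpha \ra \beta) \ra ((\beta \ra \beta) \ra (\alpha \ra \beta)),
\]
while (3), used with its free variable $\delta$ instantiated to $\alpha \ra \beta$ and its free variable $\alpha$ renamed to $\delta$, gives
\[
\vdash ((\beta \ra \beta) \ra (\alpha \ra \beta)) \ra (\delta \ra (\alpha \ra \beta)).
\]
A single application of transitivity (item (1)) then combines these into (4). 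The only delicate point is recognising that $\beta \ra \beta$ can play the role of the middle term here; once that choice is made, the rest of the derivation is mechanical.
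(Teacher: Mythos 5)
Your proposal is correct and follows essentially the same route as the paper's proof: items (1)--(3) via the same $\mathbf{(Ax1)}$/$\mathbf{(Ax2)}$ instances with $\mathbf{(MP)}$ and rule $\mathbf{(T)}$, and item (4) by chaining the $\mathbf{(Ax2)}$-instance $(\alpha \ra \beta) \ra ((\beta \ra \beta) \ra (\alpha \ra \beta))$ with the appropriate instance of item (3) via the transitivity rule of item (1). Nothing is missing.
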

\begin{proof}
\item[1.] Assume that $\alpha \ra \beta$ and $\beta \ra \delta$. The following is a proof in $R4^\star$ of $\alpha \ra \delta$.
\vskip.2cm
\begin{tabular}{lll}
	1. & $\alpha \ra \beta$ & by hypothesis \\
	2. & $\beta \ra \delta$ & by hypothesis \\
	3. & $(\alpha \ra \beta) \ra ((\beta \ra \delta) \ra (\alpha \ra \delta))$ & by $\textbf{(Ax2)}$ \\
	4. & $(\beta \ra \delta) \ra (\alpha \ra \delta)$ & by $\textbf{(MP)}$ from 1. and 3.\\
	5. & $\alpha \ra \delta$ & by $\textbf{(MP)}$ from 2. and 4.\\
\end{tabular}
\vskip.2cm
Hence, we can conclude that $$\{\alpha \ra \beta, \beta \ra \delta\} \vdash \alpha \ra \delta$$
\vskip.2cm
\item[2.] It is an immediate consequence of $\textbf{(Ax1)}$ and rule $\textbf{(T)}$. Furthermore, we get that  $\alpha \ra \alpha \dashv \vdash \beta \ra \beta$, for every formulae $\alpha$ and $\beta$.
\vskip.2cm
\item[3.] Let $\alpha$, $\beta$ and $\delta$ be formulae, then,
\vskip.2cm
\begin{tabular}{lll}
	1. & $(\alpha \ra (\beta \ra \beta)) \ra (((\beta \ra \beta) \ra \delta) \ra (\alpha \ra \delta))$ & by \textbf{(Ax2)} \\
	2. & $\alpha \ra (\beta \ra \beta)$ & by item 2 \\
	3. & $((\beta \ra \beta) \ra \delta) \ra (\alpha \ra \delta)$ & by \textbf{(MP)} from 1. and 2. \\	
\end{tabular}
\vskip.2cm
\item[4.] Let $\alpha$, $\beta$ and $\delta$ be formulae, then,
\vskip.2cm
\begin{tabular}{lll}
	1. & $(\alpha \ra \beta) \ra ((\beta \ra \beta) \ra (\alpha \ra \beta))$ & by \textbf{(Ax2)} \\
	2. & $((\beta \ra \beta) \ra (\alpha \ra \beta)) \ra (\delta \ra (\alpha \ra \beta))$ & by item 3.\\
	3. & $(\alpha \ra \beta) \ra (\delta \ra (\alpha \ra \beta))$ & from 1. and 2. by item 1\\
\end{tabular}
\end{proof}

Note that the scheme $(\alpha \ra \beta) \ra (\delta \ra (\alpha \ra \beta))$
of item 4. of previous lemma is axiom scheme \textbf{(A2)} of logic $R4$. As a consequence, we get that every deduction in $R4$ is valid in $R4^\star$.
\vskip.2cm

On the other hand, since $\textbf{(Ax2)}$ is a theorem in the implicative fragment of R4\footnote{Equation (17) in page 202 of \cite{EH}; see also the paragraph after Lemma \ref{DTdeIR4}.}, every deduction in $R4^\star$ not involving the use of rule $\textbf{(T)}$ is valid in $R4$. Note that both logics share the same theorems.

So, as an immediate consequence of Lemma \ref{DTdeIR4} we get the following result.

\begin{cor}\label{TDparaL_1}
	Let $\alpha_1, \cdots, \alpha_n \vdash \beta$ be an entailment of $\beta$ whose proof  doesn't require the use of rule $\textbf{(T)}$ and suppose that for $i = 1, \cdots, n-1$, $\alpha_i = \delta_i \ra \eta_i$ for some formulae $\delta_i, \eta_i$, then $\alpha_1, \cdots, \alpha_{n-1} \vdash \alpha_n \ra \beta$. In particular, if $\alpha \vdash \beta$, and the proof of $\beta$ doesn't require the use of rule $\textbf{(T)}$, then $\vdash \alpha \ra \beta$.
\end{cor}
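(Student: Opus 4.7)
The plan is to exploit the close relationship between $R4$ and $R4^\star$ that has already been worked out in the paragraphs preceding the corollary, so as to reduce the statement directly to Lemma \ref{DTdeIR4}. The key bookkeeping observation is that the two systems prove exactly the same theorems: on the one hand, the only axiom scheme of $R4^\star$ that is not literally an axiom of $R4$ is $\textbf{(Ax2)}$, and this scheme is known to be provable in $IR4$ (as quoted from Epstein--Horn's equation~(17) in the paragraph preceding the corollary); on the other hand, Lemma \ref{reglasderivadas}(4) shows that the single axiom scheme $\textbf{(A2)}$ of $R4$ which is absent from $R4^\star$ is in fact derivable in $R4^\star$. Thus any instance of an axiom of one system can be replaced, inside a derivation, by a finite subproof in the other system.

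The argument then proceeds in three short steps. First, suppose we are given a derivation $\alpha_1,\dots,\alpha_n \vdash_{R4^\star} \beta$ that does \emph{not} invoke rule $\textbf{(T)}$. I would rewrite this as a derivation in $R4$ by keeping every use of $\textbf{(MP)}$ intact, keeping every use of a scheme shared by both calculi intact, and replacing each use of $\textbf{(Ax2)}$ by its finite $IR4$-proof. Since $\textbf{(T)}$ is by assumption never invoked, this substitution is local and yields a valid derivation $\alpha_1,\dots,\alpha_n \vdash_{R4} \beta$.

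Second, I would apply Lemma \ref{DTdeIR4} to this $R4$-derivation: the hypothesis that for $i=1,\dots,n-1$ the premise $\alpha_i$ is of the form $\delta_i \to \eta_i$ is exactly the hypothesis that Lemma \ref{DTdeIR4} requires, so the lemma yields
\[
\alpha_1,\dots,\alpha_{n-1} \vdash_{R4} \alpha_n \ra \beta.
\]
Third, I would transport this back to $R4^\star$: every deduction in $R4$ is valid in $R4^\star$ (by the comment made right after Lemma \ref{reglasderivadas}, since every axiom of $R4$ is provable in $R4^\star$), so $\alpha_1,\dots,\alpha_{n-1} \vdash_{R4^\star} \alpha_n \ra \beta$, as required. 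The special case $\alpha \vdash \beta$ (hence $\vdash \alpha \ra \beta$) is obtained by taking $n=1$, in which case the condition on the first $n-1$ premises is vacuous.

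There is no genuine obstacle here, since the corollary is really a translation argument; the only point that requires attention is the hypothesis that rule $\textbf{(T)}$ is not used, because $\textbf{(T)}$ is the distinctive inference rule of $R4^\star$ that does \emph{not} hold in $R4$, and it is precisely this rule whose absence allows the first step of the translation to go through.
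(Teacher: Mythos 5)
Your proposal is correct and follows essentially the same route as the paper: the paper likewise observes that, since \textbf{(Ax2)} is a theorem of $IR4$, every deduction in $R4^\star$ not using rule \textbf{(T)} is valid in $R4$, and then obtains the corollary as an immediate consequence of Lemma \ref{DTdeIR4}, transporting the result back via the fact that every deduction in $R4$ is valid in $R4^\star$. Your write-up merely makes the translation step more explicit than the paper does.
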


\subsection{$R4^\star$ is an implicative logic}
\label{implicativelogic}
\vskip.2cm
Let us now see that system $R4^\star$ is an implicative logic in the sense of Rasiowa\footnote{See Chapter 2 of \cite{F} for the definition and properties of implicative logics.}. 
\vskip.2cm

We start by seeing that the implicative fragment of $R4^\star$, which we shall refer as $IR4^\star$, is an implicative logic. Then we shall check the compatibility of the other connectives with respect to the natural congruence.
\vskip.2cm 
\noindent \textbf{(IL1)} By $\textbf{(Ax1)}$, $\vdash \alpha \ra \alpha$.
\vskip.2cm

\noindent \textbf{(IL2)} It is rule 1. of Lemma \ref{reglasderivadas}.
\vskip.2cm
\noindent \textbf{(IL3)} Recall that condition \textbf{(IL3)} ask for
\begin{equation*}
	\{\alpha \ra \beta, \beta \ra \alpha, \delta \ra \eta, \eta \ra \delta\} \vdash (\alpha \ra \delta) \ra (\beta \ra \eta).
\end{equation*}

In order to prove it we shall first see the validity of the following deductions in $IR4^\star$.
\vskip.2cm

\begin{tabular}{lll}
	\textbf{(IL3$_1$)} & $\{\alpha \ra \beta, \beta \ra \alpha\} \vdash (\alpha \ra \delta) \ra (\beta \ra \delta)$\\
	\textbf{(IL3$_2$)} & $\{\alpha \ra \beta, \beta \ra \alpha\} \vdash (\delta \ra \alpha) \ra (\delta \ra \beta)$\\
\end{tabular}
\vskip.2cm
Note that the roles of $\alpha$ and of $\beta$ are interchangeable in previous expressions. Hence, we have the following result.
\begin{lem}
	Assume that $IR4^\star$ satisfies \textbf{(IL3$_1$)} and \textbf{(IL3$_2$)}, then it satisfies \textbf{(IL3)}.
\end{lem}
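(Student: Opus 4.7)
The plan is to derive the target formula by chaining \textbf{(IL3$_1$)} and \textbf{(IL3$_2$)} through the transitivity of implication (rule 1 of Lemma \ref{reglasderivadas}), using the two pairs of hypotheses separately in each instance.

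First I would apply \textbf{(IL3$_1$)} to the hypotheses $\alpha \ra \beta$ and $\beta \ra \alpha$, taking the parameter in the scheme to be $\delta$. This yields
\[
\{\alpha \ra \beta, \beta \ra \alpha\} \vdash (\alpha \ra \delta) \ra (\beta \ra \delta).
\]
Next I would apply \textbf{(IL3$_2$)} to the hypotheses $\delta \ra \eta$ and $\eta \ra \delta$, taking the parameter playing the role of ``$\delta$'' in the scheme to be $\beta$. This yields
\[
\{\delta \ra \eta, \eta \ra \delta\} \vdash (\beta \ra \delta) \ra (\beta \ra \eta).
\]

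Finally, having both $(\alpha \ra \delta) \ra (\beta \ra \delta)$ and $(\beta \ra \delta) \ra (\beta \ra \eta)$ available under the combined set of four hypotheses, I would invoke transitivity of implication (item 1 of Lemma \ref{reglasderivadas}) to conclude
\[
\{\alpha \ra \beta, \beta \ra \alpha, \delta \ra \eta, \eta \ra \delta\} \vdash (\alpha \ra \delta) \ra (\beta \ra \eta),
\]
which is exactly \textbf{(IL3)}.

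There is no real obstacle here: the whole argument is a two-step composition, and the only subtlety is keeping track of which variable of the generic schemes \textbf{(IL3$_1$)} and \textbf{(IL3$_2$)} is instantiated at each stage (in the first application the ``free'' parameter is $\delta$, in the second it is $\beta$). The lemma is essentially noting that monotonicity of $\ra$ in the antecedent under provable equivalence (\textbf{(IL3$_1$)}) combined with monotonicity in the consequent under provable equivalence (\textbf{(IL3$_2$)}) yields joint monotonicity, which is exactly \textbf{(IL3)}.
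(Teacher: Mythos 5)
Your proposal is correct and follows exactly the same route as the paper: instantiate \textbf{(IL3$_1$)} to get $(\alpha \ra \delta) \ra (\beta \ra \delta)$, instantiate \textbf{(IL3$_2$)} to get $(\beta \ra \delta) \ra (\beta \ra \eta)$, and chain the two by transitivity (\textbf{(IL2)}). No gaps.
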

\begin{proof}
	Assume $\alpha \ra \beta$, $\beta \ra \alpha$, $\delta \ra \eta$ and $\eta \ra \delta$. By \textbf{(IL3$_1$)},
	\begin{equation}\label{il31}
		\{\alpha \ra \beta, \beta \ra \alpha\} \vdash (\alpha \ra \delta) \ra (\beta \ra \delta).
	\end{equation}
	By \textbf{(IL3$_2$)},
	\begin{equation}\label{il32}
		\{\delta \ra \eta, \eta \ra \delta\} \vdash (\beta \ra \delta) \ra (\beta \ra \eta).
	\end{equation}
	Hence, the following is a proof in $IR4^\star$:
	
	\begin{tabular}{lll}
		1. & $\alpha \ra \beta$ & by hypothesis \\
		2. & $\beta \ra \alpha$ & by hypothesis \\
		3. & $\delta \ra \eta$ & by hypothesis \\
		4. & $\eta \ra \delta$ & by hypothesis \\
		5. & $(\alpha \ra \delta) \ra (\beta \ra \delta)$ & from 1. and 2., applying \eqref{il31} \\
		6. & $(\beta \ra \delta) \ra (\beta \ra \eta)$ & from 3. and 4., applying \eqref{il32}\\
		7. & $(\alpha \ra \delta) \ra (\beta \ra \eta)$ & by \textbf{(IL2)}, from 5. and 6.\\
	\end{tabular}
	\vskip.2cm
	
	So, we conclude that
	\begin{equation*}\label{il3'}
		\{\alpha \ra \beta, \beta \ra \alpha, \delta \ra \eta, \eta \ra \delta\} \vdash (\alpha \ra \delta) \ra (\beta \ra \eta).
	\end{equation*}
\end{proof}

Hence, it suffices to give proofs of \textbf{(IL3$_1$)} and \textbf{(IL3$_2$)} in $IR4^\star$.
\vskip.2cm
\noindent Let us start with \textbf{(IL3$_1$)}:\vskip.2cm

\begin{tabular}{lll}
	1. & $\alpha \ra \beta$ & by hypothesis \\
	2. & $\beta \ra \alpha$ & by hypothesis \\
	3. & $(\beta \ra \alpha) \ra ((\alpha \ra \delta) \ra (\beta \ra \delta))$ & \textbf{(Ax2)}\\
	4. & $(\alpha \ra \delta) \ra (\beta \ra \delta)$ & by \textbf{(MP)} from 2. and 3.
\end{tabular}
\vskip.2cm

Hence, it holds \textbf{(IL3$_1$)}.
\vskip.2cm

\noindent Let us now prove \textbf{(IL3$_2$)}:\vskip.2cm

\begin{tabular}{lll}
	1. & $\delta \ra \eta$ & by hypothesis \\
	2. & $\eta \ra \delta$ & by hypothesis \\
	3. & $\beta \ra (\delta \ra \eta)$ & from 1., by rule \textbf{(T)}\\
	4. & $(\beta \ra (\delta \ra \eta)) \ra ((\beta \ra \delta) \ra (\beta \ra \eta))$ & \textbf{(Ax3)}\\
	7. & $(\beta \ra \delta) \ra (\beta \ra \eta)$ & by \textbf{(MP)} from 3. and 4.\\
\end{tabular}
\vskip.2cm

Then, \textbf{(IL3$_2$)} also holds.

\vskip.2cm
\noindent \textbf{(IL4)} Is Modus Ponens.
\vskip.2cm

\vskip.2cm
\noindent \textbf{(IL5)} Is rule \textbf{(T)}.
\vskip.2cm

Since system $IR4^\star$ satisfies properties \textbf{(IL1)} to \textbf{(IL5)}, it is an implicative logic.
\vskip.2cm

Let us now check that the expansion of $IR4^\star$ with the other connectives remains an implicative logic. We have to check that the following properties are provable in $R4^\star$

\begin{enumerate}
	\item[\textbf{(IL3$_\wedge$)}] $\{\alpha \ra \beta, \beta \ra \alpha\} \vdash (\alpha \wedge \delta) \ra (\beta \wedge \delta)$.
	\item[\textbf{(IL3$_\neg$)}] $\{\alpha \ra \beta, \beta \ra \alpha\} \vdash \neg \alpha  \ra \neg \beta$.
	\item[\textbf{(IL3$_\vee$})] $\{\alpha \ra \beta, \beta \ra \alpha\} \vdash (\alpha \vee \delta) \ra (\beta \vee \delta)$.
\end{enumerate}
\vskip.2cm
Let us start giving a proof of \textbf{(IL3$_\wedge$)}. Let $\alpha$, $\beta$ and $\delta$ be formulas of $R4^\star$ and assume that $\alpha \ra \beta$, $\beta \ra \alpha$. Then,

\vskip.2cm
\begin{tabular}{lll}
	1. & $\alpha \ra \beta$ & by hypothesis \\
	2. & $\beta \ra \alpha$ & by hypothesis \\
	3. & $(\alpha \wedge \delta) \ra \alpha$ & \textbf{(C1)}\\
	4. & $(\alpha \wedge \delta) \ra \beta$ & from 1. and 3. by \textbf{(IL2)}\\
	5. & $(\alpha \wedge \delta) \ra \delta$ & \textbf{(C2)}\\
	6. & $((\alpha \wedge \delta) \ra \beta) \ra \left(((\alpha \wedge \delta) \ra \delta) \ra ((\alpha \wedge \delta) \ra (\beta \wedge \delta)) \right) $ & \textbf{(C3)}\\
	7. & $((\alpha \wedge \delta) \ra \delta) \ra ((\alpha \wedge \delta) \ra (\beta \wedge \delta))$ & by \textbf{(MP)}, from 4. and 6.\\
	8. & $(\alpha \wedge \delta) \ra (\beta \wedge \delta)$ & by \textbf{(MP)}, from 5. and 7.\\
\end{tabular}
\vskip.2cm

Let us now check \textbf{(IL3$_\neg$)}. Let $\alpha$ and $\beta$ be formulas in $R4^\star$ and assume that $\alpha \ra \beta$, $\beta \ra \alpha$. The following is a proof in $R4^\star$.
\vskip.2cm
\begin{tabular}{lll}
	1. & $\beta \ra \alpha$ & by hypothesis \\
	2. & $(\beta \ra \alpha) \ra \left((\alpha \ra \neg \beta) \ra (\beta \ra \neg \beta))\right) $ & \textbf{(Ax2)} \\
	3. & $(\alpha \ra \neg \beta) \ra (\beta \ra \neg \beta)$ & by \textbf{(MP)}, from 1. and 2.\\
	4. & $\neg \alpha \ra (\alpha \ra \neg \beta)$ & \textbf{(N1)}\\
	5. & $\neg \alpha \ra (\beta \ra \neg \beta)$ & from 4. and 3. by \textbf{(IL2)}\\
	6. & $(\beta \ra \neg \beta) \ra \neg \beta$ & \textbf{(N2)}\\
	7. & $\neg \alpha \ra \neg \beta$ & from 5. and 6. by \textbf{(IL2)}\\
\end{tabular}
\vskip.2cm

Let us finally check \textbf{(IL3$_\vee$)}. For $\alpha$, $\beta$ and $\delta$ formulas in $R4^\star$, assume that $\alpha \ra \beta$ and $\beta \ra \alpha$. The following is a proof in $R4^\star$.
\vskip.2cm
\begin{tabular}{lll}
	1. & $\beta \ra \alpha$ & by hypothesis \\
	2. & $\alpha \ra (\alpha \vee \delta)$ & \textbf{(D1)}\\
	3. & $\beta \ra (\alpha \vee \delta)$ & from 1. and 2. by \textbf{(IL2)}\\
	4. & $\delta \ra (\alpha \vee \delta)$ & \textbf{(D2)}\\
	5. & $(\beta \ra (\alpha \vee \delta)) \ra ((\delta \ra (\alpha \vee \delta)) \ra ((\beta \vee \delta) \ra (\alpha \vee \delta)))$ & \textbf{(D3)}\\
	6. & $(\delta \ra (\alpha \vee \delta)) \ra ((\beta \vee \delta) \ra (\alpha \vee \delta))$  & by \textbf{(MP)} from 3. and 5.\\
	7. & $(\beta \vee \delta) \ra (\alpha \vee \delta)$ & by \textbf{(MP)} from 4. and 6.\\
\end{tabular}
\vskip.2cm
Note that in each of the proofs of \textbf{(IL3$_\wedge$)}, \textbf{(IL3$_\neg$)} and \textbf{(IL3$_\vee$)} only the axioms \textbf{(Ax1)}-\textbf{(Ax3)}, rules \textbf{(MP)} and \textbf{(T)} and the axioms for corresponding connective are used. Hence, each of the fragments of $R4^\star$ containing $\ra$ and any other combination of the remaining connectives (including none) will still be implicative.
\vskip.2cm

\subsection{An algebraic semantics for $R4^\star$}
\label{algebraicsemantics}
\vskip.2cm

In previous subsection we have shown that $R4^\star$ is an algebraic logic in the sense of \cite{F}. Hence it has an equivalent algebraic semantics in the sense of Block and Pigozzi. The following metaproperty follows from \cite[Theorem 2.9]{F}.

\begin{thm}[Completeness Theorem]\label{completness}
	Logic $R4^\star$ is complete with respect to the class $Alg_{R4^\star}^\ast$ defined below.
\end{thm}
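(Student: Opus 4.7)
The plan is to appeal to the general algebraization machinery of \cite[Theorem~2.9]{F}. In the previous subsection we have verified that $R4^\star$ satisfies the conditions (IL1)--(IL5) together with the compatibility conditions (IL3$_\wedge$), (IL3$_\vee$), (IL3$_\neg$); that is, $R4^\star$ is an implicative logic in the sense of Rasiowa, with $\{x\ra y,\; y\ra x\}$ as the set of equivalence formulas and $x\approx x\ra x$ as defining equation. The class $Alg_{R4^\star}^\ast$ will then be the quasivariety obtained by translating each axiom scheme $\varphi$ of $R4^\star$ to the equation $\varphi\approx x\ra x$ and each inference rule $\Gamma/\varphi$ to the quasi-equation $\bigwedge_{\gamma\in\Gamma}(\gamma\approx x\ra x)\Rightarrow(\varphi\approx x\ra x)$.

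Soundness of $\vdash_{R4^\star}$ with respect to $Alg_{R4^\star}^\ast$ is then immediate by induction on the length of a proof: the axioms translate to identities of $Alg_{R4^\star}^\ast$ by construction, \textbf{(MP)} translates to the quasi-equation $(\alpha\approx x\ra x\;\wedge\;\alpha\ra\beta\approx x\ra x)\Rightarrow(\beta\approx x\ra x)$, and rule \textbf{(T)} is handled analogously, the hypothesis $\alpha$ collapsing to the designated element in any valuation.

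For completeness I would run a Lindenbaum--Tarski argument. Given a theory $T$ of $R4^\star$, define $\alpha\equiv_T\beta$ iff $T\vdash\alpha\ra\beta$ and $T\vdash\beta\ra\alpha$. The verifications of (IL3), (IL3$_\wedge$), (IL3$_\vee$) and (IL3$_\neg$) in Subsection~\ref{implicativelogic} guarantee that $\equiv_T$ is a congruence on the formula algebra $\mathbf{Fm}$ over $\mathcal{L}$. The quotient $\mathbf{Fm}/{\equiv_T}$ lies in $Alg_{R4^\star}^\ast$: each axiom scheme is a theorem of $R4^\star$, so it collapses to the class $[\alpha\ra\alpha]_T$ in the quotient, while each rule lifts via the canonical valuation $[\,\cdot\,]_T\colon\mathbf{Fm}\to\mathbf{Fm}/{\equiv_T}$. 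Since by construction $T\vdash\alpha$ iff $[\alpha]_T=[\alpha\ra\alpha]_T$, taking $T$ as the deductive closure of $\Gamma$ and evaluating in the Lindenbaum--Tarski algebra yields the desired converse: whenever $\tau(\Gamma)\vDash_{Alg_{R4^\star}^\ast}\tau(\alpha)$ we must have $\Gamma\vdash_{R4^\star}\alpha$.

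The main step is really just assembling the pieces already in place; the substantive work was the verification of the implicative-logic conditions in Subsection~\ref{implicativelogic}, and what remains is the standard Blok--Pigozzi bookkeeping, so no genuine obstacle appears beyond spelling out the quasi-equations that axiomatise $Alg_{R4^\star}^\ast$.
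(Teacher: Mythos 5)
Your proposal is correct and takes essentially the same route as the paper: the paper simply invokes \cite[Theorem 2.9]{F} on the strength of the verification in Subsection \ref{implicativelogic} that $R4^\star$ is an implicative logic, and your soundness induction and Lindenbaum--Tarski construction are just the standard content of that cited theorem spelled out explicitly.
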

Note that $Alg_{R4^\star}^\ast$ is the class of algebras of type (2,2,2,1,0) in the language $\{ \ra, \wedge, \vee, \neg, 1\}$ defined by the following quasi-identities:
\begin{enumerate}
	\item[\textbf{(I)}] $x \to x = 1$,
	\item[\textbf{(B)}] $(x \ra y) \ra ((y \ra z) \ra (x \ra z)) = 1$,
	\item[\textbf{(S)}] $(x \ra (y \ra z)) \ra ((x \ra y) \ra (x \ra z)) = 1$,
	\item[\textbf{(T)}] $x \to 1 = 1$,
	\item[\textbf{(A)}] if $x \to y = 1$ and $y \to x = 1$ then $x = y$,	
	\item[\textbf{(EC1)}] $(x \wedge y) \ra x = 1$,
	\item[\textbf{(EC2)}] $(x \wedge y) \ra y = 1$,
	\item[\textbf{(EC3)}] $(z \ra x) \ra ((z \ra y) \ra (z \ra (x \wedge y)) = 1$,
	\item[\textbf{(ED1)}] $x \ra (x \vee y) = 1$,
	\item[\textbf{(ED2)}] $y \ra (x \vee y) = 1$, 
	\item[\textbf{(ED3)}] $(x \ra z) \ra ((y \ra z) \ra ((x \vee y) \ra z) = 1$,
	\item[\textbf{(N1)}] $\neg x \ra (x \ra y) = 1$,
	\item[\textbf{(N2)}] $(x \ra \neg x) \ra \neg x = 1$ \ \ and
	\item[\textbf{(Dist)}] $(x \wedge (y \vee z)) \ra ((x \wedge y) \vee (x \wedge z)) = 1$.
\end{enumerate}

\begin{rem}
 Note that quasi-identity \textbf{(A)} is interchangeble with 
 \begin{enumerate}
  \item[\textbf{(MP)}] if $1 \to x = 1$ then $x = 1$.	
 \end{enumerate}
\end{rem}

Let us first note that the class of the implicative reducts of these algebras coincides with the quasivariety $\sha$ of sub-Hilbert Algebras, defined in Section \ref{subHilbert}.
\vskip.3cm
\begin{lem} \label{eqsha}
$Alg_{IR4^\star}^\ast = \sha$.
\end{lem}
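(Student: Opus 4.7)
The plan is to identify $Alg_{IR4^\star}^\ast$ by computing its quasi-equational basis directly from the axioms and rules of $IR4^\star$ and to observe that this basis coincides with the list of quasi-identities defining $\sha$ in Definition \ref{sha}. Since $IR4^\star$ has been verified in Subsection \ref{implicativelogic} to be an implicative logic with $x \ra y$ as equivalence formula, the algebraization underlying Theorem \ref{completness} applies to its implicative fragment and produces $Alg_{IR4^\star}^\ast$ as the quasivariety whose defining quasi-identities are obtained by translating each axiom scheme $\varphi$ as $\varphi = 1$, and each rule $\alpha_1, \dots, \alpha_n / \beta$ as the quasi-identity ``if $\alpha_1 = 1$ and $\dots$ and $\alpha_n = 1$ then $\beta = 1$'', together with the antisymmetry quasi-identity coming from the implicative structure.

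Carrying this out, the three axiom schemes (Ax1), (Ax2), (Ax3) translate to the identities (I) $x \ra x = 1$, (B) $(x \ra y) \ra ((y \ra z) \ra (x \ra z)) = 1$, and (S) $(x \ra (y \ra z)) \ra ((x \ra y) \ra (x \ra z)) = 1$. Rule (T) of the calculus translates to ``$x = 1$ implies $y \ra x = 1$''; instantiating $x := 1$ this collapses to the identity (T) $y \ra 1 = 1$, and conversely (T) at once implies the quasi-form. Rule (MP) combined with the choice of equivalence formula yields the antisymmetry quasi-identity (A): if $x \ra y = 1$ and $y \ra x = 1$ then $x = y$. These five axioms are exactly those of Definition \ref{sha}, so $Alg_{IR4^\star}^\ast = \sha$.

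The only step requiring any care is to confirm the equivalence between the rule-form of (T) and the identity (T), but this is immediate as sketched. Once this is settled, the identification $Alg_{IR4^\star}^\ast = \sha$ follows by direct comparison of defining axioms, and the quasi-identities corresponding to the remaining connectives $\we$, $\vee$, $\neg$ of the larger system $Alg_{R4^\star}^\ast$ play no role since they lie outside the implicative signature.
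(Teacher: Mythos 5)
Your overall strategy is the same as the paper's: invoke the algebraization theorem for implicative logics (Theorem \ref{completness}, via \cite{F}) to read off a quasi-equational basis for $Alg_{IR4^\star}^\ast$ from the axioms and rules of $IR4^\star$, and then compare it with Definition \ref{sha}. The translation of \textbf{(Ax1)}--\textbf{(Ax3)} into \textbf{(I)}, \textbf{(B)}, \textbf{(S)}, and of rule \textbf{(T)} into the identity $x \ra 1 = 1$, is handled exactly as in the paper.

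The one step that is not right as stated is your treatment of \textbf{(MP)}. The general procedure translates \textbf{(MP)} into its own quasi-identity, ``if $x = 1$ and $x \ra y = 1$ then $y = 1$'' (equivalently, ``if $1 \ra y = 1$ then $y = 1$''); it does not ``yield'' antisymmetry. The quasi-identity \textbf{(A)} enters separately, from the fact that the algebras in $Alg_{IR4^\star}^\ast$ are the \emph{reduced} models of the logic (the paper cites \cite[Proposition 2.15]{F} for this). Consequently the basis produced by the machinery is \textbf{(I)}, \textbf{(B)}, \textbf{(S)}, \textbf{(T)}, \textbf{(A)} \emph{together with} the \textbf{(MP)} quasi-identity, and the equality $Alg_{IR4^\star}^\ast = \sha$ is not obtained by ``direct comparison of defining axioms'': for the inclusion $\sha \subseteq Alg_{IR4^\star}^\ast$ one must still verify that every sub-Hilbert algebra satisfies the \textbf{(MP)} quasi-identity. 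This is a one-line argument --- if $1 \ra x = 1$, then $x \ra 1 = 1$ by \textbf{(T)}, hence $x = 1$ by \textbf{(A)} --- but it is precisely the content of the second half of the paper's proof and should not be omitted. With that verification added, your argument coincides with the paper's.
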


\begin{proof}
It follows from \cite[Proposition 2.7]{F} that $Alg_{IR4^\star}^\ast$ is the quasivariety
whose algebras satisfy $\mathbf{(I)}$, $\mathbf{(B)}$, $\mathbf{(S)}$ and $\mathbf{(T)}$
of Definition \ref{sha} with the following quasi-equation: if $1\ra x = 1$ then $x=1$. Moreover,
it follows from \cite[Proposition 2.15]{F} that every algebra of $Alg_{IR4^\star}^\ast$ is an implicative
algebra, so every algebra of $Alg_{IR4^\star}^\ast$ satisfies $\mathbf{(A)}$ of Definition \ref{sha}. Hence, $Alg_{IR4^\star}^\ast \subseteq \sha$.
Conversely, let $A\in \sha$. In order to show that $A\in Alg_{IR4^\star}^\ast$, let $x\in A$ such that $1\ra x = 1$.
Since by $\mathbf{(T)}$, $x\ra 1 = 1$, then it follows from $\mathbf{(A)}$ that $x = 1$. Thus, $A\in \sha$
and in consequence $\sha \subseteq  Alg_{IR4^\star}^\ast$. Therefore, $Alg_{IR4^\star}^\ast = \sha$.
\end{proof}

As an immediate consequence of Theorem \ref{completness} and Lemma \ref{eqsha} we get the following result.

\begin{cor}
	For every $\Gamma \cup \{\alpha\} \subseteq \mathrm{Fml_{L_1}}$, 	
	$\Gamma \vdash \alpha$ if and only if for every $A \in \sha$ and every $h \in \mathrm{Hom}(\mathrm{Fml_{L_1}},A)$, $h(\Gamma) \subseteq \{1\}$ implies $h(\alpha) = 1$.
\end{cor}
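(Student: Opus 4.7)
The plan is to derive this corollary as a direct consequence of the general completeness theory for implicative logics, specialised to the implicative fragment $L_1 = IR4^\star$.

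First I would observe that the verification carried out in Subsection \ref{implicativelogic} for $R4^\star$ already establishes that its implicative fragment $IR4^\star$ satisfies the conditions \textbf{(IL1)}--\textbf{(IL5)} of Rasiowa (the proofs of \textbf{(IL1)}, \textbf{(IL2)}, \textbf{(IL3$_1$)}, \textbf{(IL3$_2$)}, \textbf{(IL4)} and \textbf{(IL5)} use only axioms \textbf{(Ax1)}--\textbf{(Ax3)} and the rules \textbf{(MP)} and \textbf{(T)}). Consequently $L_1$ is itself an implicative logic in the sense of Rasiowa, so by the Block--Pigozzi theory (cf.\ \cite[Theorem 2.9]{F}) it is algebraizable and complete with respect to its equivalent quasivariety semantics $Alg_{IR4^\star}^\ast$.

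Next, I would invoke Lemma \ref{eqsha}, which identifies the class $Alg_{IR4^\star}^\ast$ with the quasivariety $\sha$ of sub-Hilbert algebras. Combining this identification with the abstract completeness statement above yields, for every $\Gamma \cup \{\alpha\} \subseteq \mathrm{Fml}_{L_1}$,
\[
\Gamma \vdash_{L_1} \alpha \iff (\forall A \in \sha)\,(\forall h \in \mathrm{Hom}(\mathrm{Fml}_{L_1},A))\bigl(h(\Gamma)\subseteq\{1\} \Rightarrow h(\alpha)=1\bigr),
\]
which is exactly the claim. The soundness direction (left-to-right) is a routine induction on the length of the derivation: each axiom scheme of $L_1$ is easily checked to be valid in every sub-Hilbert algebra (using \textbf{(I)}, \textbf{(B)}, \textbf{(S)}), and both \textbf{(MP)} and \textbf{(T)} preserve the designated value $1$ under any homomorphism $h$ (the latter because $x \to 1 = 1$ holds in $\sha$).

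The only non-trivial ingredient is the completeness direction, which is subsumed by the Block--Pigozzi/Rasiowa machinery: one forms the Lindenbaum--Tarski algebra of $L_1$ modulo the congruence $\alpha \equiv \beta \iff {\vdash \alpha \to \beta}\ \text{and}\ {\vdash \beta \to \alpha}$, verifies that the quotient lies in $\sha$ (this uses again \textbf{(A)}, together with the already proved theoremhood of \textbf{(B)}, \textbf{(I)}, \textbf{(S)}, \textbf{(T)}), and lifts any putative counter-assignment to a countermodel. The main obstacle, if any, is therefore purely bookkeeping: making sure that the particular formulation of the quasivariety $Alg_{IR4^\star}^\ast$ arising from the general theory really coincides with Definition \ref{sha}, and this is exactly what Lemma \ref{eqsha} records.
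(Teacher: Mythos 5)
Your proposal is correct and follows essentially the same route as the paper, which derives the corollary as an immediate consequence of the completeness theorem for the implicative (Rasiowa-style) logic $IR4^\star$ together with Lemma \ref{eqsha} identifying $Alg_{IR4^\star}^\ast$ with $\sha$. The extra detail you supply on soundness and the Lindenbaum--Tarski construction is just an unpacking of the general machinery the paper invokes via \cite[Theorem 2.9]{F}.
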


Note that the following scheme, which we shall name \textbf{(C4)}, also holds in $R4^\star$.
\begin{equation*}\label{C4}
	\vdash (\alpha \wedge (\alpha \ra \beta)) \ra \beta
\end{equation*}
In order to prove this, we give a proof of $(\alpha \wedge (\alpha \ra \beta)) \vdash \beta$ in $R4^\star$ and apply Corollary \ref{TDparaL_1}:
\vskip.2cm
\begin{tabular}{lll}
	1. & $\alpha \wedge (\alpha \ra \beta)$ & by hypothesis\\
	2. & $(\alpha \wedge (\alpha \ra \beta)) \ra \alpha$ & by \textbf{(C1)} \\
	3. & $(\alpha \wedge (\alpha \ra \beta)) \ra (\alpha \ra \beta)$ & by \textbf{(C2)} \\
	4. & $\alpha \ra \beta$ & from 1. and 3., by \textbf{(MP)}\\
	5. & $\alpha$ & from 1. and 2., by \textbf{(MP)}\\
	6. & $\beta$ & from 5. and 4., by \textbf{(MP)}\\
\end{tabular}
\vskip.2cm

Since \textbf{(C4)} holds in $R4^\star$ and $Alg_{R4^\star}^\ast$ is the algebraic semantics of this logic, we have that the following equation holds in  $Alg_{R4^\star}^\ast$. We label it in order to simplify its future reference.
\begin{enumerate}
	\item[\textbf{(EC4)}] $(x \wedge (x \ra y)) \ra y = 1$.
\end{enumerate}

\begin{lem}\label{SRSessemiret}
	Let $A \in Alg_{R4^\star}^\ast$. Then $(A, \we, 1)$ is a bounded semilattice.
\end{lem}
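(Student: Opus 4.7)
The strategy is to establish that, in any $A \in Alg_{R4^\star}^\ast$, the operation $\wedge$ coincides with the meet of the natural order $a \leq b$ iff $a \ra b = 1$; the four semilattice laws \textbf{(SL1)}--\textbf{(SL4)} are then immediate from the universal property of meets together with antisymmetry.

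First I would observe that the $\{\ra,1\}$-reduct of $A$ satisfies \textbf{(I)}, \textbf{(B)}, \textbf{(S)}, \textbf{(T)} and \textbf{(A)}, hence by Lemma \ref{eqsha} it lies in $\sha$. Consequently $\leq$ is a partial order with $1$ as greatest element (because $u \ra 1 = 1$ by \textbf{(T)}). The axioms \textbf{(EC1)} and \textbf{(EC2)} directly translate to $x \wedge y \leq x$ and $x \wedge y \leq y$, so $x \wedge y$ is a lower bound of $\{x,y\}$.

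The main step, and the one where the real work sits, is to verify the universal property: if $z \leq x$ and $z \leq y$, then $z \leq x \wedge y$. From $z \ra x = z \ra y = 1$ and axiom \textbf{(EC3)} we get
\[
(z \ra x) \ra \bigl((z \ra y) \ra (z \ra (x \wedge y))\bigr) = 1.
\]
Substituting $z \ra x = 1$ yields $1 \ra \bigl((z \ra y) \ra (z \ra (x \wedge y))\bigr) = 1$. Since every element is $\leq 1$ by \textbf{(T)}, antisymmetry \textbf{(A)} forces $(z \ra y) \ra (z \ra (x \wedge y)) = 1$; substituting $z \ra y = 1$ and repeating the same trick produces $z \ra (x \wedge y) = 1$, i.e., $z \leq x \wedge y$. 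This small internal modus-ponens-style argument, turning the equation of \textbf{(EC3)} into actual transitivity of lower-boundedness by means of \textbf{(A)} and \textbf{(T)}, is the crux of the proof.

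With the universal property in hand, the remaining identities are routine: idempotence \textbf{(SL3)} by putting $z = x$ in the universal property; commutativity \textbf{(SL2)} and associativity \textbf{(SL1)} because both sides of each identity are characterized as the greatest lower bound of the same (multi)set, and such meets are unique by antisymmetry; and \textbf{(SL4)} from $x \leq x$, $x \leq 1$ (via \textbf{(T)}), and \textbf{(EC1)}.
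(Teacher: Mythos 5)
Your proof is correct and follows essentially the same route as the paper: \textbf{(EC1)}/\textbf{(EC2)} give that $x \wedge y$ is a lower bound, \textbf{(EC3)} plus the hypotheses $z \ra x = z \ra y = 1$ give the universal property, and \textbf{(T)} gives the top element. The only cosmetic difference is in discharging the outer implications: you apply the quasi-equation ``$1 \ra w = 1$ and $w \ra 1 = 1$ imply $w = 1$'' twice, whereas the paper collapses $1 \ra (1 \ra (z \ra (x \wedge y)))$ in one step using $\sq^2 w \leq w$; both are valid and equivalent in force.
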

\begin{proof}
	Let $\leq$ be the natural order of $A$, given by implication. By \textbf{(EC1)}, $x \wedge y \leq x$ and by \textbf{(EC2)}, $x \wedge y \leq y$.
	
	Assume that $z \leq x, y$. Then, $z \ra x = z \ra y = 1$. By \textbf{(EC3)}, we have that $1 \ra (1 \ra (z \ra (x \wedge y))) = 1$. However, by \textbf{(EC4)}, $1 \ra (1 \ra (z \ra (x \wedge y))) \leq z \ra (x \wedge y)$ and, in consequence, $z \leq x \wedge y$; i.e., $x \wedge y = \inf\{x, y\}$.
	
	Furthermore, by \textbf{(T)}, we have that for every $x \in A$, $x \leq 1$.
\end{proof}
Hence, we have the following result.

\begin{lem}\label{baseparaSRS}
	Variety $\srs$ is the equivalent algebraic semantics of the $\{\ra, \wedge\}$-fragment of the logic $R4^\star$.
\end{lem}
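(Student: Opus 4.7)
The plan is to use the standard algebraization machinery for implicative logics to identify the equivalent algebraic semantics of the $\{\ra,\wedge\}$-fragment of $R4^\star$, and then to show by mutual inclusion that this quasivariety coincides with $\srs$. First I would observe that the $\{\ra,\wedge\}$-fragment of $R4^\star$ is itself an implicative logic, because the verifications in Subsection \ref{implicativelogic}, when restricted to this sublanguage, only invoke axioms (Ax1)--(Ax3), (C1)--(C3) and rules (MP), (T), all of which are in the fragment. By \cite[Theorem 2.9]{F}, the equivalent algebraic semantics is therefore the quasivariety $\mathcal{Q}$ of algebras of type $(2,2,0)$ defined by the equations (I), (B), (S), (T), (EC1), (EC2), (EC3) together with the quasi-equation (A). The goal is to prove $\mathcal{Q}=\srs$.

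For $\mathcal{Q}\subseteq\srs$, let $A\in\mathcal{Q}$. The semilattice laws (SL1)--(SL4) follow by the argument of Lemma \ref{SRSessemiret}, which uses only the implicative fragment axioms together with (EC1)--(EC4) (note that (EC4) is derivable in the fragment, as shown in the derivation of \textbf{(C4)} just above that lemma). The axiom (SR1) is exactly (EC2), while (SR3) is just (EC4). For (SR2), the $\{\ra\}$-reduct of $A$ lies in $\sha$ by Lemma \ref{eqsha}, so (SR2) is immediate from Lemma \ref{saleA2}. Finally, (SR4) splits into two inequalities: the $\leq$-direction comes from the monotonicity property M2 of Lemma \ref{monotonia} applied to $x\we y\leq x$ and $x\we y\leq y$ (via (EC1)--(EC2)), whereas the $\geq$-direction follows from (EC3) by currying, using (SR3) to convert ``$a\leq b\ra c$'' into ``$a\we b\leq c$''.

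For the converse inclusion $\srs\subseteq\mathcal{Q}$, let $A\in\srs$. The conditions (I), (T) and the quasi-equation (A) are immediate from item 3 of Lemma \ref{lemant}. Equation (EC1) follows from that same item applied to $x\we y\leq x$, (EC2) is (SR1) itself, and (EC3) is a rewriting of (SR4) once one uses items 3 and 5 of Lemma \ref{lemant} together with the SRS-pair characterization of Theorem \ref{lemao}. For (B), Lemma \ref{lemant}, item 2, yields $(x\ra y)\we(y\ra z)\leq x\ra z$; since $x\ra y, y\ra z\in \sq A$ by item 5, the SRS-pair description translates this inequality into the equation $(x\ra y)\ra((y\ra z)\ra(x\ra z))=1$. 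The equation (S) is handled analogously, starting from
\[
(x\ra(y\ra z))\we(x\ra y)=x\ra((y\ra z)\we y)\leq x\ra z,
\]
which holds in $\srs$ by (SR4) and (SR3).

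The main bookkeeping difficulty I anticipate is systematically translating between inequalities of the form $a\we b\leq c$ (with $a\in\sq A$) and equations $a\ra(b\ra c)=1$; this passage is required whenever (B), (S) and (EC3) are verified in $\srs$, and symmetrically when (SR4) is checked inside $\mathcal{Q}$. Once that dictionary is handled uniformly using Theorem \ref{lemao} and items 3 and 5 of Lemma \ref{lemant}, the remaining manipulations are routine.
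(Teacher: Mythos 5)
Your proposal is correct and follows essentially the same route as the paper: (SL1)--(SL4) via Lemma \ref{SRSessemiret}, (SR1)--(SR3) read off from (EC1)/(EC2), (EC4) and membership in $\sha$, the nontrivial half of (SR4) extracted from (EC3) plus (EC4), and the converse inclusion via Lemma \ref{lemant} and Theorem \ref{lemao}. The only difference is that you spell out the converse direction (which the paper dismisses as immediate), and you correctly note that (SR1) matches (EC2) rather than (EC1) as the paper's text loosely states.
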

\begin{proof}
	By Lemma \ref{SRSessemiret}, every element of $Alg_{R4^\star}^\ast$ is a bounded semilattice and, hence, \textbf{(SL1)} to \textbf{(SL4)} are satisfied.
	
	On the other hand, \textbf{(SR1)} is \textbf{(EC1)}, \textbf{(SR2)} holds in $\sha$ and \textbf{(SR3)} is a direct consequence of \textbf{(EC4)}. Finally, by \textbf{(EC3)}, we get that $z \ra x \leq (z \ra y) \ra (z \ra (x \wedge y))$ and, in consequence, by \textbf{(EC4)}, we have that
	\[
	(z \ra x) \wedge (z \ra y) \leq  (z \ra y) \wedge ((z \ra y) \ra (z \ra (x \wedge y)) \leq z \ra (x \wedge y).
	\]
	The other inequality in \textbf{(SR4)} follows straightforwardly.
	
	We have seen that every $\{\ra, \wedge\}$-subreduct of a member of Alg$^\ast_{R4^\star}$ is in $\srs$.
	\vskip.2cm
	
	The other inclusion is immediate. It follows applying Lemma \ref{lemant} and Theorem \ref{lemao}
\end{proof}

\vskip.2cm

Let us now note that, writing 1 as a short hand for $\alpha \ra \alpha$, with $\alpha$ some given formula, and 0 as one for $\neg 1$, we have that, for any formula $\beta$:
\begin{itemize}
	\item[] $\vdash 0 \ra \beta$ \ \  and
	\item[] $\neg \beta \dashv \vdash \beta \ra 0$.
\end{itemize}

Let us see that. Let us first prove that $\vdash 0 \ra \beta$:
\vskip.2cm
\begin{tabular}{lll}
	1. & $0 \ra (1 \ra \beta)$ & \textbf{(N1)} \\
	2. & $(1 \ra \beta) \ra \beta$ & theorem of $R4$ \\
	3. & $0 \ra \beta$ & by \textbf{(IL2)}, from 1. and 2. \\
\end{tabular}
\vskip.2cm
Now, we see that $\neg \beta \vdash \beta \ra 0$:
\vskip.2cm
\begin{tabular}{lll}
	1. & $\neg \beta $ & by hypothesis \\
	2. & $\neg \beta \ra (\beta \ra 0)$ & \textbf{(N1)} \\
	3. & $\beta \ra 0$ & by \textbf{(MP)}, from 1. and 2. \\
\end{tabular}
\vskip.2cm
Let us finally see that $\beta \ra 0 \vdash \neg \beta$:
\vskip.2cm
\begin{tabular}{lll}
	1. & $\beta \ra 0$ & hypothesis \\
	2. & $0 \ra \beta$ & theorem of $R4$\\
	3. & $\beta \ra \neg \beta$ & by \textbf{(IL2)}, from 1. and 2.\\
	4. & $(\beta \ra \neg \beta) \ra \neg \beta$ & \textbf{(N2)} \\
	5. & $\neg \beta$ & by \textbf{(MP)}, from 3. and 4. \\
\end{tabular}
\vskip.2cm
As a straightforward consequence, we get the following lemma.
\begin{lem}
	Let $A \in Alg_{R4^\star}^\ast$. The element $0 := \neg 1$ is the bottom of $A$ and the unary operation $\neg$ is $\neg x = x \ra 0$.
\end{lem}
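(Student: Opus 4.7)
The plan is to lift the propositional-level derivations just presented to algebraic identities in $Alg_{R4^\star}^\ast$ via the completeness theorem (Theorem \ref{completness}), and then use the antisymmetry quasi-identity \textbf{(A)} to collapse pairs of inequalities into equalities.

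For the first claim, the derivation $\vdash 0 \ra \beta$ displayed above is a theorem of $R4^\star$, so by completeness $0 \ra x = 1$ holds in every $A \in Alg_{R4^\star}^\ast$. Thus $0 \leq x$ for every $x \in A$ in the natural order induced by $\ra$, and combined with \textbf{(T)} (which gives $x \leq 1$) this shows that $0 = \neg 1$ is the bottom element of $A$.

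For the second claim, we need both inequalities $\neg x \leq x \ra 0$ and $x \ra 0 \leq \neg x$. The first is immediate: instantiating axiom \textbf{(N1)} with $\beta$ replaced by $0$ gives the theorem $\vdash \neg \alpha \ra (\alpha \ra 0)$, which by completeness becomes $\neg x \ra (x \ra 0) = 1$. The converse inequality requires turning the deduction $\beta \ra 0 \vdash \neg \beta$ into a theorem $\vdash (\beta \ra 0) \ra \neg \beta$. The hypothesis $\beta \ra 0$ has implicational form and the proof displayed above uses only \textbf{(Ax2)}, \textbf{(N1)}, \textbf{(N2)}, previously established theorems and \textbf{(MP)}; in particular it avoids rule \textbf{(T)}, so Corollary \ref{TDparaL_1} applies and yields the desired theorem. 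Completeness converts it into $(x \ra 0) \ra \neg x = 1$, and \textbf{(A)} then gives $\neg x = x \ra 0$.

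The only delicate step is verifying that the deduction of $\neg \beta$ from $\beta \ra 0$ meets the hypotheses of Corollary \ref{TDparaL_1}: the premise must be of the form $\delta \ra \eta$ and the proof must avoid rule \textbf{(T)}. Both conditions are transparent in the derivation given, so this last step is routine rather than technical.
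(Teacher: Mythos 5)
Your proposal is correct and follows essentially the same route as the paper, which simply declares the lemma a ``straightforward consequence'' of the three displayed derivations. Your extra care in upgrading the interderivability $\neg \beta \dashv\vdash \beta \ra 0$ to the provable implications $\vdash \neg\beta \ra (\beta \ra 0)$ (by instantiating \textbf{(N1)}) and $\vdash (\beta \ra 0) \ra \neg\beta$ (via Corollary \ref{TDparaL_1}, whose hypotheses you rightly check) is precisely the step needed to pass from equiderivability to the algebraic identity $\neg x = x \ra 0$ via \textbf{(A)}, a point the paper leaves implicit.
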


On the other hand, proceeding as in the proof of Lemma \ref{SRSessemiret}, it can be seen that operation $\vee$ makes any $A \in Alg_{R4^\star}^\ast$ a join semilattice.

\begin{lem}
	Let $A \in Alg_{R4^\star}^\ast$, then $(A,\vee, 1)$ is a join semilattice, with $x \vee y = \sup\{x, y\}$, where the supremum is taken with respect to the order in $A$ associated to $\ra$.
\end{lem}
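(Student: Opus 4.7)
The plan is to mirror the structure of the proof of Lemma \ref{SRSessemiret}. I would argue that $x \vee y$ is the supremum of $\{x, y\}$ with respect to the natural order $\leq$ associated to $\ra$; once this is established, the standard consequences that $\vee$ is associative, commutative and idempotent follow immediately, so $(A, \vee)$ is a join semilattice, and the role of $1$ is that of the top element of the order, which is already guaranteed by \textbf{(T)}.

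First I would use \textbf{(ED1)} and \textbf{(ED2)}, which state $x \ra (x \vee y) = 1$ and $y \ra (x \vee y) = 1$, to obtain $x \leq x \vee y$ and $y \leq x \vee y$; hence $x \vee y$ is an upper bound of $\{x, y\}$. Next, to see that it is the least upper bound, I would take any $z \in A$ with $x \leq z$ and $y \leq z$, so $x \ra z = y \ra z = 1$. Substituting $x \ra z = 1$ into \textbf{(ED3)} yields $1 \ra ((y \ra z) \ra ((x \vee y) \ra z)) = 1$; the quasi-identity \textbf{(MP)} (which, as noted in the remark following the definition of $Alg_{R4^\star}^\ast$, is interchangeable with \textbf{(A)} in the presence of \textbf{(T)}) then gives $(y \ra z) \ra ((x \vee y) \ra z) = 1$. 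Substituting $y \ra z = 1$ and applying \textbf{(MP)} once more produces $(x \vee y) \ra z = 1$, that is, $x \vee y \leq z$. Therefore $x \vee y = \sup\{x, y\}$.

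I do not anticipate any real obstacle here: the argument is a direct dualisation of the meet-semilattice computation, with the axioms \textbf{(ED1)}--\textbf{(ED3)} playing the roles of \textbf{(EC1)}--\textbf{(EC3)}, and two successive applications of the \textbf{(MP)} quasi-identity handling the step previously carried out by \textbf{(EC4)}. The only thing to double check is that the algebraic properties of $\vee$ (associativity, commutativity, idempotence) are not postulated but rather derived, and this is automatic once $\vee$ has been identified with the binary supremum for a fixed partial order.
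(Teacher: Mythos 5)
Your proof is correct and follows exactly the route the paper intends: the paper only remarks that one should proceed as in the proof of Lemma \ref{SRSessemiret}, and your dualisation via \textbf{(ED1)}--\textbf{(ED3)} is precisely that argument. Your use of the quasi-identity \textbf{(MP)} (equivalently \textbf{(A)} together with \textbf{(T)}) to collapse $1 \ra w = 1$ to $w = 1$ is a sound substitute for the role \textbf{(EC4)} played in the meet case, where no disjunctive analogue of \textbf{(EC4)} is available.
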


Furthermore, axiom scheme \textbf{(Dist)} implies that $(A,\wedge, \vee, 0, 1)$ is a bounded distributive lattice, and hence, every $A \in Alg_{R4^\star}^\ast$ is a subresiduated lattice. That is to say, $Alg_{R4^\star}^\ast = \srl$.
\vskip.3cm 

From a logical point of view, this last fact implies the following completeness result for calculus $R4^\star$.

\begin{cor}
	For every $\Gamma \cup \{\alpha\} \subseteq \mathrm{Fml_{R4^\star}}$, 	
	$\Gamma \vdash \alpha$ if and only if for every $A \in \srl$ and every $h \in \mathrm{Hom}(\mathrm{Fml_{R4^\star}},A)$, $h(\Gamma) \subseteq \{1\}$ implies $h(\alpha) = 1$.
\end{cor}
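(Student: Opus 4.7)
The plan is to combine Theorem \ref{completness} with the identification $Alg_{R4^\star}^\ast = \srl$ that has just been established in the paragraphs preceding the statement. Theorem \ref{completness} gives completeness of $R4^\star$ with respect to $Alg_{R4^\star}^\ast$, so the only thing left to do is to chain together the lemmas that show this class of algebras coincides with the variety of subresiduated lattices.

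More concretely, I would first invoke Lemma \ref{SRSessemiret} to get that every $A \in Alg^\ast_{R4^\star}$ is a bounded meet-semilattice under $\wedge$ (with $\wedge$ giving the infimum with respect to the implication order, and $1$ being the top). Next, I would cite the dual lemma (the one immediately above the corollary) that says $(A,\vee,1)$ is a join-semilattice with $x\vee y=\sup\{x,y\}$; together with the previous point this turns $(A,\wedge,\vee,1)$ into a bounded lattice. The axiom scheme \textbf{(Dist)} then forces distributivity, so the underlying $\{\wedge,\vee,0,1\}$-reduct is a bounded distributive lattice. Using the preceding lemma, I would identify $0 = \neg 1$ as the bottom element and observe that $\neg x = x\ra 0$, so the negation adds no extra structure beyond what is determined by $\ra$ and $0$.

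At this point I would apply Lemma \ref{baseparaSRS} together with the axioms \textbf{(EC1)}--\textbf{(EC4)} and \textbf{(ED1)}--\textbf{(ED3)} to verify that the residuum-like equations \textbf{(A1)}--\textbf{(A6)} of Definition \ref{srl} hold in $A$, obtaining $A \in \srl$. The converse inclusion $\srl \subseteq Alg^\ast_{R4^\star}$ is routine: every subresiduated lattice satisfies all the quasi-identities listed in the definition of $Alg^\ast_{R4^\star}$, since \textbf{(I)}, \textbf{(B)}, \textbf{(S)}, \textbf{(T)}, \textbf{(A)} come from Lemma \ref{BAI} and Remark \ref{sobreS}, while \textbf{(EC1)}--\textbf{(EC4)}, \textbf{(ED1)}--\textbf{(ED3)}, \textbf{(N1)}, \textbf{(N2)} and \textbf{(Dist)} are immediate from the lattice structure and the residuation property. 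Hence $Alg^\ast_{R4^\star} = \srl$.

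Finally I would conclude: for any $\Gamma\cup\{\alpha\}\subseteq \mathrm{Fml}_{R4^\star}$, Theorem \ref{completness} gives $\Gamma \vdash \alpha$ iff $h(\Gamma)\subseteq\{1\}$ implies $h(\alpha)=1$ for every $A\in Alg^\ast_{R4^\star}$ and every homomorphism $h$; substituting $\srl$ for $Alg^\ast_{R4^\star}$ yields the desired statement. There is no real obstacle here since the work was done in the series of lemmas preceding the corollary; the only thing that requires a bit of care is the passage from the quasi-equations defining $Alg^\ast_{R4^\star}$ to the equational axioms \textbf{(A1)}--\textbf{(A6)} of $\srl$, and this has already been carried out in Lemma \ref{baseparaSRS} for the $\{\ra,\wedge\}$-fragment and extended to the full signature via the identifications of $\vee$, $0$ and $\neg$.
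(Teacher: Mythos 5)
Your proposal is correct and follows essentially the same route as the paper: the corollary is obtained by combining Theorem \ref{completness} with the identification $Alg_{R4^\star}^\ast = \srl$, which the paper assembles from Lemma \ref{SRSessemiret}, Lemma \ref{baseparaSRS}, the lemmas identifying $0=\neg 1$, $\neg x = x\ra 0$ and $\vee$ as the supremum, and the observation that \textbf{(Dist)} forces distributivity. Your explicit remark on the converse inclusion $\srl \subseteq Alg^\ast_{R4^\star}$ only makes explicit a step the paper leaves implicit.
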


\subsection{$IR4^\star$ has the finite model property}
\label{sectionFMP}

Let us show in this subsection that the logic $IR4^\star$ has the finite model property. For that, we shall use that every sub-Hilbert algebra, $A$, is embeddable in a subresiduated lattice, which we shall write $\hat{A}$, through the embedding $j: A \to \hat{A}$, developed in Section \ref{subHilbert}.

\begin{lem}\label{srlfinito}
	Let $L$ be a finite bounded distributive lattice and $D$ a bounded sublattice of $L$. The pair $(L, D)$ forms a subresiduated lattice.
\end{lem}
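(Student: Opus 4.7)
The plan is to apply the definition of subresiduated lattice from the introduction directly. Given the finite distributive lattice $L$ and a bounded sublattice $D$, I need to show that for every $a, b \in L$, the set
\[
E_{ab} = \{d \in D \mid d \wedge a \leq b\}
\]
has a maximum element.

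First I would observe that $E_{ab}$ is non-empty, since $0 \in D$ and $0 \wedge a = 0 \leq b$, and that $E_{ab}$ is finite, being a subset of the finite set $D$. The crucial step is to show that $E_{ab}$ is closed under binary joins. Given $d_1, d_2 \in E_{ab}$, I have $d_1 \vee d_2 \in D$ since $D$ is a bounded sublattice of $L$, and using the distributivity of $L$,
\[
(d_1 \vee d_2) \wedge a = (d_1 \wedge a) \vee (d_2 \wedge a) \leq b \vee b = b,
\]
so $d_1 \vee d_2 \in E_{ab}$.

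Since $E_{ab}$ is a finite, non-empty subset of $D$ closed under binary joins, the element $m := \bigvee E_{ab}$ (taken in $L$, which equals the join in $D$ since $D$ is a sublattice) lies in $E_{ab}$ and is by construction the maximum of $E_{ab}$. Hence $(L, D)$ satisfies the defining condition of a subresiduated lattice.

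There is no real obstacle here: the argument is entirely routine given finiteness, distributivity of $L$, and closure of $D$ under joins. One should just take care to note explicitly that distributivity of $L$ is what makes the set $E_{ab}$ join-closed; without distributivity the equality $(d_1 \vee d_2) \wedge a = (d_1 \wedge a) \vee (d_2 \wedge a)$ would fail and the argument would break down. Finiteness of $L$ is used only to guarantee that the join $\bigvee E_{ab}$ exists in $L$, and so the argument in fact works more generally for any (possibly infinite) complete distributive lattice $L$ with a complete sublattice $D$.
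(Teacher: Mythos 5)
Your proof is correct and takes essentially the same approach as the paper: both form $\bigvee E_{ab}$ and use distributivity to check that this join still lies in $E_{ab}$, hence is the maximum. One small caveat on your closing remark: the extension to infinite $L$ requires the infinite distributive law $a \wedge \bigvee S = \bigvee\{a \wedge s : s \in S\}$ (the frame condition), which does not follow from completeness plus ordinary (binary) distributivity, so the claimed generalization as stated is too strong.
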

\begin{proof}
	Take $a, b \in L$. We define $E_{a b} := \{d \in D \ | \ d \wedge a \leq b\}$. Since $0 \wedge a = 0 \leq b$, $E_{a b} \neq \emptyset$. Since $D$ is finite, there exists $u := \bigvee E_{a b}$. Since $L$ is distributive, $a \wedge u = a \wedge \bigvee \{d \ | \ d \in E_{a b}\} = \bigvee \{a \wedge d \ | \ d \in E_{a b}\} \leq b$, because $a \wedge d \leq b$ for every $d \in E_{a b}$. In consequence, $u \in E_{a,b}$, and hence $u = \max E_{a b}$.
\end{proof}

Let $\alpha$ be a formula in the language $\mathcal{L} = \{\ra\}$, and suppose that $\not\vdash_{IR4^\star} \alpha$. Then, $\sha \not\models \alpha = 1$; i.e., there are $A \in \sha$ and a homomorphism $v : Fml_\mathcal{L} \to A$, such that $v(\alpha) \neq 1$.

Let $j: A \to \hat{A}$ be the embedding of $A$ into the subresiduated lattice $\hat{A}$ of Section \ref{subHilbert}. Then, we can extend $v$ to a homomorphism $w : Fml_\mathcal{L} \to \hat{A}$, such that $w(\alpha) \neq 1$ ($w = j \circ v$).

Let us write $Sub(\alpha)$ for the set of subformulas of $\alpha$. By construction, $Sub(\alpha)$ is a finite set. Take $X := w(Sub(\alpha)) = \{w(\beta) \ | \ \beta \in Sub(\alpha)\} \subseteq \hat{A}$. Let $B$ be the bounded sublattice of $\hat{A}$ generated by $X$. As $\hat{A}$ is distributive and $X$ is finite, $B$ is finite. Take $D := B \cap \sq \hat{A} \subseteq B$. By Lemma \ref{srlfinito}, the pair $(B, D)$ defines a subresiduated lattice whose implicative reduct is written $(B, \rab)$.

\begin{lem}\label{lemfmp}
	Let $A$, $\alpha$, $X$ and $B$ be as in previous paragraphs. Then, for $a, b \in X$, if $a \ra b \in X$ then $a \rab b = a \ra b$.
\end{lem}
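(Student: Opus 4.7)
The plan is to compare the two implications by unpacking both as maxima of explicit sets and showing each lies in the other's set.

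First I would recall what each implication means. Since $(\hat A, \ra)$ is a subresiduated lattice with distinguished subset $\sq \hat A$, we have
\[
a \ra b = \max\{d \in \sq \hat A : d \we a \leq b\},
\]
and since $(B,D)$ is the subresiduated lattice given by Lemma \ref{srlfinito}, we have
\[
a \rab b = \max\{d \in D : d \we a \leq b\},
\]
where $D = B \cap \sq \hat A$.

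Next I would show $a \ra b \leq a \rab b$. By hypothesis, $a \ra b \in X \subseteq B$. Moreover, every element of the form $a \ra b$ satisfies $\sq(a\ra b) = a\ra b$, i.e.\ $a \ra b \in \sq \hat A$, by \eqref{sq^2}. Hence $a \ra b \in B \cap \sq \hat A = D$. Since \textbf{(A3)} gives $(a \ra b)\we a \leq b$, we see that $a\ra b$ lies in the set whose maximum is $a \rab b$, so $a \ra b \leq a \rab b$.

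For the reverse inequality, $a \rab b \in D \subseteq \sq \hat A$ and $(a \rab b)\we a \leq b$ in $\hat A$, so $a \rab b$ belongs to the set whose maximum is $a \ra b$, giving $a \rab b \leq a \ra b$. Combining both inequalities yields the equality. I do not foresee a genuine obstacle; the only subtle point is verifying that $a \ra b$ lands in $D$, which boils down to the identity $\sq(a \ra b) = a \ra b$ together with the assumption $a \ra b \in X \subseteq B$.
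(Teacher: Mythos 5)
Your proof is correct and follows essentially the same route as the paper: both directions are obtained by checking that each implication lies in the set whose maximum defines the other, using $D = B \cap \sq\hat{A} \subseteq \sq\hat{A}$ and the identity $\sq(a\ra b)=a\ra b$ to place $a\ra b$ in $D$. The paper's own proof is just a more compressed version of the same comparison of maxima.
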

\begin{proof}
	Since $D \subseteq \sq \hat{A}$, $a \ra b = \max\{d \in \sq \hat{A} \ | \ d \wedge a \leq b\} \geq \max\{d \in D \ | \ d \wedge a \leq b\} = a \rab b$. Since $a \ra b \in D$ and $a \wedge (a \ra b) \leq b$, $a \ra b \leq a \rab b$. Then, $a \ra b = a \rab b$.	
\end{proof}

As a consequence of Lemma \ref{lemfmp}, the unique homomorphism $v': Fml_\mathcal{L} \to B$ such that $v'(p) = w(p)$, on any propositional symbol of the language, satisfies $v'(\alpha) = w(\alpha) = j(v(\alpha)) \neq 1$. We then conclude that $\alpha$ has a finite countermodel. In this way we got a proof of the following result.

\begin{prop}\label{fmpL1}
	The logical system $IR4^\star$ has the finite model property with respect to its algebraic semantics.
\end{prop}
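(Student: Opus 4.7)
The plan is to follow exactly the filtration-style construction laid out in the paragraphs preceding the statement, assembling the lemmas already established into a standard finite-countermodel argument.

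First, I would start from the contrapositive. Suppose $\not\vdash_{IR4^\star} \alpha$. By the completeness of $IR4^\star$ with respect to $\sha$ (Lemma \ref{eqsha} together with Theorem \ref{completness}), there exist $A \in \sha$ and a homomorphism $v : Fml_{\mathcal{L}} \to A$ with $v(\alpha) \neq 1$. Using the embedding $j : A \to \hat{A}$ into a subresiduated lattice constructed in Section \ref{subHilbert} (Theorem \ref{maingoal1}), I would set $w := j \circ v$, so that $w : Fml_{\mathcal{L}} \to \hat{A}$ is a homomorphism into a subresiduated lattice with $w(\alpha) \neq 1$.

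Next, I would carry out the filtration. Let $X := w(\mathrm{Sub}(\alpha))$, which is finite since $\mathrm{Sub}(\alpha)$ is finite, and let $B$ be the bounded sublattice of $\hat{A}$ generated by $X$. Because $\hat{A}$ is distributive and $X$ is finite, $B$ is finite. Setting $D := B \cap \sq \hat{A}$, Lemma \ref{srlfinito} yields that $(B,D)$ is a subresiduated lattice; write $\rab$ for its implication. The crucial compatibility step is Lemma \ref{lemfmp}: whenever $\beta \to \gamma \in \mathrm{Sub}(\alpha)$, one has $w(\beta) \rab w(\gamma) = w(\beta) \to w(\gamma)$ in $\hat{A}$.

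Finally, I would define $v' : Fml_{\mathcal{L}} \to B$ on propositional variables by $v'(p) := w(p)$ for every variable $p$ occurring in $\alpha$ (and arbitrarily elsewhere), extending it as a homomorphism into the $\{\to\}$-reduct of $(B,\rab)$. A straightforward induction on the structure of subformulas of $\alpha$, invoking Lemma \ref{lemfmp} at each implication step to identify $\rab$ with $\to$, shows that $v'(\beta) = w(\beta)$ for every $\beta \in \mathrm{Sub}(\alpha)$. In particular $v'(\alpha) = w(\alpha) \neq 1$, so $\alpha$ is refuted in the finite sub-Hilbert algebra obtained as the $\{\to\}$-reduct of $(B,\rab)$, which completes the argument.

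The only delicate point is the compatibility step already isolated as Lemma \ref{lemfmp}, which is what makes the inductive identification $v' = w$ on $\mathrm{Sub}(\alpha)$ go through; everything else is routine once the subresiduated-lattice envelope $\hat{A}$ from Section \ref{subHilbert} is in hand and one observes that a finite bounded distributive lattice with a chosen bounded sublattice is automatically subresiduated.
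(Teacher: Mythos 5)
Your proposal is correct and follows essentially the same route as the paper: contrapositive via completeness over $\sha$, embedding into the subresiduated lattice $\hat{A}$ via $j$, filtration through the finite bounded sublattice $B$ generated by $w(\mathrm{Sub}(\alpha))$ with $D = B \cap \sq\hat{A}$, and the compatibility Lemma \ref{lemfmp} to propagate $v' = w$ along subformulas. The only difference is that you make explicit the induction on $\mathrm{Sub}(\alpha)$ that the paper leaves implicit when asserting $v'(\alpha) = w(\alpha)$, which is a harmless (indeed welcome) elaboration.
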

\vskip.3cm

We can adapt the proof of Proposition \ref{fmpL1} and use that $A \in \srs$ can be embedded into a subresiduated lattice (as was seen in Section \ref{semireticulos}) in order to prove the following result.

\begin{prop}\label{fmpL1wedge}
	The $\{\ra, \wedge\}$ fragment of the logic $R4^\star$ has the finite model property with respect to its algebraic semantics.
\end{prop}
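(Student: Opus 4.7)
The plan is to mimic the proof of Proposition \ref{fmpL1}, this time replacing the embedding of a sub-Hilbert algebra into a subresiduated lattice (Theorem \ref{maingoal1}) by the embedding of a subresiduated semilattice into a subresiduated lattice supplied by Corollary \ref{corSRS}.

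Suppose that a formula $\alpha$ in the language $\{\ra, \we\}$ is not derivable in the corresponding fragment of $R4^\star$. By Lemma \ref{baseparaSRS} together with the standard completeness theorem for algebraizable logics, there exist $A \in \srs$ and a homomorphism $v : \mathrm{Fml} \to A$ with $v(\alpha) \neq 1$. Let $j : A \to \hat{A}$ be the embedding into the subresiduated lattice $\hat{A} := (\mathrm{Fil}(A)^+, D)$ built in Section \ref{semireticulos}, and set $w := j \circ v$. Then $w$ is a homomorphism of $\{\ra, \we, 1\}$-algebras into the corresponding reduct of $\hat{A}$, and $w(\alpha) \neq 1$ since $j$ is injective.

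Next, consider the finite set $X := w(\mathrm{Sub}(\alpha))$, and let $B$ be the bounded sublattice of $\hat{A}$ generated by $X$; since $\hat{A}$ is distributive and $X$ is finite, $B$ is finite. Setting $D_B := B \cap \sq \hat{A}$, Lemma \ref{srlfinito} yields that $(B, D_B)$ is a finite subresiduated lattice, whose induced implication I denote by $\rab$. Exactly as in Lemma \ref{lemfmp}, for any $a, b \in X$ with $a \ra b \in X$ one has $a \rab b = a \ra b$: the inequality $a \rab b \leq a \ra b$ follows from $D_B \subseteq \sq\hat{A}$, while the reverse holds because $a \ra b \in D_B$ witnesses $(a \ra b) \we a \leq b$ among the candidates for $a \rab b$.

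Finally, let $v' : \mathrm{Fml} \to (B, \we, \rab, 1)$ be the unique homomorphism agreeing with $w$ on the propositional symbols occurring in $\alpha$. By induction on the complexity of subformulas of $\alpha$ one obtains $v'(\beta) = w(\beta)$ for every $\beta \in \mathrm{Sub}(\alpha)$: the conjunction case is immediate because $\we$ in $B$ is the restriction of $\we$ in $\hat{A}$, while the implication case uses the agreement of $\rab$ and $\ra$ on $X$ established above. In particular $v'(\alpha) \neq 1$. Since the $\{\ra, \we\}$-reduct of $(B, \we, \rab, 1)$ is a $\{\ra,\we\}$-subreduct of a subresiduated lattice, it belongs to $\srs$, and it is finite, so it provides the desired finite countermodel. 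The main point—routine given Section \ref{semireticulos}—is the verification that the old and new implications coincide on $X$; no substantively new obstacle arises beyond the argument for Proposition \ref{fmpL1}.
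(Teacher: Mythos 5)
Your proof is correct and is precisely the adaptation the paper has in mind: it reruns the argument of Proposition \ref{fmpL1} using the embedding of Corollary \ref{corSRS} in place of Theorem \ref{maingoal1}, with the only extra check (that $\we$ is preserved because $B$ is a sublattice of $\hat{A}$) handled properly. The paper itself only sketches this, so your write-up is a faithful filling-in of the intended proof rather than a different route.
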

\vskip.2cm

\begin{rem}
	It can easily be shown that, in fact, the whole logical system $R4^\star$ has the finite model property with respect to its algebraic semantics. However, we shall get a proof of this fact in next subsection, together with an analogous result for an slightly weaker calculus $R4^\dag$.
\end{rem}

\subsection{The logic $R4^\dag$}
\label{expansionsofL1}

We now introduce another two classes of algebras of interest for the rest of this section.

\begin{defn}\label{srlbs}
	We say that an algebras $(A, \wedge, \vee, \ra, 1)$ of type (2,2,2,0) belongs to the variety $\srs^\vee$ if $(A, \wedge, \ra, 1)$ is a subresiduated semilattice and $(A, \wedge, \vee)$ is a lattice. Equivalently, if $(A, \wedge, \vee)$ is a lattice with last element $1$ satisfying \textbf{(SR1)}-\textbf{(SR4)} of Definition \ref{defdesrs}.
	
	An algebras $(A, \wedge, \vee, \ra, 0, 1)$ of type (2,2,2,0,0) is said to be a \emph{subresiduated lattice in the broad sense} (srlbs for short) if  $(A, \wedge, \vee, \ra, 1) \in \srs^\vee$ and its underlying lattice has a first element, $0$. We write $\srlbs$ for the variety of subresiduated lattices in the broad sense.
\end{defn}

Note that the variety $\srlbs$ has $\srl$ as a (proper) subvariety. Likewise, note that $A \in \srlbs$ has an underlying lattice structure which is not necessarily distributive. This is the essential difference between this variety and $\srl$.

Define the calculus $R4^\dag$ as that which satisfies all the axiom schemes and rules of $R4^\star$ except \textbf{(Dist)}. 

A straightforward computation shows the following result.

\begin{cor}
	Variety $\srlbs$ is the equivalent algebraic semantics of the calculus $R4^\dag$.
\end{cor}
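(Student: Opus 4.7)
The plan is to replay, almost verbatim, the argument of Subsection~\ref{algebraicsemantics} establishing $Alg_{R4^\star}^\ast=\srl$, deleting every invocation of $\textbf{(Dist)}$. First I would observe that each of the derivations in Subsection~\ref{implicativelogic} showing that $R4^\star$ is Rasiowa-implicative -- the conditions $\textbf{(IL1)}$--$\textbf{(IL5)}$ together with the compatibility clauses $\textbf{(IL3}_\we\textbf{)}$, $\textbf{(IL3}_\vee\textbf{)}$, $\textbf{(IL3}_\neg\textbf{)}$ -- uses only $\textbf{(Ax1)}$--$\textbf{(Ax3)}$, $\textbf{(MP)}$, $\textbf{(T)}$ and the axioms governing the connective under scrutiny. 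So the same derivations go through in $R4^\dag$, and \cite[Theorem~2.9]{F} produces an equivalent algebraic semantics $Alg_{R4^\dag}^\ast$, which is the quasivariety axiomatised by exactly the quasi-identities listed for $Alg_{R4^\star}^\ast$ with $\textbf{(Dist)}$ removed.

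Next I would show $Alg_{R4^\dag}^\ast = \srlbs$ by two inclusions. For $\subseteq$: given $A\in Alg_{R4^\dag}^\ast$, the proof of Lemma~\ref{SRSessemiret} uses only $\textbf{(EC1)}$--$\textbf{(EC4)}$ and the natural order, so it still delivers that $(A,\we,1)$ is a bounded meet-semilattice; a symmetric argument based on $\textbf{(ED1)}$--$\textbf{(ED3)}$ gives $(A,\vee)$ as join-semilattice for the same order; the unchanged derivations produce $0:=\neg 1$ as bottom and $\neg x=x\ra 0$. Hence $(A,\we,\vee,0,1)$ is a bounded lattice, and Lemma~\ref{baseparaSRS} (whose proof nowhere invokes $\textbf{(Dist)}$) supplies $\textbf{(SR1)}$--$\textbf{(SR4)}$, giving $A\in\srlbs$. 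For $\supseteq$: given $A\in\srlbs$, verify the quasi-identities of $Alg_{R4^\dag}^\ast$; the implicative and conjunction ones follow from the $\srs$-part exactly as in the $\srl$-case, the disjunction and negation ones from the lattice structure together with the residuation identity $a\we d\le b \iff d\le a\ra b$ for $d\in\sq A$, none of these arguments needing distributivity.

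The step I expect to be the main obstacle is verifying $\textbf{(ED3)}$, i.e.\ $(x\ra z)\we(y\ra z)\le(x\vee y)\ra z$, in an arbitrary $\srlbs$-algebra. The reverse inequality follows from antimonotony of $\ra$ in its first coordinate, but the displayed inequality unfolds, via residuation and the fact that $(x\ra z)\we(y\ra z)\in\sq A$, to the meet-inequality $\bigl((x\ra z)\we(y\ra z)\bigr)\we(x\vee y)\le z$; in a distributive lattice this is instantaneous, and the delicate point here is to see that it must still be obtainable from $\textbf{(SR1)}$--$\textbf{(SR4)}$ and the bounded lattice axioms alone, essentially because the implication of $\srlbs$ is defined by maximising inside $\sq A$ and $x,y\le x\vee y$ force both $\bigl((x\ra z)\we(y\ra z)\bigr)\we x$ and $\bigl((x\ra z)\we(y\ra z)\bigr)\we y$ to lie below $z$ by $\textbf{(SR3)}$. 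Once this is checked, combining the two inclusions gives $Alg_{R4^\dag}^\ast=\srlbs$, and the algebraizability of $R4^\dag$ relative to $\srlbs$ is an immediate consequence.
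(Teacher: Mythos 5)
Your overall strategy (replay the $R4^\star$ argument with every use of \textbf{(Dist)} deleted) is the one the paper intends, and the first half of it --- algebraizability of $R4^\dag$ and the inclusion $Alg_{R4^\dag}^\ast\subseteq\srlbs$ --- goes through essentially as you describe. The problem is exactly the step you flag as ``the main obstacle'' and then wave through: the verification of \textbf{(ED3)} in an arbitrary member of $\srlbs$. From \textbf{(SR3)} you do get $d\we x\le z$ and $d\we y\le z$ for $d=(x\ra z)\we(y\ra z)$, but the passage from these two inequalities to $d\we(x\vee y)\le z$ is precisely a distributivity-type inference: in a general lattice one only has $(d\we x)\vee(d\we y)\le d\we(x\vee y)$, and nothing in \textbf{(SR1)}--\textbf{(SR4)} bridges the gap, because those axioms impose no compatibility whatsoever between $\vee$ and $\ra$.

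In fact the step is not merely unproved but false for $\srlbs$ as given in Definition \ref{srlbs}, and the paper's own Example \ref{Db} witnesses this: in the algebra $(M,\ra)$ built on the modular nondistributive lattice $M$, with $\sq M=\{0,b,1\}$, one reads off $a\ra 0=c\ra 0=b$ while $(a\vee c)\ra 0=1\ra 0=0$, so $(a\ra 0)\we(c\ra 0)=b\not\le 0=(a\vee c)\ra 0$; equivalently $(a\ra 0)\ra\bigl((c\ra 0)\ra((a\vee c)\ra 0)\bigr)=b\ra(b\ra 0)=b\ra 0=0\ne 1$. Hence this $\srlbs$-algebra fails to validate axiom \textbf{(D3)} of $R4^\dag$, and the inclusion $\srlbs\subseteq Alg_{R4^\dag}^\ast$ cannot be obtained by your route --- indeed it fails outright unless the definition of $\srlbs$ is strengthened by an axiom tying $\vee$ to $\ra$ (for instance the inequality $(x\ra z)\we(y\ra z)\le(x\vee y)\ra z$, or the full \textbf{(A5)}). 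Since the paper offers nothing beyond ``a straightforward computation'' here, this is a point to raise explicitly rather than to assume away.
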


In view of this last result, it seems natural to deeply analyse the variety $\srlbs$. In what follows, we shall see the existence of relevant/interesting examples of srlbs'.

\begin{ex}\label{bv-srlbs}
	Let $L$ be any bounded (not necessarily distributive) lattice. Define on $L$ a binary operation $\ra$ by:
	\begin{equation} \label{b-ra}
		a \ra b =
		\left\{
		\begin{array}{c}
			1, \mathrm{ if }\ a \leq b,\\
			0, \mathrm{ if }\ a \not\leq b\
		\end{array}
		\right.
	\end{equation}
    By reasons which will became clear in next proposition, we call this algebras of type (2,2,2,0,0), 2-subresiduated lattices.
\end{ex}

A straightforward computation shows the following result.

\begin{prop}\label{2srl}
	Let $L$ be any bounded lattice. Consider the algebras $(L, \ra)$ of type (2,2,2,0,0) defined above; i.e., the 2-subresiduated lattices whose underlying lattice is $L$. Then, $(L, \ra)$ satisfies axioms \textbf{(A1)} to \textbf{(A6)} of Definition \ref{srl}.
\end{prop}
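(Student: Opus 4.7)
The plan is to verify each of the six axioms by a direct case analysis, exploiting the fact that the implication $\to$ only takes the two extreme values $0$ and $1$ of the lattice $L$. Since for any $u \in L$ we have $0 \leq u \leq 1$, and since $0, 1$ are absorbing/neutral for $\wedge$ in the usual way, the verifications reduce to checking when a particular comparison $a \leq b$ holds in the underlying order of $L$. Crucially, the proof never needs distributivity, which is exactly why the example lives in $\srlbs$ rather than in $\srl$.

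More concretely, I would proceed axiom by axiom:

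\begin{itemize}
\item[\textbf{(A1)}] Since $x \leq x$ always holds, $x \to x = 1$.

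\item[\textbf{(A2)}] Split on whether $x \leq y$. If $x \not\leq y$, then $x \to y = 0 \leq z \to (x \to y)$ trivially. If $x \leq y$, then $x \to y = 1$, and since $z \leq 1$ we get $z \to 1 = 1$, so the inequality becomes $1 \leq 1$.

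\item[\textbf{(A3)}] Split again on whether $x \leq y$. If yes, then $x \wedge (x \to y) = x \wedge 1 = x \leq y$; if no, then $x \wedge (x \to y) = x \wedge 0 = 0 \leq y$.

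\item[\textbf{(A4)}] Here the equation must be verified. Observe that $z \to (x \wedge y) = 1$ iff $z \leq x \wedge y$ iff ($z \leq x$ and $z \leq y$) iff $(z \to x) = (z \to y) = 1$, and in all other cases both sides equal $0$ (since each $z \to u \in \{0,1\}$ and their meet is just their minimum).

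\item[\textbf{(A5)}] Dually, $(x \vee y) \to z = 1$ iff $x \vee y \leq z$ iff ($x \leq z$ and $y \leq z$) iff $(x \to z) = (y \to z) = 1$; otherwise both sides are $0$.

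\item[\textbf{(A6)}] If $(x \to y) \wedge (y \to z) = 0$, the inequality is trivial. Otherwise both factors equal $1$, so $x \leq y$ and $y \leq z$, whence by transitivity $x \leq z$, i.e., $x \to z = 1$.
\end{itemize}

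There is no real obstacle here: the whole argument is a routine case split, and the only mildly delicate points are (A4) and (A5), which are equalities rather than inequalities. For those I would emphasize that the right-hand sides are meets of elements already lying in the two-element chain $\{0,1\} \subseteq L$, so their value in $L$ coincides with the minimum of two booleans, which matches the behaviour of the left-hand side characterized above. Once this is observed, each equality splits cleanly into a ``$\leq$ in $L$'' case and a ``$\not\leq$ in $L$'' case. Note also that this proof makes no use of joins beyond their existence in (A5), and no use of distributivity anywhere, confirming that such $(L, \to)$ is genuinely an example in $\srlbs$ that is in general outside $\srl$.
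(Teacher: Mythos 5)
Your case analysis is correct and complete; the paper itself omits the argument entirely, stating only that ``a straightforward computation shows'' the result, and your routine verification (with the key observation that all values of $\to$ lie in the sublattice $\{0,1\}$, so the meets in \textbf{(A4)}--\textbf{(A6)} behave as boolean minima) is exactly that computation. Nothing further is needed.
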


\begin{figure}[H]
	\[
	\xymatrix@=.7em{
		& 1 \ar@{-}[ddl] \ar@{-}[dr]& \\
		&                           & c \ar@{-}[dd] \\
		b \ar@{-}[ddr]	&                           &  \\
		&                           & a \ar@{-}[dl] \\
		&0                          &
	}
	\ \ \ \ \ \ \ \ \ \ \ \ \ \ \ \ \ \ \ \ \ \ \ \
	\xymatrix@=.7em{
		& 1 \ar@{-}[ddl] \ar@{-}[ddr] \ar@{-}[dd]& \\
		&                                        &  \\
		a \ar@{-}[ddr]	&  b \ar@{-}[dd]                         &  c \ar@{-}[ddl]\\
		&                                        &  \\
		&0                                       &
	}
	\]
	\caption{The Hasse diagrams of the nonmodular lattice with five elements, $N$ and the modular nondistributive lattice with five elements, $M$.\label{fig1}}
\end{figure}
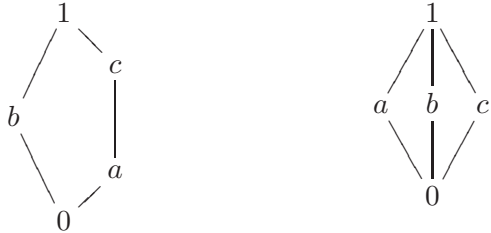

\begin{ex} \label{Db}
		Let $(M, \wedge, \vee, \ra, 0, 1)$ be the algebra of type (2,2,2,0,0) whose underlying lattice structure is that depicted in Figure \ref{fig1} and whose binary operation $\ra$ is presented in the following table.
		\vskip.3cm
\begin{center}		
\begin{tabular}{|c||c|c|c|c|c|}
	\hline
	$\ra$  & 0 & a & b & c & 1 \\
	\hline \hline
	     0 & 1 & 1 & 1 & 1 & 1 \\
	\hline
	     a & b & 1 & b & b & 1 \\
	\hline
	     b & 0 & 0 & 1 & 0 & 1 \\
	\hline
	     c & b & b & b & 1 & 1 \\
	\hline
	     1 & 0 & 0 & b & 0 & 1 \\
	\hline
\end{tabular}
\end{center}	
A direct computation shows that $(M, \ra)$ is a srlbs.
\end{ex}

\begin{ex} \label{Na}
	Let $(N, \wedge, \vee, \ra, 0, 1)$ be the algebra of type (2,2,2,0,0) whose underlying lattice structure is that depicted in Figure \ref{fig1} and whose binary operation $\ra$ is presented in the following table.
		\vskip.3cm
\begin{center}		
	\begin{tabular}{|c||c|c|c|c|c|}
	\hline
	$\ra$ & 0 & a & b & c & 1 \\
	\hline \hline
		0 & 1 & 1 & 1 & 1 & 1 \\
	\hline
		a & 0 & 1 & 0 & 0 & 1 \\
	\hline
		b & a & a & 1 & a & 1 \\
	\hline
		c & 0 & a & 0 & 1 & 1 \\
	\hline
		1 & 0 & a & 0 & a & 1 \\
	\hline
	\end{tabular}
\end{center}	
Another direct computation shows that $(M, \ra)$ is a srlbs.
\end{ex}

The next proposition gives a description of the elements of
$\srlbs$ as pairs $(L, D)$ of a lattice $L$ and an adequate sublattice $D$ of $L$.

\begin{lem}\label{SRSveecomopar}
Let $L$ be a bounded (not necessarily distributive) lattice and $D$ a bounded sublattice of $L$ such that for all $a, b \in L$, the sets $E_{ab} := \{d \in D \ | \ d \wedge a \leq b\}$ have maxima. Then, if we endow $L$ with the binary operation $\ra$, given by $a \ra b := \max E_{ab}$, we have that $(L, \ra) \in \srlbs$.
\end{lem}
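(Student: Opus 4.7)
The plan is to verify the four axioms \textbf{(SR1)}-\textbf{(SR4)} of Definition \ref{defdesrs} directly from the definition $a\ra b := \max E_{ab}$, with $E_{ab} = \{d\in D : d\we a\leq b\}$. Since $(L,\we,\vee,0,1)$ is already a bounded lattice by hypothesis, once these are established, the fact that $(L,\ra)\in\srlbs$ is immediate from Definition \ref{srlbs}. The essential observation to highlight is that the only property of $D$ we need beyond its being a bounded sublattice is that the sets $E_{ab}$ have maxima; crucially, distributivity of $L$ will nowhere be required.

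First I would dispatch \textbf{(SR1)} and \textbf{(SR3)} as direct consequences of the definition: for \textbf{(SR1)}, note $1\in D$ and $1\we(x\we y) = x\we y\leq y$, so $1\in E_{x\we y,y}$, forcing $\max E_{x\we y,y}=1$; for \textbf{(SR3)}, the element $x\ra y = \max E_{xy}$ belongs to $E_{xy}$ by definition, which is exactly $x\we(x\ra y)\leq y$. For \textbf{(SR2)}, set $d = x\ra y$; then $d\in D$ and trivially $d\we z\leq d = x\ra y$, so $d\in E_{z,x\ra y}$, which gives $d\leq z\ra(x\ra y)$.

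The one step requiring a little care is \textbf{(SR4)}, though it remains elementary. For the inequality $z\ra (x\we y)\leq (z\ra x)\we(z\ra y)$, write $d=z\ra(x\we y)$; from $d\we z\leq x\we y$ we get $d\we z\leq x$ and $d\we z\leq y$, so $d$ belongs to both $E_{zx}$ and $E_{zy}$, whence $d\leq z\ra x$ and $d\leq z\ra y$. For the converse inequality, let $d_1 = z\ra x$ and $d_2 = z\ra y$; since $D$ is a sublattice of $L$, $d_1\we d_2\in D$, and in any lattice $(d_1\we d_2)\we z\leq d_1\we z\leq x$ and similarly $\leq y$, so $(d_1\we d_2)\we z\leq x\we y$, showing $d_1\we d_2\in E_{z,x\we y}$ and hence $d_1\we d_2\leq z\ra(x\we y)$.

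I do not expect any genuine obstacle here; the proof is essentially the same argument used in \cite{EH} to move between the pair presentation and the equational presentation of ordinary subresiduated lattices, and a careful reading shows that distributivity of $L$ was only ever needed to guarantee the existence of the maxima, not in verifying the axioms themselves. Since the existence of the maxima is now part of the hypothesis, the proof goes through verbatim in this broader setting, confirming the point of Definition \ref{srlbs} that dropping distributivity is harmless for axioms \textbf{(SR1)}-\textbf{(SR4)}.
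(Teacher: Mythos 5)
Your proposal is correct and follows essentially the same route as the paper: \textbf{(SR1)}--\textbf{(SR3)} read off directly from the definition of $\max E_{ab}$, and \textbf{(SR4)} verified by showing $z\ra(x\we y)$ is below both $z\ra x$ and $z\ra y$ and, using that $D$ is a sublattice, that $(z\ra x)\we(z\ra y)\in E_{z,x\we y}$. Your closing remark that distributivity was only ever needed to guarantee the existence of the maxima matches the point of the paper's construction exactly.
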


\begin{proof}
 Let us check that $(L,\we,\vee,\ra,0,1)$ satisfies the equations defining variety $\srs$. Equations $\mathbf{SL1}$-$\mathbf{SL4}$ hold because $L$ is a bounded lattice.\\
Take $x,y,z\in L$.
\begin{enumerate}
	\item[\textbf{(SR1)}] Since $1\in D$ and $1\we(x\we y)\leq y$, we get $(x\we y)\ra y = 1$.
	\item[\textbf{(SR2)}] Since $(x\ra y)\we z\leq x\ra y$ and $x\ra y\in D$, $x\ra y\leq z\ra(x\ra y)$.
	\item[\textbf{(SR3)}] As $x\ra y = max\{d\in D|d\we x\leq y\}$, it follows that $x\ra y\in\{d\in D|d\we x\leq y\}$ and so $(x\ra y)\we x \leq y$.
	\item[\textbf{(SR4)}] We prove that $z\ra(x\we y)=(z\ra x)\we(z\ra y)$; i.e., that $z\ra(x\we y)=inf\{z\ra x,z\ra y\}$.
	\begin{itemize}
		\item As $z\ra(x\we y)=max\{d\in D|d\we z\leq x\we y\}$, $(z\ra(x\we y))\we z\leq x\we y$ then $(z\ra(x\we y))\we z\leq x$ and $(z\ra(x\we y))\we z\leq y$. In consequence, $(z\ra(x\we y))\we z\leq z\ra x$ and $(z\ra(x\we y))\we z\leq z\ra y$. Hence, $z\ra (x\we y)$ is a lower bound in $D$ of the set $\{z\ra x,z\ra y\}$.
		\item On the other hand, let $t\in D$ be such that $t\leq z\ra x$ and $t\leq z\ra y$. As $(z\ra x)\we z\leq x$ and $(z\ra y)\we z\leq y$, it follows that $t\we z\leq x$ and $t\we z\leq y$. Hence, $t\we z\leq x\we y$ and consequently, $t \leq z\ra(x\we y)$.
	\end{itemize}
\end{enumerate}
\end{proof}

Note that under the hypothesis of Lemma \ref{SRSveecomopar} we have that $D$ is a Heyting algebra and hence a bounded distributive lattice.

Furthermore, a similar argument to that employed in the proof of \cite[Theorem 1]{EH} proves that if $(L, \ra) \in \srlbs$ then $D = \{x \in L \ | \ \Box x = x\}$ is a bounded (distributive) sublattice of $L$ such that for all, $a, b \in L$,  $a \ra b := \max E_{ab}$.
\vskip.3cm

Let us end this subsection analyzing the validity of the finite model property (FMP for short) for logics $R4^\star$ (and hence $R4$) and $R4^\dag$. 
\vskip.2cm 

The proof that $R4^\star$ has the FMP is straightforward. Let $\varphi$ be a formula such that $\not\models \varphi$. Then, there exist $A \in \srl$ and $v : Fml \to A$, a homomorphism from the algebra of formulas such that $v(\varphi) \neq 1$. If $Sub(\varphi)$ is the set of subformulas of $\varphi$, and $X = \{v(\alpha) \ | \ \alpha \in Sub(\varphi)\}$, $X$ is a finite subset of $A$. Take $L$ as the bounded sublattice of $A$ generated by $X$, and $D$ the bounded sublattice of $A$ (and hence of $\sq A$) generated by $\sq X = X \cap \sq A$. We clearly have that $D$ is a bounded sublattice of $L$, which is finite, because $X$ is finite and $A$ is a distributive lattice. Since $L$ is distributive and finite, the pair $(L,D)$ defines a subresiduated lattice $A'$. Write $\rab$ for the implication in $A'$. The following result holds.

\begin{prop}\label{PMFparaR4}
	Let $A$ and $A'$ be as defined in the paragraph above. Then, if $a, b, a \ra b  \in X$, we have that $a \rab b = a \ra b$.
	
	Defining $v': Fml \to A'$ as the unique homomorphism such that $v'(x) = v(x)$ for every variable letter in $Sub(\varphi)$, we get that $v'(\varphi) = v(\varphi) \neq 1$.
\end{prop}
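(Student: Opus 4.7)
The plan is to mirror the argument of Lemma \ref{lemfmp} and extend it to the full signature of $R4^\star$. First I would record that $\sq A$ is itself a bounded sublattice of $A$: by the pair presentation of subresiduated lattices, the set $\{a \in A : 1 \ra a = a\}$ coincides with $\sq A$ and is the distinguished bounded sublattice of $A$, hence it contains $0$ and $1$ and is closed under $\wedge$ and $\vee$. Consequently the bounded sublattice $D$ of $A$ generated by $\sq X = X \cap \sq A$ sits inside $\sq A \cap L$. Since $L$ is a finite bounded distributive lattice, Lemma \ref{srlfinito} applies and $(L, D)$ is a subresiduated lattice whose implication satisfies $a \rab b = \max\{d \in D : d \wedge a \leq b\}$.

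For the first assertion, suppose $a, b, a \ra b \in X$. By equation \eqref{sq^2} we have $a \ra b = \sq(a \ra b) \in \sq A$, so $a \ra b \in X \cap \sq A = \sq X \subseteq D$; together with $\mathbf{(A3)}$, this exhibits $a \ra b$ as an element of the set whose maximum in $D$ defines $a \rab b$, yielding $a \ra b \leq a \rab b$. Conversely, since $a \rab b \in D \subseteq \sq A$ and $(a \rab b) \wedge a \leq b$, the element $a \rab b$ lies in the set whose maximum in $\sq A$ defines $a \ra b$, so $a \rab b \leq a \ra b$. Equality follows.

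For the second assertion I would proceed by induction on the structure of $\alpha \in Sub(\varphi)$, showing $v'(\alpha) = v(\alpha)$. The base case (variables) holds by the definition of $v'$. For $\alpha = \beta \wedge \gamma$ or $\alpha = \beta \vee \gamma$, the inductive step is immediate because $L$ is a sublattice of $A$, so $\wedge$ and $\vee$ in $L$ agree with those in $A$. For $\alpha = \beta \ra \gamma$, the inductive hypothesis combined with the fact that $v(\beta), v(\gamma), v(\beta \ra \gamma)$ all belong to $X$ allows us to invoke the first assertion and conclude $v'(\beta) \rab v'(\gamma) = v(\beta) \ra v(\gamma) = v(\alpha)$. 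The negation case $\alpha = \neg \beta$ is handled after enlarging $X$ to contain $0$ (this preserves finiteness and does not affect the rest of the construction), using that in any subresiduated lattice $\neg x = x \ra 0$. The main obstacle I anticipate is securing the double inclusion $\sq X \subseteq D \subseteq \sq A$ that underpins the two max-comparisons; once that is in place, the induction is routine and yields $v'(\varphi) = v(\varphi) \neq 1$.
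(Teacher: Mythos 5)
Your proof is correct and follows essentially the same route the paper intends: the two-sided max-comparison between $\{d\in D: d\wedge a\leq b\}$ and $\{d\in\sq A: d\wedge a\leq b\}$ is exactly the argument of Lemma \ref{lemfmp}, and the induction on subformulas (with $\wedge,\vee$ handled by $L$ being a sublattice, $\ra$ by the first assertion, and $\neg$ via $\neg x = x\ra 0$) is the routine completion the paper leaves implicit.
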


In the case of $R4^\dag$, we have as obstruction for applying the argument above that, in general, $A \in \srlbs$ is not distributive.
\vskip.2cm
Let as consider, as before, that there exist $A \in \srlbs$ and $v : Fml \to A$, a homomorphism from the algebra of formulas, such that $v(\varphi) \neq 1$. Take $Sub(\varphi)$ as the set of subformulas of $\varphi$. The set $X_0 := \{v(\alpha) \ | \ \alpha \in Sub(\varphi)\}$, $X_0$ is a finite subset of $A$. Take $X_0^\sq = X_0 \cap \sq A \subseteq \sq A$ and $D$, the bounded sublattice of $\sq A$ (and hence of $A$) generated by $X_0^\sq$. Since $X_0^\sq$ is finite and $\sq A$ is distributive, $D$ is finite. Then, $X := X_0 \cup D \subseteq A$ is also finite. Let $A'$ be the $\wedge$-subsemilattice of $A$ generated by $X$. Since the variety of semilattices is locally finite and $X$ is finite, $A'$ is finite and in consequence bounded. Then, $A'$ is a lattice (although not necessarily a sublattice of $A$). Let us write $\underline{\vee}$ for the supremum in $A'$. Besides, $D$ is a bounded and distributive sublattice of $A'$.

For every $a, b \in A'$, let us write $E'_{ab} := \{d \in D \ | \ d \wedge a \leq b\}$. Since $A'$ is finite, $u = \underline{\bigvee}E'_{ab}$ exists. Besides, since $E'_{ab} \subseteq D$, $u = \underline{\bigvee}E'_{ab} = \bigvee E'_{ab}$. Since $D \subseteq \sq A$, $E'_{ab} \subseteq E_{ab}$,
where we recall that $E_{ab}$ is defined as $\{d \in \sq A | \ d \wedge a \leq b\}$. Hence,
\begin{equation}\label{E'ab}
\underline{\bigvee}E'_{ab} = \bigvee E'_{ab} \leq \bigvee E_{ab} = \max E_{ab} = a \ra b
\end{equation}

By equation \eqref{E'ab}, $a \wedge u \leq a \wedge (a \ra b) \leq b$; i.e., $u = \max E'_{ab}$. Define on $A'$ a binary operation $\rab$ by $a \rab b := \max E'_{ab}$. By Lemma \ref{SRSveecomopar}, $(A', \rab) \in \srlbs$.

\begin{prop}\label{FMPparaSRSvee}
Let $A$ and $A'$ be as indicated in previous paragraphs. Then,
\begin{enumerate}
	\item if $a,\ b,\ a \ra b \in A'$, then $a \rab b = a \ra b$ \ \ \ and
	\item if $a,\ b,\ a \vee b \in A'$, then $a \underline{\vee} b = a \vee b$.
\end{enumerate}
\end{prop}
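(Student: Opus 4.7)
The plan is to treat the two items separately. Both reduce to comparing operations of $A'$ with those of $A$, exploiting that $A'$ is a $\wedge$-subsemilattice of $A$ (so the induced orders coincide) and that $D$ is a distributive sublattice of both $A'$ and $A$.

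For part (2) I would argue directly from the universal properties of the two joins. Given $a, b, a \vee b \in A'$, the element $a \vee b$ is an upper bound of $\{a,b\}$ sitting inside $A'$, so minimality of $\underline{\vee}$ yields $a \underline{\vee} b \leq a \vee b$. Conversely, $a \underline{\vee} b \in A' \subseteq A$ is an upper bound of $\{a,b\}$ in $A$, so $a \vee b \leq a \underline{\vee} b$ because $a \vee b$ is the least upper bound in $A$. The two inequalities give equality.

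For part (1) the inequality $a \rab b \leq a \ra b$ is already furnished by the computation \eqref{E'ab} carried out in the paragraph preceding the proposition. For the reverse direction the strategy is to exhibit $a \ra b$ as a member of $E'_{ab} = \{d \in D : d \wedge a \leq b\}$, since then $a \ra b \leq \max E'_{ab} = a \rab b$. The inequality $(a \ra b) \wedge a \leq b$ is axiom $\mathbf{(SR3)}$ applied inside $A$, so the entire proof reduces to the membership claim $a \ra b \in D$.

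The main obstacle is precisely this membership claim. One has $a \ra b \in \sq A$ always, since $\sq(x \ra y) = x \ra y$ holds in every member of $\srlbs$, and by hypothesis $a \ra b \in A'$. The plan is to verify the inclusion $A' \cap \sq A \subseteq D$: write an arbitrary element $c \in A'$ as a meet $c = y \wedge d$ with $y$ a meet of elements from $X_0$ and $d \in D$; then from $c \in \sq A$ one deduces $c = \sq c = \sq y \wedge d$, and a careful analysis of $\sq y$ (observing that whenever $x \in X_0 \cap \sq A$ one has $\sq x = x \in X_0^\sq \subseteq D$) should place $c$ in $D$. This structural step is the delicate technical point; in the intended application to the finite model property it becomes immediate, since the relevant implication $a \ra b$ arises as the value of a subformula of $\varphi$, hence lies in $X_0 \cap \sq A = X_0^\sq \subseteq D$ directly.
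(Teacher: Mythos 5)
Your argument follows the paper's proof essentially verbatim: part (2) via the two universal-property inequalities for the join, and part (1) by combining the inequality \eqref{E'ab} with the claim that $a \ra b$ belongs to $E'_{ab}$. The one step you flag as delicate --- the membership $a \ra b \in D$ --- is precisely the step the paper's own proof asserts without justification (it simply writes that $a \ra b$ lies in $E'_{ab}$), so your treatment is in fact more careful than the source; note however that your proposed general inclusion $A' \cap \sq A \subseteq D$ is doubtful, since an element of $A'$ is a meet of elements of $X_0$ with an element of $D$ and applying $\sq$ produces terms $\sq x_i$ that need not lie in $A'$ at all, whereas your fallback observation --- that in the intended application $a \ra b$ is the value of a subformula and hence lies in $X_0 \cap \sq A = X_0^\sq \subseteq D$ --- is exactly what makes the argument (and the finite model property) go through.
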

\begin{proof}
	As it was already said, $a \rab b = \bigvee E'_{ab} \leq \bigvee E_{ab} = a \ra b$. On the other hand, since $a \ra b \in \bigvee E'_{ab}$, $a \ra b \leq a \rab b$.
	
	Given $a, b \in A'$, $a \underline{\vee} b = \bigwedge\{z \in A' \ | \ a \leq z, b \leq z\} = \min\{z \in A' \ | \ a \leq z, b \leq z\} \geq \min\{z \in A \ | \ a \leq z, b \leq z\} = a \vee b$. Furthermore, $a \vee b \in \{z \in A' \ | \ a \leq z, b \leq z\}$ and, in consequence, $a \underline{\vee} b \leq a \vee b$.	
\end{proof}

As a consequence of Proposition \ref{FMPparaSRSvee}, we get that if we consider the homomorphism $v':Fml \to A'$, induced by $v$, then $v'(\varphi) = v(\varphi) \neq 1$. Hence, there is a finite countermodel for $\varphi$.
\vskip.2cm

Finally, from Propositions \ref{PMFparaR4} and \ref{FMPparaSRSvee}, it follows that $R4^\dag$ has the finite model property.

\subsection{On sub-Hilbert lattices}
\label{otherexpansion}

In \cite{CFMSM} the following class of algebras containing both the varieties $\srl$ and that of Hilbert lattices is described. Here, we give another description of this variety (see Appendix \ref{appen} for details).

\begin{defn}\label{sHS}
	A \textbf{sub-Hilbert lattice} is an algebra $(A, \wedge, \vee, \ra, 1)$ of type (2,2,2,0) such that:
	\begin{enumerate}
		\item $(A, \wedge, \vee, 1)$ is a lattice with last element and
		\item The following identities hold:
		\begin{enumerate}
			\item[\textbf{(SH1)}] $(x \wedge y) \ra y = 1$,
			\item[\textbf{(SH2)}] $x \wedge (x \ra y) \leq y$,
			\item[\textbf{(SH3)}] $x \ra y \leq (y \ra z) \ra (x \ra z)$ \ \ and
			\item[\textbf{(SH4)}] $x \ra (y \ra z) \leq (x \ra y) \ra  (x \ra z)$.
		\end{enumerate}	
	\end{enumerate}
\end{defn}

We say that an algebra $(A, \wedge, \vee, \ra, 0, 1)$ is a \textbf{bounded sub-Hilbert lattice} if it is a sub-Hilbert lattice with first element 0.

Write $\shs$ for the variety of bounded sub-Hilbert lattices.
\vskip.2cm

Note that in any $A \in \shs$, $x \leq y$ if and only if $x \ra y = 1$. Indeed, if $x \ra y = 1$, then $x = x \wedge 1 = x \wedge (x \ra y) \leq y$. On the other hand, if $x \leq y$, then $x \wedge y = x$ and hence, $x \ra y = (x \wedge y) \ra y = 1$.

Besides, note that \textbf{(SH4)} guarantees the monotony of $\ra$ in is second coordinate because if $x \leq y$ (and hence $x \ra y = 1$) then, $1 = z \ra 1 = z \ra (x \ra y) \leq (z \ra x) \ra (z \ra y)$. Hence, $z \ra x \leq z \ra y$.

Finally, note that if $x \leq y$ we get, using \textbf{(SH3)}, that $1 \leq (y \ra z) \ra (x \ra z)$; i.e., 
$y\ra z \leq x\ra z$. Thus, $\ra$ is antimonotone in the first coordinate.
\vskip.2cm

The following useful properties are shared by all the elements of $\shs$.
\begin{lem}
	Let $A \in \shs$ and $x, y, z \in A$. Then, the following inequalities are satisfied in $A$:
	\begin{enumerate}
		\item $x \ra (y \wedge z) \leq x \ra y$,
		\item $(x \ra y) \wedge (y \ra z) \leq x \ra z$ \ \ and
		\item $x \ra y \leq z \ra (x \ra y)$.
	\end{enumerate}
\end{lem}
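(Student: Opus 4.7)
The plan is to use the monotonicity facts already extracted in the paragraphs preceding the statement: $\ra$ is monotone in the second coordinate (a consequence of \textbf{(SH4)}, noted right after Definition \ref{sHS}) and antimonotone in the first coordinate (observed from \textbf{(SH3)} at the end of the same paragraph). Item 1 then falls out immediately: from $y \wedge z \leq y$ and monotonicity in the second coordinate we get $x \ra (y \wedge z) \leq x \ra y$.

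For item 2, I would start from \textbf{(SH3)}, which gives $x \ra y \leq (y \ra z) \ra (x \ra z)$. Meeting both sides with $y \ra z$ and using monotonicity of $\wedge$,
\[
(x \ra y) \wedge (y \ra z) \leq \bigl((y \ra z) \ra (x \ra z)\bigr) \wedge (y \ra z),
\]
and the right-hand side is bounded above by $x \ra z$ by \textbf{(SH2)}, instantiated at $y \ra z$ and $x \ra z$.

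For item 3, the trick is to specialize the third variable of \textbf{(SH3)} to the second. Since $y \ra y = 1$, this yields
\[
x \ra y \leq (y \ra y) \ra (x \ra y) = 1 \ra (x \ra y).
\]
Now antimonotonicity of $\ra$ in the first coordinate, applied to $z \leq 1$, gives $1 \ra (x \ra y) \leq z \ra (x \ra y)$, and composing the two inequalities yields the claim.

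The main obstacle is item 3, since it asserts the closure property $\sq(x \ra y) = x \ra y$ characteristic of subresiduated lattices without the help of either the full Hilbert axiom $x \ra (y \ra x) = 1$ or distributivity of the underlying lattice. The substitution $r := q$ in \textbf{(SH3)} sidesteps this by producing a $1 \ra (\cdot)$ that can then be lowered via antimonotonicity.
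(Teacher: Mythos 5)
Your proof is correct and follows essentially the same route as the paper: item 1 via monotonicity of $\ra$ in the second coordinate, item 2 by meeting \textbf{(SH3)} with $y \ra z$ and applying \textbf{(SH2)}, and item 3 by instantiating the third variable of \textbf{(SH3)} as the second to get $x \ra y \leq 1 \ra (x \ra y)$ and then using antimonotonicity in the first coordinate. (The paper's one-line justification of item 1 says ``antimonotony,'' but the actual mechanism is the second-coordinate monotonicity you invoke.)
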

\begin{proof}
	\item 1. It straightforwardly follows from the antimonotony of $\ra$.
	\item 2. It is a direct consequence of \textbf{(SH3)} and \textbf{(SH2)}.
	\item 3. It follows from \textbf{(SH3)}. More concretely, we have that 
	$(x \ra y) \leq (y \ra y) \ra (x \ra y) = 1 \ra (x \ra y)$.
	Since $1\ra (x\ra y) \leq z\ra (x\ra y)$ then $x\ra y \leq z\ra (x\ra y)$.
\end{proof}

\begin{ex}\label{notSRS}
	Consider the lattice $B_2$ depicted below, seen as a bounded lattice.
	\[
	\xymatrix@=.7em{
		& 1 \ar@{-}[dl] \ar@{-}[dr]&  \\
		b \ar@{-}[dr]	&           & a \ar@{-}[dl]  \\
		&0                          &
	}
	\]
	Let us endow it with the binary operation\footnote{This implication turns $B_2$ into an order Hilbert algebra with supremum and infimum \cite{CSM} and hence into a bounded sub-Hilbert lattice.}
	\[
	x \ra y =
	\left\{
	\begin{array}{c}
		1, \mathrm{ if }\ x \leq y,\\
		y, \mathrm{ if }\ x \not\leq y\
	\end{array}
	\right.
	\]
	
	The algebra $(B_2, \wedge, \vee, \ra, 0, 1)$ is an element of $\shs$ which does not satisfy \textbf{(SR4)}\footnote{We have $a \ra (a \wedge b) \neq (a \ra a) \wedge (a \ra b)$.} of Definition \ref{defdesrs}. Hence, it is not a subresiduated lattice (even in the broad sense). Then, $\shs \supsetneqq \srlbs$.
\end{ex}

\subsection{Weakening $R4^\star$ a little further}
\label{otherexpansion}

From a logical point of view, we can endow the implicative fragment of $R4$ with a ''conjunction'', by means of axioms  $\textbf{(C1)}$, $\textbf{(C2)}$ and $\textbf{(C3)}$, with $\textbf{(C1)}$ to $\textbf{(C3)}$ the usual intuitionistic axioms for conjunction. Alternatively, we could have defined the connective $\wedge$ by means of the axioms $\textbf{(C1)}$, $\textbf{(C2)}$ and a (weaker) rule in place of axiom \textbf{(C3)}.

\begin{defn} \label{defL1+}
	The system $R4^{+}$ has as axiom schemes those of $R4^\dag$, except \textbf{(EC3)} and besides the rules \textbf{(MP)} and \textbf{(T)} of $R4^\dag$, the rule
\[
\dfrac{\delta \ra \alpha, \ \delta \ra \beta}{\delta \ra (\alpha \wedge \beta)}\ \ \mathbf{(C)}
\]
\end{defn}

Clearly, scheme \textbf{(C3)} implies the rule \textbf{(C)}. That's why system $R4^+$ is, in principle, a weakening of system $R4^\dag$. The aim of this subsection is the study of system $R4^+$.

A similar argument to that employed in the case of $R4^\dag$ shows that rule \textbf{(C4)}, is also derivable in $R4^{+}$.
\vskip.3cm

Let us begin checking that system $R4^{+}$ is also implicative and, hence, algebraizable. In order to do that, we check that \textbf{(IL3$_\wedge$)} still holds for $R4^+$.
\vskip.2cm

Let $\alpha$ and $\beta$ be formulas. The following is a proof in $R4^+$.
\vskip.2cm
\begin{tabular}{lll}
	1. & $\alpha \ra \beta$ & by hypothesis \\
	2. & $(\alpha \wedge \delta) \ra \alpha$ & \textbf{(C1)} \\
	3. & $(\alpha \wedge \delta) \ra \delta$ & \textbf{(C2)} \\
	4. & $(\alpha \wedge \delta) \ra \beta$ & from 2. and 1. by \textbf{(IL2)} \\
	5. & $(\alpha \wedge \delta) \ra (\beta \wedge \delta)$ & from 3. and 4. by rule \textbf{(C)}.
\end{tabular}
\vskip.2cm
Then, $R4^{+}$ is algebraizable. Let us call Alg$^\ast_+$ its associated quasivariety. It has as a base of quasi-equations, quasi-equations \textbf{(B)}, \textbf{(I)}, \textbf{(A)}, \textbf{(T)}, \textbf{(S)}, \textbf{(EC1)}, \textbf{(EC2)}, \textbf{(ED1)}, \textbf{(ED2)}, \textbf{(ED3)}, \textbf{(EN1)}, \textbf{(EN2)} and
\begin{enumerate}
	\item[\textbf{(QC)}] If $z \ra x = 1$ and $z \ra y = 1$, then $z \ra (x \wedge y) = 1$.
\end{enumerate}

Note that in this quasivariety $\wedge$ is still the infimum with respect to the order induced by $\ra$. On one hand, we have that $x \wedge y \leq x, y$. On the other, if $z \leq x, y$, then $z \ra x = z \ra y = 1$. By \textbf{(QC)}, we get that $z \ra (x \wedge y)$; i.e., $z \leq x \wedge y$.
\vskip.2cm

Besides, it also holds in Alg$^\ast_+$ 
\begin{enumerate}
	\item[\textbf{(EC4)}] $(x \wedge (x \ra y)) \ra y = 1$.
\end{enumerate} 
It may be alternatively written as $x \wedge (x \ra y) \leq y$.

\begin{thm}
	Quasivarieties $\shs$ and Alg$^\ast_+$ agree.
\end{thm}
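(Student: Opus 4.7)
The plan is to establish the two inclusions $\mathrm{Alg}^\ast_+ \subseteq \shs$ and $\shs \subseteq \mathrm{Alg}^\ast_+$. Since $\shs$ is presented in the signature $\{\wedge,\vee,\ra,0,1\}$ whereas $\mathrm{Alg}^\ast_+$ has a primitive $\neg$, I will silently identify the two types by setting $\neg x := x\ra 0$ when passing from $\shs$ to $\mathrm{Alg}^\ast_+$, and $0 := \neg 1$ in the other direction. Under this convention, the task reduces to verifying that every defining (quasi-)equation of one class holds in the other.

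For $\mathrm{Alg}^\ast_+ \subseteq \shs$, my first step is to recover the bounded lattice structure from the quasi-equations. The constant $1$ is the top by $(T)$, and $0 := \neg 1$ is the bottom thanks to $(EN1)$ applied with $x=1$ together with Lemma \ref{K} on the $\{\ra\}$-reduct (which is a sub-Hilbert algebra). The operation $\wedge$ is the infimum in the induced order by $(EC1), (EC2), (QC)$, and $\vee$ is the supremum by $(ED1), (ED2), (ED3)$: the universal property of the join follows from $(ED3)$ via two applications of modus ponens plus Lemma \ref{K} to collapse $1\ra(1\ra a)$ to $a$. Once $A$ is known to be a bounded lattice, $(SH1)$ is exactly $(EC2)$, $(SH2)$ rephrases the derivable $(EC4)$, and $(SH3), (SH4)$ are $(B), (S)$ read as inequalities.

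For $\shs \subseteq \mathrm{Alg}^\ast_+$, given $A\in\shs$ I will check each quasi-equation of $\mathrm{Alg}^\ast_+$. Using the order characterization $a\le b \iff a\ra b=1$ recorded just after Definition \ref{sHS}, the Hilbert-style schemes $(I), (T), (A), (B), (S)$ translate directly from $(SH1)$, antisymmetry, and $(SH3), (SH4)$; $(EC1), (EC2)$ are lattice trivialities; $(QC)$ is the universal property of $\wedge$ as infimum; and $(ED1), (ED2)$ follow from $\vee$ being the join. Setting $\neg x := x\ra 0$, the negation axioms reduce to routine manipulations: $(EN1)$ uses $0\le y$ together with monotonicity of $\ra$ in the second coordinate (a consequence of $(SH4)$ and $(T)$), while $(EN2)$ follows because $(SH4)$ gives $x\ra(x\ra 0) \le (x\ra x)\ra(x\ra 0) = 1\ra(x\ra 0)$, which is $\le x\ra 0$ by Lemma \ref{K}.

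The main obstacle will be the disjunction scheme $(ED3)$: $(x\ra z) \ra ((y\ra z) \ra ((x\vee y)\ra z)) = 1$. Antimonotonicity of $\ra$ in the first coordinate, extracted from $(SH3)$ together with $x,y\le x\vee y$, readily gives the weaker $(x\vee y)\ra z \le (x\ra z)\wedge(y\ra z)$, but the equational form of $(ED3)$ is strictly stronger because the underlying lattice need not be distributive. My plan is to reduce $(ED3)$ to the auxiliary identity $(x\ra z)\wedge(y\ra z) \le (x\vee y)\ra z$ and then to prove this by invoking the alternative description of $\shs$ developed in Appendix \ref{appen}, in which the distinguished sublattice $\sq A$ exhibits enough residuation with respect to the join. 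Should a direct manipulation resist, a back-up strategy is to embed each $A\in\shs$ into a bounded subresiduated lattice in the broad sense, in the spirit of the constructions of Sections \ref{subHilbert} and \ref{semireticulos}, and to transport $(ED3)$ back from the ambient $\srlbs$-algebra.
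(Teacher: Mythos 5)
Your overall architecture matches the paper's: two inclusions, with the easy direction reading \textbf{(SH1)}--\textbf{(SH4)} off \textbf{(EC1)}/\textbf{(EC2)}, \textbf{(EC4)}, \textbf{(B)} and \textbf{(S)}, and the converse checking the quasi-equational base of Alg$^\ast_+$ one by one. In fact you are more careful than the printed proof on several points: the signature identification $\neg x := x\ra 0$, $0:=\neg 1$, the verification that $\neg 1$ is the bottom, and the derivations of \textbf{(EN1)} and \textbf{(EN2)} from monotonicity and \textbf{(SH4)} plus $\sq w\leq w$ are all correct and are simply not spelled out in the paper.

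The genuine gap is exactly where you place it: \textbf{(ED3)}, i.e.\ $(x\ra z)\ra((y\ra z)\ra((x\vee y)\ra z))=1$, is never actually derived from \textbf{(SH1)}--\textbf{(SH4)}, and neither of your two proposed strategies can be carried out as described. The first appeals to the alternative axiomatization in Appendix \ref{appen}, but the only join axiom there, condition (b), is $(a\vee b)\ra c\leq(a\ra c)\we(b\ra c)$ --- precisely the antimonotonicity direction you already have and correctly identify as insufficient; nothing in the appendix supplies the reverse inequality, let alone the curried form. The second strategy is untenable on its face: a full-signature embedding of an arbitrary $A\in\shs$ into a member of $\srlbs$ would transport back every identity valid in $\srlbs$, in particular $((z\ra x)\we(z\ra y))\ra(z\ra(x\we y))=1$, and Example \ref{notSRS} exhibits an element of $\shs$ (the algebra $B_2$ with the order implication) in which that identity fails. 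So no such embedding exists, and the backup route collapses. To be fair, the paper's own proof of this theorem silently omits \textbf{(ED1)}--\textbf{(ED3)} and \textbf{(EN1)}--\textbf{(EN2)} in the direction $\shs\subseteq$ Alg$^\ast_+$, so you have put your finger on a real weak spot of the published argument; but as it stands your proposal is not a proof until \textbf{(ED3)} is either derived directly from \textbf{(SH1)}--\textbf{(SH4)} (note that all three terms $x\ra z$, $y\ra z$, $(x\vee y)\ra z$ lie in the Hilbert algebra $\sq A$, which is the natural place to look) or shown to fail, in which case the theorem itself would need amendment.
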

\begin{proof}
	Let $A \in$ Alg$^\ast_+$. As we have seen, $(A, \wedge, \vee, 0, 1)$ is a bounded lattice. Equation \textbf{(SH1)} coincides with \textbf{(EC1)} and \textbf{(SH2)} is \textbf{(EC4)}. Equations \textbf{(SH3)} and \textbf{(SH4)} are identities satisfied by any bounded sub-Hilbert algebra and $A$ is one of them. Thus, we get that $A \in \shs$.
	\vskip.2cm
	Conversely, assume that $A \in \shs$. Let us check that the equations and quasi-equations defining Alg$^\ast_+$ hold in $A$.
	\vskip.2cm
	
	First, note that \textbf{(EC1)} is \textbf{(SH1)} and \textbf{(I)} and \textbf{(T)} are consequence of \textbf{(SH1)} together with the facts that $x \wedge x = x$ and $x \wedge 1 = x$. Equation \textbf{(EC2)} follows from \textbf{(SH1)} and the fact that $x \wedge y = y \wedge x$.
	
	The validity of \textbf{(A)} follows from the facts that $A \in \shs$ and $x \leq y$ if and only if $x \ra y = 1$.
	\vskip.2cm
    Equation \textbf{(B)} follows from \textbf{(SH3)}.
    \vskip.2cm

    Equation \textbf{(S)} is equation \textbf{(SH4)}.
    \vskip.2cm

    Finally, assume that $z \ra x = z \ra y = 1$; i.e., that $z \leq x, y$. Since $\wedge$ is the infimum in $A$,  $z \leq x \wedge y$; i.e., \textbf{(QC)} holds in $A$.
\end{proof}

Example \ref{notSRS} shows that $\shs$ does not satisfy the following equation:
\[
((z \ra x) \wedge (z \ra y)) \ra (z \ra (x \wedge y)) = 1,
\]
 meanwhile this equation is satisfied in $\srs$ (and hence in $\srlbs$). Then, due to the soundness and completeness of the logics $R4^+$ and $R4^\dag$ with respect to their algebraic semantics, we obtain the following result.

\begin{cor}
	Systems $R4^+$ and $R4^\dag$ present different logics.
\end{cor}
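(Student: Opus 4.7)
The plan is to exhibit a concrete formula that separates the two calculi, relying on their respective completeness theorems with respect to their algebraic semantics. Concretely, I will use the formula
\[
\varphi \;:=\; ((\delta \ra \alpha) \wedge (\delta \ra \beta)) \ra (\delta \ra (\alpha \wedge \beta))
\]
and argue that $\vdash_{R4^\dag} \varphi$ while $\not\vdash_{R4^+} \varphi$.

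First I would observe that, since axiom \textbf{(C3)} is available in $R4^\dag$, a direct Hilbert-style derivation (applying \textbf{(C3)} together with \textbf{(MP)} and \textbf{(EC4)}-style manipulations) yields $\vdash_{R4^\dag} \varphi$; equivalently, as is already stated in the excerpt just before the corollary, the equation $((z \ra x) \wedge (z \ra y)) \ra (z \ra (x \wedge y)) = 1$ is valid in $\srs$, hence in $\srlbs$, which is the equivalent algebraic semantics of $R4^\dag$. By soundness, every algebra in $\srlbs$ satisfies $\varphi = 1$ under every valuation.

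Next, for the failure in $R4^+$, I would appeal to Example \ref{notSRS}: the algebra $(B_2,\we,\vee,\ra,0,1)$ belongs to $\shs$ and witnesses $a \ra (a \we b) \neq (a \ra a) \we (a \ra b)$. Evaluating the identity $((z \ra x) \wedge (z \ra y)) \ra (z \ra (x \wedge y)) = 1$ at $z := a$, $x := a$, $y := b$ gives a value strictly below $1$, so the identity fails in $B_2$. Since $\shs = \mathrm{Alg}^{\ast}_{+}$ is the equivalent algebraic semantics of $R4^+$, the completeness theorem for $R4^+$ yields $\not\vdash_{R4^+} \varphi$.

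Combining these two facts, $\varphi$ is a theorem of $R4^\dag$ but not of $R4^+$, so the two calculi define different consequence relations. The only mild subtlety — and the part I would double-check — is that the algebra $B_2$ of Example \ref{notSRS} genuinely lies in $\shs$ (which the example asserts) and that the valuation above is well-defined on $B_2$; everything else is an immediate application of the algebraizability of both systems and the completeness theorems already developed in the paper.
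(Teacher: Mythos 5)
Your proof is correct and follows essentially the same route as the paper: the separating equation $((z \ra x) \wedge (z \ra y)) \ra (z \ra (x \wedge y)) = 1$ holds in $\srs$ (hence in $\srlbs$, the semantics of $R4^\dag$) but fails in the algebra $B_2$ of Example \ref{notSRS}, which lies in $\shs = \mathrm{Alg}^\ast_+$, so algebraizability and completeness of both systems give the conclusion. Your explicit evaluation at $z=a$, $x=a$, $y=b$ (yielding $b \ra 0 = 0 \neq 1$) just makes concrete the failure the paper cites from the example.
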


\section*{Acknowledgments}
This work was supported by Consejo Nacional de Investigaciones
Cient\'ificas y T\'ecnicas (CONICET-Argentina), Universidad
Nacional de La Plata [PPID/X047] and by Universidad Nacional de San Juan (Subvention F1191-Cicitca).

\appendix
\section{Sub-Hilbert lattices}
\label{appen}
\vskip.3cm
In \cite{CFMSM} a variety also named sub-Hilbert lattices was introduced. The purpose of this appendix is to show that Definition \ref{sHS} gives an alternative equational bases for the variety of sub-Hilbert lattices, as it was defined in \cite{CFMSM}.

\begin{defn}[Definition 3.3 of \cite{CFMSM}] \label{defshrl}
An algebra $(A,\we,\vee,\ra,1)$ of type $(2,2,2,0)$ is a sub-Hilbert lattice
if $(A,\we,\vee,1)$ is a lattice, $(A,\we,\ra,1)$ is a hemi-implicative semilattice and for every
$a, b, c, d \in A$, the following inequalities are satisfied:
\begin{enumerate}[\normalfont (a)]
\item $a\ra b \leq c\ra (a\ra b)$,
\item $(a\vee b)\ra c \leq (a\ra c)\we (b\ra c)$,
\item $a\ra (b\we c) \leq (a\ra b)\we (a\ra c)$,
\item $d\ra (a\ra (b\ra c)) \leq (d\ra (a\ra b)) \ra (d\ra (a\ra c))$,
\item $\sq a \ra (\sq b \ra \sq c) = \sq b \ra (\sq a\ra \sq c)$,
\item $\sq a\ra (\sq b \ra \sq c) = (\sq a \ra \sq b) \ra (\sq a\ra \sq c)$.
\end{enumerate}

Here $\sq a$ is shothand for $1 \ra a$, as usual.
\end{defn}

Let us now see that (a)-(f) gives an alternative equational bases for $\shs$.
\vskip.3cm

Let $A\in \shs$. First note that $\square(A)$ is a Hilbert algebra. Indeed,
it follows from $\textbf{(SH1)}$ that $1\in \square(A)$. For every $a, b, c\in A$, it
follows from $\textbf{(SH4)}$ that $(a\ra (b\ra c)) \ra ((a\ra b) \ra (a\ra c)) = 1$.
Finally let $a,b \in \square(A)$ such that $a\ra b = b \ra a = 1$. Equation \textbf{(SH2)}
allow us to show that $a = b$. Thus, $\square(A)$ is a Hilbert algebra. Hence, the equations (e) and (f)
are satisfied.
The conditions (b) and (c) are the antimonotony and the monotony of the implication
respectively, which was proved. Condition (a) was also proved. Finally, we will see (d).
Let $a,b,c,d \in A$. By $\textbf{(SH4)}$,
\begin{equation}\label{aeq1}
d\ra ((a\ra b) \ra (a\ra c)) \leq (d \ra (a\ra b)) \ra (d \ra (a\ra c)),
\end{equation}
\begin{equation}\label{aeq2}
a\ra (b\ra c) \leq (a\ra b) \ra (a\ra c).
\end{equation}
It follows from \eqref{aeq2} and (c) that
\begin{equation} \label{aequ3}
d\ra (a\ra (b\ra c)) \leq d\ra ((a\ra b) \ra (a\ra c)).
\end{equation}
Thus, it follows from \eqref{aeq1} and \eqref{aequ3} that
\[
d\ra (a\ra (b\ra c)) \leq (d \ra (a\ra b)) \ra (d \ra (a\ra c)).
\]
Thus, condition (d) is satisfied. Then, $A\in \shrl$.
\vskip.2cm

Conversely, assume that $A$ satisfies (a)-(f) in Definition \ref{defshrl}. Note that for every $a,b\in A$, $\square(a\ra b) = a\ra b$.
Condition $\textbf{(SH1)}$ follows from the fact that $a\leq b$ if and only if $a\ra b = 1$.
Condition $\textbf{(SH2)}$ follows by definition. Equation
$\textbf{(SH4)}$ follows from (d) by considering $d = 1$. Finally we will see $\textbf{(SH4)}$.
Finally we will see $\textbf{(SH3)}$. Let $a,b,c \in A$.
Then $a\ra (b\ra c) \leq (a\ra b)\ra (a\ra c)$. Hence,
\[
(b\ra c) \ra (a\ra (b\ra c)) \leq (b\ra c) \ra ((a\ra b)\ra (a\ra c)).
\]
By (e), we get 
\[
(b\ra c) \ra ((a\ra b)\ra (a\ra c)) = (a\ra b)\ra ((b\ra c) \ra (a\ra c)),
\]
so
\[
(b\ra c) \ra (a\ra (b\ra c)) \leq (a\ra b)\ra ((b\ra c) \ra (a\ra c)).
\]
Also by (e), we have that $(b\ra c) \ra (a\ra (b\ra c)) = 1$, so
\[
a\ra b \leq (b\ra c) \ra (a\ra c).
\]
We have proved $\textbf{(SH3)}$. Therefore, $A\in \shs$.

\end{document}